\crefname{hypothesis}{Hypothesis}{Hypotheses}
\title{Unique continuation for the wave equation based on a discontinuous Galerkin time discretization\thanks{Submitted to the editors DATE.
\funding{This work was funded EPSRC grant EP/V050400/1.}}}
\author{Erik Burman\thanks{Department of Mathematics, University College London, United Kingdom 
  (\email{e.burman@ucl.ac.uk}).}
\and Janosch Preuss\thanks{Department of Mathematics, University College London, United Kingdom 
  (\email{j.preuss@ucl.ac.uk}).}
 }
\newcommand{\bel}{\begin{equation} \label}
\newcommand{\ee}{\end{equation}}
\def\beq{\begin{equation}}
\def\eeq{\end{equation}}
\newcommand{\jump}[1]{\llbracket#1\rrbracket}
\newcommand{\tnorm}[1]{\vert\hspace{-0.3mm}\Vert#1\Vert\hspace{-0.3mm}\vert}
\providecommand{\abs}[1]{\left\lvert#1\right\rvert}
\providecommand{\norm}[1]{\left\lVert#1\right\rVert}
\renewcommand{\leq}{\leqslant}
\renewcommand{\geq}{\geqslant}
\providecommand{\abs}[1]{\left\lvert#1\right\rvert}
\newcommand{\dT}{\mathrm{d}t}
\newcommand{\dX}{\mathrm{d}x}
\newcommand{\dS}{\mathrm{d}S}
\newcommand{\STdom}{Q}
\newcommand{\STdata}{\omega_T}
\newcommand{\SemiDiscrSpace}[1]{ W^{ {#1}}_{h} }
\newcommand{\ProdSemiDiscrSpace}[1]{ \mathcal{W}^{ {#1} }_{h} }
\newcommand{\FullyDiscrSpace}[2]{ W^{ {#1},{#2}}_{h, \Delta t  } }
\newcommand{\FullyDiscrSpaceHat}[2]{ \hat{W}^{ {#1},{#2}}_{h, \Delta t  } }
\newcommand{\ProdFullyDiscrSpace}[2]{ \mathcal{W}^{ {#1},{#2}}_{h, \Delta t  } }
\newcommand{\tangular}[1]{ \llbracket\kern-0.5ex|#1|\kern-0.5ex\rrbracket} 
\newcommand{\drawSquare}{\begin{tikzpicture}
\node[ ] at (0,0) {\textcolor{blue}{\nullfont\pgfuseplotmark{square}}};
\end{tikzpicture} }
\newcommand{\drawCircle}{  
\begin{tikzpicture}
\node[ ] at (0,0) {\textcolor{red}{\nullfont\pgfuseplotmark{o}}};  
\end{tikzpicture} }
\newcommand{\drawTriangle}{\begin{tikzpicture}
\node[ ] at (0,0) {\textcolor{green!70!black}{\nullfont\pgfuseplotmark{triangle}}};
\end{tikzpicture}
}
\begin{document}

\maketitle

% REQUIRED
\begin{abstract}
We consider a stable unique continuation problem for the wave equation where the initial data is lacking and the solution is reconstructed using measurements in some subset of the bulk domain. Typically fairly sophisticated space-time methods have been used in previous work to obtain stable and accurate solutions to this reconstruction problem. Here we propose to solve the problem using a standard discontinuous Galerkin method for the temporal discretization and continuous finite elements for the space discretization. Error estimates are established under a geometric control condition. We also investigate two preconditioning strategies which can be used to solve the arising globally coupled space-time system by means of simple time-stepping procedures. Our numerical experiments test the performance of these strategies and highlight the importance of the geometric control condition for reconstructing the solution beyond the data domain.  
\end{abstract}

% REQUIRED
\begin{keywords}
Unique continuation, data assimilation, wave equation, finite element method, discontinuous Galerkin, preconditioning, geometric control condition
\end{keywords}

% REQUIRED
\begin{MSCcodes}
65M32, 35R30, 35L05, 65F08
\end{MSCcodes}

\section{Introduction}
In this article we consider a Lipschitz-stable unique continuation problem for the wave equation. 
For a bounded domain $\Omega \subset \mathbb{R}^d$, where $d \in \mathbb{N}$, with smooth strictly convex boundary $\partial \Omega$, and $T > 0$ we define the space-time 
cylinder $\STdom = (0,T) \times \Omega$ with lateral boundary $\Sigma := (0,T) \times \partial \Omega$.
Analogously, for a subset $\omega \subset \Omega$ we define the data domain $ \STdata := (0,T) \times \omega$.  
We are now concerned with the following data assimilation problem: 
Find $u:\STdom \rightarrow \mathbb{R}$ such that 
\begin{align}
  (\Box u, u)  &= (0,0) \text{ in } \STdom \times \Sigma,    \label{eq:PDE+lateralBc} \\  
  u  &= u_{\omega} \text{ in }  \STdata, \label{eq:data-constraint} 
\end{align} 
where $\Box u :=\partial_t^2 u - \triangle u$ and $u_{\omega}$ is the known data. Note that only the restriction of 
$u$ to the data set is given, whereas the wave-displacement and velocity at time $t=0$ in $\Omega$ are
unknown.

Problem \cref{eq:PDE+lateralBc}-\cref{eq:data-constraint} enjoys Lipschitz stability if the 
set $ \STdata \subset \STdom$ fulfills the so called geometric control condition (GCC), see \cite{BLR88,BLR92} for a precise definition thereof.
\begin{theorem}[Lipschitz stability]\label{thm:Lipschitz-stab}
Assume that $ \STdata \subset \STdom$ fulfills the GCC.
Then there exists a constant $C > 0$ such that for any $\phi \in H^1(\STdom)$ the following stability estimate holds: 
\bel{eq:Lipschitz-stability}
\norm{\phi}_{L^{\infty}(0,T;L^2(\Omega))} + \norm{\partial_t \phi}_{L^{2}(0,T;H^{-1}(\Omega))} 
	\leq C \left(  \norm{\phi}_{ L^2(\STdata) } + \norm{\phi}_{ L^2(\Sigma)} + \norm{ \Box \phi}_{ H^{-1} ( \STdom ) }     \right).
\ee 
\end{theorem}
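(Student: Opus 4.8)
The plan is to derive the Lipschitz stability estimate from a classical observability inequality for the wave equation under the geometric control condition (GCC), combined with a duality/lifting argument to handle the non-homogeneous source term $\Box\phi$ in $H^{-1}(Q)$. I would not attempt to reprove the microlocal observability result of Bardos--Lebeau--Rauch \cite{BLR88,BLR92} from scratch; rather I would invoke it as the analytic backbone and concentrate on the reduction that packages it into the stated norm on the left and the residual norms on the right.

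First I would treat the homogeneous case. Suppose $\Box\phi = 0$ in $Q$ and $\phi = 0$ on $\Sigma$; then $\phi$ is a genuine (weak) solution of the wave equation, and under the GCC the observability inequality asserts that the full energy at $t=0$ is controlled by the observation on $\STdata$, i.e.\ $\norm{\phi(0,\cdot)}_{L^2(\Omega)}^2 + \norm{\partial_t\phi(0,\cdot)}_{H^{-1}(\Omega)}^2 \leq C\norm{\phi}_{L^2(\STdata)}^2$. By the standard energy identity for the wave equation, the quantity $\norm{\phi}_{L^\infty(0,T;L^2(\Omega))} + \norm{\partial_t\phi}_{L^2(0,T;H^{-1}(\Omega))}$ is equivalent to this initial energy (the low-regularity energy is conserved in $t$ for the free equation), so the left-hand side of \cref{eq:Lipschitz-stability} is bounded by the first term on the right. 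The key step here is verifying that the observability estimate is phrased in exactly the $L^2$--$H^{-1}$ energy pair appearing in the theorem; this follows by transposition from the usual $H^1\times L^2$ observability for regular data.

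Second I would remove the homogeneity assumptions by superposition. Decompose $\phi = \phi_0 + \psi$, where $\phi_0$ solves the free wave equation with the same Cauchy data as $\phi$ and homogeneous lateral condition, and $\psi$ absorbs the inhomogeneities: $\Box\psi = \Box\phi =: f \in H^{-1}(Q)$ together with the boundary trace $\phi|_\Sigma$. For $\psi$ one uses the quantitative well-posedness (a priori estimate) for the inhomogeneous wave equation in the transposition sense, giving
\begin{equation*}
\norm{\psi}_{L^\infty(0,T;L^2(\Omega))} + \norm{\partial_t\psi}_{L^2(0,T;H^{-1}(\Omega))}
\leq C\left(\norm{f}_{H^{-1}(Q)} + \norm{\phi}_{L^2(\Sigma)}\right).
\end{equation*}
Applying the homogeneous observability bound to $\phi_0 = \phi - \psi$ and then reinserting the estimate for $\psi$ (also on the data region, via the trace/restriction $\norm{\psi}_{L^2(\STdata)}\leq \norm{\psi}_{L^\infty(0,T;L^2(\Omega))}$) closes the argument by the triangle inequality and a rearrangement of terms.

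The main obstacle I anticipate is the low regularity: the natural setting for GCC observability is $H^1_0\times L^2$ Cauchy data, whereas \cref{eq:Lipschitz-stability} works one derivative lower and only assumes $\phi\in H^1(Q)$ with no boundary or initial conditions imposed a priori. Making the duality argument rigorous at this regularity -- in particular defining $\partial_t\phi$ in $L^2(0,T;H^{-1}(\Omega))$, justifying the transposition solution $\psi$ with a merely $H^{-1}(Q)$ source, and ensuring the trace $\phi|_\Sigma\in L^2(\Sigma)$ is meaningful -- is the delicate part. I would handle this by a density argument, first establishing the estimate for smooth $\phi$ where all quantities are classical, and then passing to the limit using that every term in \cref{eq:Lipschitz-stability} is continuous with respect to the $H^1(Q)$ norm of $\phi$.
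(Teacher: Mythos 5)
The paper itself does not prove \cref{thm:Lipschitz-stab}: the stated ``proof'' is a citation to \cite[Remark A.5]{BFMO21control}. Your outline---Bardos--Lebeau--Rauch observability under the GCC, shifted down to the $L^2(\Omega)\times H^{-1}(\Omega)$ energy level by duality, combined with superposition and a transposition estimate for the inhomogeneous part---is the standard architecture behind that cited result, and several of your steps are sound as sketched: the low-regularity observability with observation $\norm{\phi}_{L^2(\STdata)}$ does follow from the classical $H^1_0\times L^2$ statement by transposition, conservation of the $L^2\times H^{-1}$ energy gives the equivalence with the left-hand side of \cref{eq:Lipschitz-stability} in the homogeneous case, and your closing density argument legitimately disposes of the trace issues (giving meaning to $\partial_t\phi(0,\cdot)$ and $\phi|_\Sigma$ in the decomposition), since every term in \cref{eq:Lipschitz-stability} is continuous with respect to the $H^1(\STdom)$ norm.

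The genuine gap is the a priori estimate you assert for $\psi$, namely a bound by $\norm{f}_{H^{-1}(\STdom)}$, which you call ``quantitative well-posedness in the transposition sense.'' This is not the standard transposition estimate. Standard duality controls $\psi$ by $\norm{f}_{L^1(0,T;L^2(\Omega))}$ or $\norm{f}_{L^2(0,T;H^{-1}(\Omega))}$, whereas $H^{-1}(\STdom)$ here is the dual of $H^1_0(\STdom)$ (this is exactly how the norm is used in \cref{lem:IP-norm}), which is strictly weaker: it contains sources of the form $\partial_t g$ with $g\in L^2(\STdom)$ that in general belong to no $L^p(0,T;H^{-1}(\Omega))$ space. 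Concretely, in the duality argument the pairing $\langle f, w\rangle$ with the backward dual solution $w$ is only licensed for $w\in H^1_0(\STdom)$, and the backward solution does not vanish at the time slice $t=0$, so uncontrolled endpoint terms appear. Your density step does not repair this: regularizing $\phi$ makes $f=\Box\phi$ smooth, but the estimate must still be expressed in the weak norm $\norm{f}_{H^{-1}(\STdom)}$, so the smooth case is exactly as hard as the general one---the difficulty is the weakness of the right-hand side, not the regularity of $\phi$. What is missing is a genuine mechanism for $H^{-1}(\STdom)$ sources, for instance a representation $f=f_0+\partial_t f_1+\nabla\cdot f_2$ with $L^2(\STdom)$ components whose norms are controlled by $\norm{f}_{H^{-1}(\STdom)}$, absorbing the $\partial_t f_1$ part into a modified velocity variable so that the resulting system has an $L^2$-level source, and then tracking the endpoint and boundary contributions this creates; handling precisely this step is the substance of the argument in the appendix of \cite{BFMO21control} that the paper invokes.
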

\begin{proof}
See \cite[Remark A.5]{BFMO21control}. 
\end{proof} 
Unless otherwise stated, below we assume that the GGC holds for the chosen geometry.

\subsection{Literature review}\label{ssection:lit-review}
Recently, at least five different methods \cite{CM15,BFO20,BFMO21,MM21,DMS23} for the numerical solution of problem \cref{eq:PDE+lateralBc}-\cref{eq:data-constraint} have been proposed. 
Before discussing these approaches let us briefly explain a common principle that drives the design of all numerical methods based on the stability estimate \cref{eq:Lipschitz-stability}. To this end, let $u_h \in H^1(\STdom)$ be some discrete approximation of the solution $u$ of problem \cref{eq:PDE+lateralBc}-\cref{eq:data-constraint}.
The idea is then to apply \Cref{thm:Lipschitz-stab} to $\phi = u - u_h$, which leads to an error estimate provided that the terms on the right hand side of \cref{eq:Lipschitz-stability}, in particular the PDE residual $\norm{ \Box (u - u_h ) }_{ H^{-1} ( \STdom ) }$, can be controlled. Let us distinguish the recently proposed methods based on how they strive to 
achieve this aim.
\begin{itemize}
\item The first method proposed in \cite{CM15} in fact preceeded the stability estimate in \cref{eq:Lipschitz-stability}. Here the discretization was required to fulfill 
$\Box u_h \in L^2(0,T,H^{-1}(\Omega))$ to repect the PDE constraint. To this end, $C^1$-conforming (space-time) elements have been used. Verifying discrete inf-sup stability in 
this setting is an intricate problem. Alternatively, the authors propose a stabilized formulation which is shown to convergence under an additional regularity assumption on 
an auxiliary variable, see \cite[Proposition 2]{CM15}.
\item Subsequently, a class of methods \cite{BFO20,BFMO21} based on a primal-dual stabilized finite element framework introduced in \cite{B13} were proposed. Here, the problem is formulated as a PDE constrained optimization problem and discretized. Then, (weakly)-consistent stabilization terms are added to the discretization which allow to control the PDE residual without having to evaluate dual norms or to use exotic discretization spaces.  
Based on this strategy, in reference \cite{BFO20} a piecewise affine continuous finite element method in space was coupled with backward differences in time to yield a provably convergent 
first order method for problem \cref{eq:PDE+lateralBc}-\cref{eq:data-constraint}. 
We point out that even though backward differences in time were used, this is not a time-stepping method, as the solution of a globally coupled space-time system is required. \par 
Since the method of \cite{BFO20} ultimately leads to space-time systems anyhow, it seems natural to treat problem \cref{eq:PDE+lateralBc}-\cref{eq:data-constraint}  
directly using a full space-time discretization. In fact, such an approach has been considered in reference \cite{BFMO21}. Therein the authors proposed an arbitrary order $H^1(\STdom)$-conforming
stabilized finite element method and derived optimal error estimates. An extension of the method to a closely related control problem has been presented in \cite{BFMO21control}. 
\item Whereas the methods introduced in \cite{BFO20,BFMO21} avoid a direct evaluation of the PDE residual $\norm{ \Box (u - u_h ) }_{ H^{-1} ( \STdom ) }$, in reference \cite{DMS23} a least-squares finite element method has been introduced which minimizes this quantity directly. The authors deal with the arising challenges of having to prove technical inf-sup conditions and the need to replace the negative index Sobolev norms by computable quantities to be able to solve the problem numerically. We remark that the framework introduced in this reference is fairly general which allows the additional treatment of data assimilation for the heat equation and the Cauchy problem for Poisson's equation in this publication.
\item Finally, physics informed neural networks have been proposed in reference \cite{MM21} to solve problem \cref{eq:PDE+lateralBc}-\cref{eq:data-constraint} and similar data assimilation problems. Here, the PDE residual is minimized in the $L^2$ instead of the $H^{-1}$-norm on a set of training points and  estimates on the generalization error of these networks have been derived based on \Cref{thm:Lipschitz-stab}. Let us mention that, even though this approach in its present form can yield accurate results for problems enjoying robust stability, it has recently been shown that it is not suitable yet to solve Helmholtz problems posed in geometrical configurations with poor stability, see reference \cite{N23}.  
\end{itemize}

\subsection{Motivation for and novelty of this paper} 
The methods discussed in \cref{ssection:lit-review} share the common feature of treating problem \cref{eq:PDE+lateralBc}-\cref{eq:data-constraint} in a space-time fashion.
This is reasonable since initial data $u(0,\cdot), u_t(0,\cdot)$ for starting a time-marching procedure is not available. 
Also, in order to apply \Cref{thm:Lipschitz-stab} to the error, one needs to ensure that $u_h$ is in $H^1(\STdom)$, which renders the use of conforming space-time discretizations very natural and appealing.
However, space-time methods also come with some considerable challenges. Applying for example the method from \cite{BFMO21} to a domain $\Omega \subset \mathbb{R}^3$ would 
require to generate a non-trivial (recall that $\partial \Omega$ is smooth) four-dimensional space-time mesh and a corresponding discretization which can be challenging to realize with common finite element software. Whereas this algorithmic challenge may be overcome, solving the arising extremely large and ill-conditioned linear systems appears like 
a more daunting obstacle. Therefore, we want to investigate in this article to what extent problem \cref{eq:PDE+lateralBc}-\cref{eq:data-constraint} is amenable to the solution with computationally less-demanding time-marching methods. \par
To this end, we consider a fairly flexible time discretization provided by a discontinuous Galerkin method (dG), which is very well-established in the literature, see e.g. \cite{HH88,J93,F93}. The dG method is known to allow for time-marching procedures provided that only weak continuity in time is imposed by the variational formulation. The analysis presented in this article shows however, that some stronger constraints which couple the problem globally in time cannot be avoided if one wants to preserve the optimal error estimates from reference \cite{BFMO21}. The contribution of this manuscript is to identify these couplings explictly and then to introduce small perturbations thereof for which time-marching is possible.
These approximate solvers can then be used as preconditioners to solve the original globally-coupled space-time problem iteratively. 
The advantage of this strategy is that it allows to solve problem \cref{eq:PDE+lateralBc}-\cref{eq:data-constraint} using a standard time-marching procedure and that it avoids space-time discretizations. Now we can compute solutions in spatially-three dimensional domains ($d=3$) up to moderate levels of precision. The current limitations of our approach are that the preconditioners lack robustness with respect refinement of the discretization and that adaptive refinement strategies have not been considered yet. 
\par 
We remark that a dG method in time for an ill-posed heat equation has already been analyzed in reference \cite{BDE23}. However, the authors did not make use of the dG in time structure for solving the linear systems. This idea appears novel to us.

\subsection{Structure of this paper}
To show the generality of the approach with respect to time discretisation we consider both a semi-discretisation in space, with time continuous and a fully-discrete formulation with a discontinuous Galerkin approximation in time.
We introduce these methods in \cref{section:discr} and establish some auxiliary results valid for both approaches. The semi-discrete method in which time is still considered to be a continuous variable is analyzed in \cref{section:error-analysis-semi-discrete}, whereas analysis of the fully-discrete method can be found in \cref{section:error-analysis-fully-discrete}. The separate treatment of these two approaches allows us to identify very clearly the mechanism which couples provably convergent approximations of \cref{eq:PDE+lateralBc}-\cref{eq:data-constraint} globally in time at the discrete level. 
Based on these insights, we propose two preconditioning strategies in \cref{section:precond} which can be used for the iterative solution of the arising linear systems. In \cref{section:numexp} we investigate their performance in numerical experiments and explore further aspects which go beyond our analytical results. For instance, we investigate how the numerical method behaves when the GCC (which is crucial for the analysis) is violated. This article is accompanied by a \cref{appendix:space-time-interp} in which interpolation into tensor-product space-time finite element spaces is discussed.

\section{A semi-discrete and a fully-discrete method for unique continuation}\label{section:discr}
In this section we introduce two levels of discretization for problem \cref{eq:PDE+lateralBc}-\cref{eq:data-constraint}.
We start in \cref{ssection:discr-geom} by defining a geometric partition of the the space-time cylinder $Q$ 
into time-slabs, which are tensor products of subintervals of $(0,T)$ with a simplicial mesh covering the spatial 
domain $\Omega$. In \cref{ssection:semi-discrete-method} we then introduce a preliminary method to solve problem \cref{eq:PDE+lateralBc}-\cref{eq:data-constraint}, which 
features an ansatz space that is discrete in the spatial variable yet still continuous in time. 
In \cref{ssection:fully-discrete-method} we drop the latter assumption by introducing a
discontinuous Galerkin time discretization. This requires a slight enrichment of the method introduced in \cref{ssection:semi-discrete-method} to impose sufficient temporal continuity 
by means of the variational formulation. In \cref{ssection:stab-interp} and  \cref{ssection:consistency-res} we establish some theoretical results valid for both discretizations which are required for the error analysis of the respective methods presented in \cref{section:error-analysis-semi-discrete} and \cref{section:error-analysis-fully-discrete}.

\subsection{Discretization of the space-time cylinder}\label{ssection:discr-geom}
\subsubsection{Spatial mesh}
For \cref{thm:Lipschitz-stab} to hold we require that $\Omega$ has a smooth boundary. To account for this in a simple manner in the spatial disretization we use the same strategy as discussed in \cite[Section 4.2]{BFMO21control}. Essentially, we fit the boundary using curved elements and incorporate the boundary conditions weakly using Nitsche's method, similar as in \cite[Theorem 2.1]{Thom07}. \par 
Let $\hat{\mathcal{T}}_h$ be a quasi-uniform triangulation such that $\hat{\Omega}:= \cup_{K \in \hat{\mathcal{T}}_h} K $ contains $\Omega$, i.e.\ $\Omega \subset \hat{\Omega}$. We then set $
\mathcal{T}_h := \{ K \cap \Omega \mid K \in  \hat{\mathcal{T}}_h \}.$
Note that by construction $\Omega = \cup_{K \in \mathcal{T}_h} K$ holds true.
As explained in \cite[Section 4.2]{BFMO21control}, it is possible to construct the family $\{ \hat{\mathcal{T}}_h \mid h > 0 \}$ such that for sufficiently small $h$ the  continuous trace inequality
\bel{ieq:cont-trace-ieq}
\norm{v}_{ [L^2(\partial K)]^d } \leq C \left( h^{-1/2} \norm{v}_{ [L^2(K)]^d } + h^{1/2} \norm{ \nabla v  }_{ [L^2(K)]^d }   \right), \; \forall v \in [H^1(K)]^d, \; K \in \mathcal{T}_h
\ee
holds, where here as in the remainder of this article $C$ denotes a generic constant independent of $h$. We refer to \cite[Appendix B]{BFMO21control} for a proof. 
We will in the following also assume that the triangulation $\mathcal{T}_h$ fits the data domain $\omega$. 
Denote then by $\mathcal{F}_i$ the set of all interior facets of $\mathcal{T}_h$ and let 
\[
V_h^k := \{ u \in H^1(\Omega) \mid u|_K \in \mathcal{P}_k(K) \; \forall K \in \mathcal{T}_h \},
\]
where $\mathcal{P}_k(K)$ denotes the spaces of polynomials of order at most $k \in \mathbb{N}$ on $K$. 
%Further we use the common notation 
%\[
%\jump{\nabla v_h \cdot \mathbf{n}} :=  \nabla v_h|_{K_1} \cdot \mathbf{n}_1 +  \nabla v_h|_{K_2} \cdot \mathbf{n}_2 
%\]
%for the jump of the normal derivative a function $v_h \in V_h^k$ over a common facet $F = K_1 \cap K_2$ of two element $K_i \in \mathcal{T}_h$ with outer normal vector $\mathbf{n}_i$ for $i=1,2$.
\subsubsection{Partition of the time axis} 
We introduce a partition $0 = t_0 < t_1 < \ldots < t_N=T$ of the time axis into subintervals $I_n := (t_n,t_{n+1}), n=0,\ldots,N-1$.
For simplicity all time intervals are assumed to be of equal length $\Delta t = \abs{t_{n+1} - t_n}$.
The discretization of the time axis implies a corresponding partition of the space-time cylinder into time slabs
\bel{eq:def-time-slab}
Q^n := I_n \times \Omega, \; \Sigma^n = I_n \times \Sigma, \; n=0,\ldots,N-1, \quad Q := \cup_{n=0}^{N-1} Q^n, \; \Sigma := \cup_{n=0}^{N-1} \Sigma^n.
\ee
The space-time integrals on the slabs are denoted by
\[ 
(u,v)_{Q^n} := \int\limits_{I_n} \int\limits_{\Omega} u v \; \dX \; \dT, 
\;
a(u,v)_{Q^n} := \! \int\limits_{I_n} \int\limits_{\Omega} \nabla u \cdot \nabla v \; \dX \; \dT,
\;
(u,v)_{ \Sigma^n } = \int\limits_{I_n}  \int\limits_{ \partial \Omega} u v \; \dS \; \dT 
\]
and we set $\norm{v}^2_{Q^n} := (v,v)_{Q^n}$ and $ \norm{v}^2_{\Sigma^n} = (v,v)_{\Sigma^n}$. 
The data domain and integrals thereon are defined similarly
\[
\omega^n := I_n \times \omega, \quad (u,v)_{\omega^n} := \int\limits_{I_n} \int\limits_{\omega} u v \; \dX \; \dT,   \quad \norm{ u }_{ \STdata }^2 = \sum\limits_{n=0}^{N-1}(u,v)_{\omega^n}.
\]

\subsection{A semi-discrete conforming FEM}\label{ssection:semi-discrete-method}
Let us first ignore the partition of the time axis in our ansatz space and define
\bel{eq:def-SemiDiscSpace}
 \SemiDiscrSpace{k} := H^1 \left( 0,T; V_h^k \right).
\ee 
For tuples in the product space $ \ProdSemiDiscrSpace{k} = \SemiDiscrSpace{k} \times \SemiDiscrSpace{k} $ it is convenient to use the notation    
\bel{eq:mixed-vars} 
  \underline{\mathbf{U_h}} := ( \underline{u}_1,\underline{u}_2) \in \ProdSemiDiscrSpace{k}.
 % \mathbf{U} := (u,\partial_t u); 
 \ee
Let us now proceed to define a variational method for solving problem \cref{eq:PDE+lateralBc}-\cref{eq:data-constraint} in these semi-discrete spaces. It is convenient to define this method already in a time-slab wise manner, since we will later use a slightly augmented version thereof once the time variable has been discretized by a discontinuous Galerkin method.
Define the bilinear form representing a mixed formulation of the wave equation as 
%\begin{align}
%\begin{aligned} 
%A[ \underline{\mathbf{U}}_h, \underline{\mathbf{Y}}_h] &= \! \sum\limits_{n=0}^{N-1} \! \left\{ (\partial_t \underline{u}_2, \underline{y}_1)_{Q^n} + a(\underline{u}_1,\underline{y}_1)_{Q^n}  
%    + (\partial_t \underline{u}_1 - \underline{u}_2, \underline{y}_2)_{Q^n}    
%  - ( \nabla \underline{u}_1 \! \cdot \! \mathbf{n}, \underline{y}_1)_{ \Sigma^n } 
%  \right\} \label{eq:bil-form-def-fully-discrete}
%\end{aligned}
%\end{align}
\begin{equation}\label{eq:bil-form-def-fully-discrete}
A[ \underline{\mathbf{U}}_h, \underline{\mathbf{Y}}_h] = \! \sum\limits_{n=0}^{N-1} \! \left\{ (\partial_t \underline{u}_2, \underline{y}_1)_{Q^n} + a(\underline{u}_1,\underline{y}_1)_{Q^n}  
    + (\partial_t \underline{u}_1 - \underline{u}_2, \underline{y}_2)_{Q^n}    
  - ( \nabla \underline{u}_1 \! \cdot \! \mathbf{n}, \underline{y}_1)_{ \Sigma^n } 
  \right\}
\end{equation}
for $\underline{\mathbf{U_h}} \in \ProdSemiDiscrSpace{k}$ and $\underline{\mathbf{Y}}_h \in $ for $\ProdSemiDiscrSpace{k_{\ast}}$ with $k_{\ast} \in \mathbb{N}$. Here $\mathbf{n}$ denotes the outer normal vector of $\Omega$.  
Note that for a sufficiently regular solution $u$ of \cref{eq:PDE+lateralBc} we obtain for $ \mathbf{U} = (u, \partial_t u)$ via integration by parts in space that $A[ \mathbf{U} , \underline{\mathbf{Y}}_h] = 0$.
We will search for approximate solutions of problem \cref{eq:PDE+lateralBc}-\cref{eq:data-constraint} as stationary points of the Lagrangian
\bel{eq:Lagrangian}
\mathcal{L}_h( \underline{\mathbf{U}}_h,\underline{\mathbf{Z}}_h) :=
\frac{1}{2} \norm{ \underline{u}_1 - u_{\omega}}_{\omega_T}^2 +  A[ \underline{\mathbf{U}}_h, \underline{\mathbf{Z}}_h ] + \frac{1}{2} S_h( \underline{\mathbf{U}}_h, \underline{\mathbf{U}}_h)  - \frac{1}{2} S_h^{\ast}( \underline{\mathbf{Z}}_h, \underline{\mathbf{Z}}_h), 
\quad 
\ee
for $(\underline{\mathbf{U}}_h,\underline{\mathbf{Z}}_h) \in \ProdSemiDiscrSpace{k} \times \ProdSemiDiscrSpace{k_{\ast}}$.
Here, the first and second term of the Lagrangian incorporate the data and PDE constraints, respectively. The contributions $ S_h(\cdot,\cdot)$ and $S_h^{\ast}(\cdot,\cdot)$ represent stabilization terms which are defined as follows. We define a spatial continuous interior penalty term $J(\cdot,\cdot)$, which penalizes the jump of the gradient $\jump{ \nabla \underline{u}_1 }$ over interior facets, a Galerkin-least squares term $G(\cdot,\cdot)$ which enforces the PDE at the element level, a term $I_{0}(\cdot,\cdot)$ which ensures that $\underline{u}_2 = \partial_t \underline{u}_1$ and a term for boundary stability:
\begin{align*}
& J( \underline{\mathbf{U}}_h, \underline{\mathbf{W}}_h ) := \sum\limits_{n=0}^{N-1} \int\limits_{I_n} \sum\limits_{F \in \mathcal{F}_i} h  ( \jump{ \nabla \underline{u}_1 } , \jump{ \nabla \underline{w}_1 }  )_{F} \; \dT,  
\\ 
& G( \underline{\mathbf{U}}_h, \underline{\mathbf{W}}_h ) := \sum\limits_{n=0}^{N-1} \int\limits_{I_n} \sum\limits_{K \in \mathcal{T}_h }  h^2 ( \partial_t \underline{u}_2 - \Delta \underline{u}_1 , \partial_t \underline{w}_2 - \Delta \underline{w}_1 )_{K}  \; \dT, \quad   
\\
 & I_0( \underline{\mathbf{U}}_h, \underline{\mathbf{W}}_h ) := \sum\limits_{n=0}^{N-1}(\underline{u}_2 - \partial_t \underline{u}_1, \underline{w}_2 - \partial_t \underline{w}_1)_{Q^n}, 
\quad  R( \underline{\mathbf{U}}_h, \underline{\mathbf{W}}_h ) := \sum\limits_{n=0}^{N-1} h^{-1} ( \underline{u}_1, \underline{w}_1)_{ \Sigma^n }.
\end{align*}
The complete primal stabilization is given as a sum of these terms:
\begin{equation}\label{eq:Def-Sh}
S_{h}( \underline{\mathbf{U}}_h, \underline{\mathbf{W}}_h ) := 
J( \underline{\mathbf{U}}_h, \underline{\mathbf{W}}_h ) 
+ G( \underline{\mathbf{U}}_h, \underline{\mathbf{W}}_h ) 
+ R( \underline{\mathbf{U}}_h, \underline{\mathbf{W}}_h )
+ I_0( \underline{\mathbf{U}}_h, \underline{\mathbf{W}}_h ).
\end{equation}
To stabilize the dual variable $ \underline{\mathbf{Z}}_h$ we will use:
\begin{equation}\label{eq:dual_stab_fully_disc}
S^{\ast}_h( \underline{\mathbf{Y}}_h, \underline{\mathbf{Z}}_h) := 
	 \sum\limits_{n=0}^{N-1} \big\{ (\underline{y}_1,\underline{z}_1)_{Q^n} + a(\underline{y}_1,\underline{z}_1)_{Q^n} + (\underline{y}_2,\underline{z}_2)_{Q^n} + h^{-1} (\underline{y}_1 ,\underline{z}_1 )_{ \Sigma^n }  \big\}. 
\end{equation}

These stabilizations give rise to semi-norms, respectively norms:
\begin{equation}\label{eq:def-h-semi-norms}
\abs{ \underline{\mathbf{U}}_h }_{S_h} := S_h( \underline{\mathbf{U}}_h, \underline{\mathbf{U}}_h)^{1/2}, \quad
\norm{ \underline{\mathbf{Z}}_h }_{S_h^{\ast}} := S_h^{\ast}( \underline{\mathbf{Z}}_h, \underline{\mathbf{Z}}_h)^{1/2}. 
\end{equation}
We will later see that the following expression defines a norm on $\ProdSemiDiscrSpace{k} \times \ProdSemiDiscrSpace{k_{\ast}}$: 
\bel{eq:triple_norm_h_def}
\tnorm{ ( \underline{\mathbf{U}}_h, \underline{\mathbf{Z}}_h) }^2_h  :=  \abs{ \underline{\mathbf{U}}_h }_{S_h}^2 +  \norm{ \underline{u}_1 }^2_{\STdata} +  \norm{ \underline{\mathbf{Z}}_h }_{S_h^{\ast}}^2. 
\ee
The first order optimality conditions characterizing critical points of the Lagrangian take the form: 
Find $ (\underline{\mathbf{U}}_h ,\underline{\mathbf{Z}}_h) \in  \ProdSemiDiscrSpace{k} \times \ProdSemiDiscrSpace{k_{\ast}}  $ such that
\begin{align}\label{eq:optimality_cond_semi}
	(\underline{u}_1,\underline{w}_1)_{\STdata} + A[ \underline{\mathbf{W}}_h, \underline{\mathbf{Z}}_h ] +  S_h(  \underline{\mathbf{U}}_h, \underline{\mathbf{W}}_h ) &= (u_{\omega},\underline{w}_1)_{ \STdata}, \quad \forall \; \underline{\mathbf{W}}_h \in  \ProdSemiDiscrSpace{k}, \nonumber \\ 
A[ \underline{\mathbf{U}}_h, \underline{\mathbf{Y}}_h ] - S_h^{\ast}( \underline{ \mathbf{Y}}_h, \underline{\mathbf{Z}}_h)  &= 0, \quad \forall \;  \underline{\mathbf{Y}}_h \in \ProdSemiDiscrSpace{k_{\ast}}. 
\end{align} 
This can be written in a more compact way: 
Find $ (\underline{\mathbf{U}}_h ,\underline{\mathbf{Z}}_h) \in  \ProdSemiDiscrSpace{k} \times \ProdSemiDiscrSpace{k_{\ast}} $ such that
\begin{equation}\label{eq:opt_compact_semi}
B_h[ ( \underline{\mathbf{U}}_h,\underline{\mathbf{Z}}_h), ( \underline{\mathbf{W}}_h ,\underline{\mathbf{Y}}_h ) ] = (u_{\omega_T}, \underline{w}_1)_{ \STdata},
\end{equation}
where  
\begin{align}\label{eq:complete_bfi_def_semi}
B_h[ ( \underline{\mathbf{U}}_h,\underline{\mathbf{Z}}_h), ( \underline{\mathbf{W}}_h ,\underline{\mathbf{Y}}_h ) ]  := & (\underline{u}_1,\underline{w}_1)_{\STdata} + A[\underline{\mathbf{W}}_h, \underline{\mathbf{Z}}_h ] + S_h( \underline{\mathbf{U}}_h, \underline{\mathbf{W}}_h ) \nonumber \\
	& + A[\underline{\mathbf{U}}_h, \underline{\mathbf{Y}}_h ] - S_h^{\ast}( \underline{\mathbf{Y}}_h, \underline{\mathbf{Z}}_h). 
\end{align}
In \cref{section:error-analysis-semi-discrete} we will show 
that the first component $\underline{u}_1$ of the solution of \cref{eq:complete_bfi_def_semi} converges to the exact solution $u$ of \cref{eq:PDE+lateralBc}-\cref{eq:data-constraint} 
in the norm defined by the left hand side of \cref{eq:Lipschitz-stability} at a rate of $h^k$. Note, however, that the method \cref{eq:complete_bfi_def_semi} is not yet suitable for implementation since the time variable is still assumed to be continuous.

\subsection{A fully-discrete FEM using dG in time}\label{ssection:fully-discrete-method}
To discretize time, let $\mathcal{P}^q(I_n)$ denote the space of polynomials up to degree $q \in \mathbb{N}_0$ on $I_n$ and define a discontinuous (in-time) finite element space: 
\[ \FullyDiscrSpace{k}{q} :=  \otimes_{n=0}^{N-1} \mathcal{P}^q(I_n) \otimes V_h^k,
\quad \ProdFullyDiscrSpace{k}{q} := \FullyDiscrSpace{k}{q} \times \FullyDiscrSpace{k}{q},  
\quad q \in \mathbb{N}_{0}, \; k \in \mathbb{N}. \]
Note that functions in $ \ProdFullyDiscrSpace{k}{q} $ are allowed to be discontinuous between time-slabs.
Hence, we require a notation for jumps in time. 
Let
\bel{eq:def-jump-in-time}
v^{n}_{\pm}(x) := \lim_{s \rightarrow 0^{+}} v(x,t_n \pm s), \text{ and } \jump{ v^n } := v_{+}^n - v_{-}^n.
\ee 
Since continuity in time is now no longer ensured by the spaces, we will need to augment the semi-discrete Lagrangian $\mathcal{L}_h(\cdot,\cdot)$ from \cref{eq:Lagrangian} by a stabilization term $S^{\uparrow \downarrow}_{\Delta t}$ which imposes sufficient temporal regularity of the primal variable, i.e.\ 
\bel{eq:Lagrangian_fully_discrete}
\mathcal{L}( \underline{\mathbf{U}}_h,\underline{\mathbf{Z}}_h) :=
\mathcal{L}_h( \underline{\mathbf{U}}_h,\underline{\mathbf{Z}}_h) +  
\frac{1}{2} S^{\uparrow \downarrow}_{\Delta t}( \underline{\mathbf{U}}_h, \underline{\mathbf{U}}_h ), 
\qquad 
(\underline{\mathbf{U}}_h,\underline{\mathbf{Z}}_h) \in \ProdFullyDiscrSpace{k}{q} \times \ProdFullyDiscrSpace{k_{\ast}}{q_{\ast}},
\ee
where
\begin{equation}\label{eq:def-jump-stab}
S^{\uparrow \downarrow}_{\Delta t}( \underline{\mathbf{U}}_h, \underline{\mathbf{W}}_h ) 
:= \underline{I}_1^{\uparrow \downarrow}( \underline{\mathbf{U}}_h, \underline{\mathbf{W}}_h ) + 
\underline{I}_2^{\uparrow \downarrow}( \underline{\mathbf{U}}_h, \underline{\mathbf{W}}_h ) 
\end{equation} 
\begin{align*}
\underline{I}_1^{\uparrow \downarrow}( \underline{\mathbf{U}}_h, \underline{\mathbf{W}}_h ) &:= \sum\limits_{n=1}^{N-1} \left\{ \frac{1}{\Delta t} ( \jump{ \underline{u}_1^n } , \jump{ \underline{w}_1^n } )_{ \Omega} 
	+ \Delta t ( \jump{ \nabla \underline{u}_1^n } , \jump{ \nabla \underline{w}_1^n } )_{ \Omega} \right\}, \\
\underline{I}_2^{\uparrow \downarrow}( \underline{\mathbf{U}}_h, \underline{\mathbf{W}}_h ) &:= \sum\limits_{n=1}^{N-1} \frac{1}{\Delta t} ( \jump{ \underline{u}_2^n } , \jump{ \underline{w}_2^n } )_{ \Omega}.
\end{align*}
The first order optimality conditions then take the form:
Find $(\underline{\mathbf{U}}_h, \underline{\mathbf{Z}}_h) \in \ProdFullyDiscrSpace{k}{q} \times \ProdFullyDiscrSpace{ k_{\ast} }{ q_{\ast} } $ such that
\bel{eq:opt_compact_fully_disc} 
B[ ( \underline{\mathbf{U}}_h,\underline{\mathbf{Z}}_h), ( \underline{\mathbf{W}}_h ,\underline{\mathbf{Y}}_h ) ] = (u_{\omega}, \underline{w}_1)_{\STdata }
\ee 
for all $( \underline{\mathbf{W}}_h ,\underline{\mathbf{Y}}_h ) \in \ProdFullyDiscrSpace{k}{q} \times \ProdFullyDiscrSpace{ k_{\ast} }{ q_{\ast} } $, 
where 
\begin{equation}\label{eq:complete_bfi_def_fully_discr}
B[ ( \underline{\mathbf{U}}_h,\underline{\mathbf{Z}}_h), ( \underline{\mathbf{W}}_h ,\underline{\mathbf{Y}}_h ) ] =  B_h[ ( \underline{\mathbf{U}}_h,\underline{\mathbf{Z}}_h), ( \underline{\mathbf{W}}_h ,\underline{\mathbf{Y}}_h ) ] 
+ S^{\uparrow \downarrow}_{\Delta t}( \underline{\mathbf{U}}_h, \underline{\mathbf{W}}_h )  
\end{equation}
with $B_h$ as defined in \cref{eq:opt_compact_semi} and we assume that $k,k_{\ast},q \geq 1$ and $q_{\ast} \geq 0$.   \par 
Since $\tnorm{(\cdot,\cdot)}_h$ no longer defines a norm on the fully discrete spaces $\ProdFullyDiscrSpace{k}{q} \times \ProdFullyDiscrSpace{ k_{\ast} }{ q_{\ast} } $, it has to be augmeneted as well. To this end, we define 
\[
\tnorm{ (\underline{\mathbf{U}}_h, \underline{\mathbf{Z}}_h) }^2  :=  \tnorm{ (\underline{\mathbf{U}}_h, \underline{\mathbf{Z}}_h) }^2_h +\abs{ \underline{\mathbf{U}}_h   }_{ \uparrow \downarrow }^2, \qquad 
\abs{ \underline{\mathbf{U}}_h   }_{ \uparrow \downarrow } := S^{\uparrow \downarrow}_{\Delta t}( \underline{\mathbf{U}}_h, \underline{\mathbf{U}}_h )^{1/2}.
\]

\subsection{Stability, continuity and interpolation results}\label{ssection:stab-interp}
In this subsection we derive some important auxiliary results which are later used in the error analysis of the semi-discrete and the fully-discrete method. The first aim is to establish inf-sup stability of the bilinear forms $B_h$ and $B$ on the respective spaces. First we have to verify that the expression $\tnorm{(\cdot,\cdot)}_h$, respectively $\tnorm{(\cdot,\cdot)}$, are indeed norms. This is established in the following lemma and its corollary.
\begin{lemma}\label{lem:IP-norm}
\begin{enumerate}[label=(\alph*)]
\item For $\underline{\mathbf{U}}_h \in \ProdSemiDiscrSpace{k} + \ProdFullyDiscrSpace{k}{q}$
and $y \in H^1_0(\STdom)$ it holds that:
\begin{align}
& \int\limits_{\STdom} \left\{ -(\partial_t \underline{u}_1)  \partial_t y + \nabla \underline{u}_1 \nabla y \right\} 
 =  \sum\limits_{n=0}^{N-1} \int\limits_{I_n} \sum\limits_{K \in \mathcal{T}_h }  ( \partial_t \underline{u}_2 - \Delta \underline{u}_1 , y )_{K} \; \dT \label{eq:IP-H-1} \\ 
& + \sum\limits_{n=0}^{N-1} ( \underline{u}_2 - \partial_t u_1,  \partial_t y )_{Q^n}  + \sum\limits_{n=0}^{N-1} \int\limits_{I_n} \sum\limits_{F \in \mathcal{F}_i}  ( \jump{ \nabla \underline{u}_1 \cdot \mathbf{n} } , y  )_{F} \; \dT 
	+ \sum\limits_{n=1}^{N-1}(  \jump{ \underline{u}_2^n } , y )_{ \Omega}, \nonumber \end{align}
 where $ \jump{ \nabla \underline{u}_1 \cdot \mathbf{n} } $ denotes the jump of the normal derivative over interior facets.
\item If for $(\underline{\mathbf{U}}_h,\underline{\mathbf{Z}}_h) \in (\ProdSemiDiscrSpace{k} + \ProdFullyDiscrSpace{k}{q}) \times  (\ProdSemiDiscrSpace{k_{\ast} } + \ProdFullyDiscrSpace{ k_{\ast} }{ q_{\ast} }) $ it holds $\tnorm{ (\mathbf{U}_h, \mathbf{Z}_h) }_h = 0$, then  
\begin{align}
& \underline{\mathbf{Z}}_h = 0, \quad \underline{u}_1|_{\Sigma} = 0, \quad \underline{u}_1|_{ \STdata } = 0, \quad \partial_t \underline{u}_1 = \underline{u}_2 \label{eq:h-norm-zero-impl} \\& \text{ and } \norm{\Box \underline{u}_1}_{ H^{-1}(\STdom)} = \sup_{\substack{  y \in H^1_0(\STdom), \\ \norm{y}_{H^1(\STdom) } = 1  }} \sum\limits_{n=1}^{N-1}(  \jump{ \underline{u}_2^n } , y )_{ \Omega}.
  \label{eq:h-norm-zero-res-H-1} 
\end{align}
\end{enumerate}
\end{lemma}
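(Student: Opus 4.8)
The plan is to read part (a) as an integration-by-parts identity and then obtain part (b) as a bookkeeping consequence of it. For part (a), I first recognize the left-hand side as the $H^{-1}(\STdom)$--$H^1_0(\STdom)$ duality pairing $\langle \Box \underline{u}_1, y\rangle$: since $y$ has vanishing trace on all of $\partial\STdom$, one has $\langle \partial_t^2 \underline{u}_1, y\rangle = -\int_{\STdom} \partial_t \underline{u}_1\,\partial_t y$ and $\langle -\Delta \underline{u}_1, y\rangle = \int_{\STdom} \nabla \underline{u}_1\cdot\nabla y$. I then write $\partial_t \underline{u}_1 = \underline{u}_2 + (\partial_t \underline{u}_1 - \underline{u}_2)$ to bring in the mixed variable, keeping the defect $(\partial_t \underline{u}_1 - \underline{u}_2)$ paired against $\partial_t y$ as the second term on the right-hand side of \cref{eq:IP-H-1}.

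Next I would integrate the spatial contribution by parts element by element within each slab. As $\underline{u}_1|_K$ is polynomial in space, $\Delta \underline{u}_1$ is well defined on each $K$; summing the element-boundary contributions produces the interior-facet terms $(\jump{\nabla\underline{u}_1\cdot\mathbf{n}}, y)_F$, while the contribution on $\partial\Omega$ drops because $y|_\Sigma = 0$. I would then integrate the remaining term $-\int_{\STdom}\underline{u}_2\,\partial_t y$ by parts in time, slab by slab: the temporal endpoint terms telescope across the interior nodes $t_1,\dots,t_{N-1}$ into the jumps $(\jump{\underline{u}_2^n}, y)_\Omega$, and the contributions at $t_0$ and $t_N$ vanish since $y(0,\cdot) = y(T,\cdot) = 0$. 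Grouping the element residuals of the two steps into $\partial_t\underline{u}_2 - \Delta\underline{u}_1$ yields exactly \cref{eq:IP-H-1}. The point requiring care is the sign and telescoping bookkeeping at the interior time nodes, together with the observation that no $\jump{\underline{u}_1^n}$ terms appear -- this is precisely because the second time derivative is never formed on $\underline{u}_1$ directly but routed through $\underline{u}_2$, so only $\underline{u}_2$ is differentiated across slab interfaces. I would also remark that every term on the right is a bounded functional of $y \in H^1_0(\STdom)$ (the facet and nodal terms via trace inequalities), which is what makes the pairing, and hence $\Box\underline{u}_1 \in H^{-1}(\STdom)$, well defined on the possibly time-discontinuous discrete spaces.

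For part (b), assuming $\tnorm{(\underline{\mathbf{U}}_h,\underline{\mathbf{Z}}_h)}_h = 0$, I split the squared norm \cref{eq:triple_norm_h_def} into its nonnegative constituents and conclude that each vanishes. From $\norm{\underline{\mathbf{Z}}_h}_{S_h^{\ast}} = 0$ the definite $L^2$ contributions in \cref{eq:dual_stab_fully_disc} force $\underline{\mathbf{Z}}_h = 0$; from $\norm{\underline{u}_1}_{\STdata} = 0$ I read off $\underline{u}_1|_{\STdata} = 0$; and from $\abs{\underline{\mathbf{U}}_h}_{S_h} = 0$ the four pieces of \cref{eq:Def-Sh} give respectively $\underline{u}_1|_\Sigma = 0$ (from $R$), $\partial_t\underline{u}_1 = \underline{u}_2$ (from $I_0$), $\jump{\nabla\underline{u}_1} = 0$ on all interior facets (from $J$), and $\partial_t\underline{u}_2 - \Delta\underline{u}_1 = 0$ elementwise (from $G$). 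This gives \cref{eq:h-norm-zero-impl}.

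Finally, to obtain \cref{eq:h-norm-zero-res-H-1} I substitute these conclusions into the identity \cref{eq:IP-H-1}: the element-residual term vanishes since $\partial_t\underline{u}_2 = \Delta\underline{u}_1$ on each $K$, the defect term vanishes since $\underline{u}_2 = \partial_t\underline{u}_1$, and the facet term vanishes since $\jump{\nabla\underline{u}_1} = 0$ forces $\jump{\nabla\underline{u}_1\cdot\mathbf{n}} = 0$. Only the nodal term survives, so $\langle\Box\underline{u}_1, y\rangle = \sum_{n=1}^{N-1}(\jump{\underline{u}_2^n}, y)_\Omega$ for all $y \in H^1_0(\STdom)$; taking the supremum over $\norm{y}_{H^1(\STdom)} = 1$ yields the claimed expression for $\norm{\Box\underline{u}_1}_{H^{-1}(\STdom)}$. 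I expect part (a) -- the sign-correct integration by parts with its interface bookkeeping -- to be the main obstacle, after which part (b) is a direct reading-off plus a single substitution.
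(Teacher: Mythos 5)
Your proposal is correct and follows essentially the same route as the paper, whose proof of (a) is simply ``integration by parts'' and whose proof of (b) reads the vanishing of each constituent of $\tnorm{(\cdot,\cdot)}_h$ off the definitions \cref{eq:Def-Sh}--\cref{eq:triple_norm_h_def} and then substitutes into \cref{eq:IP-H-1} so that only the $\jump{\underline{u}_2^n}$ term survives. Your elementwise spatial integration by parts, slabwise temporal integration by parts with telescoping of the $\underline{u}_2$ endpoint terms at interior nodes, and the observation that the vanishing endpoint contributions use $y(0,\cdot)=y(T,\cdot)=0$ are exactly the details the paper leaves implicit.
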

\begin{proof}
\begin{enumerate}[label=(\alph*)]
\item This follows immediately from integration by parts. 
\item In view of definitions \cref{eq:Def-Sh}-\cref{eq:triple_norm_h_def}, the claims in \cref{eq:h-norm-zero-impl} follow directly from $\tnorm{ (\underline{\mathbf{U}}_h, \underline{\mathbf{Z}}_h) }_h = 0$. The claim in \cref{eq:h-norm-zero-res-H-1} is a consequence of \cref{eq:IP-H-1}, since by definition 
\[
\norm{\Box \underline{u}_1}_{ H^{-1}(\STdom)} = \sup_{\substack{  y \in H^1_0(\STdom), \\ \norm{y}_{H^1(\STdom) } = 1  }}  \int\limits_{\STdom} \left\{ -(\partial_t \underline{u}_1)  \partial_t y + \nabla \underline{u}_1 \nabla y \right\} 
\]
and all terms on the right hand side of \cref{eq:IP-H-1}, except for the jump over the time slice boundaries, vanish due to $\abs{ \underline{\mathbf{U}}_h }_{S_h} = 0$.
\end{enumerate}
\end{proof}
\begin{corollary}\label{cor:norm}
\begin{enumerate}[label=(\alph*)]
\item It holds that $\tnorm{ (\cdot, \cdot) }_h$ is a norm on  $ \ProdSemiDiscrSpace{k} \times \ProdSemiDiscrSpace{k_{\ast}} $. 
\item The stronger expression $\tnorm{ (\cdot, \cdot) }$ is a norm on  $\ProdFullyDiscrSpace{k}{q} \times \ProdFullyDiscrSpace{ k_{\ast} }{ q_{\ast} }$. 
\end{enumerate}
\end{corollary}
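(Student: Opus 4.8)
The plan is to verify the norm axioms, of which only definiteness requires real work. Both $\tnorm{(\cdot,\cdot)}_h$ and $\tnorm{(\cdot,\cdot)}$ are non-negative and, being the Euclidean combination of the seminorms $\abs{\cdot}_{S_h}$, $\norm{\cdot}_{\STdata}$ and $\norm{\cdot}_{S_h^{\ast}}$ (together with $\abs{\cdot}_{\uparrow \downarrow}$ in the fully-discrete case), each of which is the square root of a symmetric positive semidefinite quadratic form, they inherit absolute homogeneity and the triangle inequality by the usual Minkowski argument. Hence it suffices to show in each case that vanishing of the expression forces the argument to be zero.

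For part (a), suppose $\tnorm{(\underline{\mathbf{U}}_h, \underline{\mathbf{Z}}_h)}_h = 0$ for a tuple in $\ProdSemiDiscrSpace{k} \times \ProdSemiDiscrSpace{k_{\ast}}$. By \Cref{lem:IP-norm}(b), \cref{eq:h-norm-zero-impl} gives $\underline{\mathbf{Z}}_h = 0$, $\underline{u}_1|_{\Sigma} = 0$, $\underline{u}_1|_{\STdata} = 0$ and $\partial_t \underline{u}_1 = \underline{u}_2$. Since functions in $\SemiDiscrSpace{k}$ are continuous in time, every temporal jump $\jump{\underline{u}_2^n}$ vanishes, so the supremum on the right-hand side of \cref{eq:h-norm-zero-res-H-1} is zero and therefore $\norm{\Box \underline{u}_1}_{H^{-1}(\STdom)} = 0$; moreover $\underline{u}_1 \in H^1(0,T;V_h^k) \subset H^1(\STdom)$. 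I would then apply the stability estimate \cref{eq:Lipschitz-stability} of \Cref{thm:Lipschitz-stab} to $\phi = \underline{u}_1$: all three terms on its right-hand side vanish, whence $\norm{\underline{u}_1}_{L^{\infty}(0,T;L^2(\Omega))} = 0$, i.e.\ $\underline{u}_1 = 0$, and consequently $\underline{u}_2 = \partial_t \underline{u}_1 = 0$. This establishes definiteness.

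For part (b), suppose $\tnorm{(\underline{\mathbf{U}}_h, \underline{\mathbf{Z}}_h)} = 0$ for a tuple in $\ProdFullyDiscrSpace{k}{q} \times \ProdFullyDiscrSpace{k_{\ast}}{q_{\ast}}$. Then both $\tnorm{(\underline{\mathbf{U}}_h, \underline{\mathbf{Z}}_h)}_h = 0$ and $\abs{\underline{\mathbf{U}}_h}_{\uparrow \downarrow} = 0$. The latter, in view of \cref{eq:def-jump-stab}, forces each of $\jump{\underline{u}_1^n}$, $\jump{\nabla \underline{u}_1^n}$ and $\jump{\underline{u}_2^n}$ to vanish for $n=1,\ldots,N-1$. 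Vanishing of $\jump{\underline{u}_2^n}$ again makes the right-hand side of \cref{eq:h-norm-zero-res-H-1} equal to zero, so $\norm{\Box \underline{u}_1}_{H^{-1}(\STdom)} = 0$, while vanishing of $\jump{\underline{u}_1^n}$ means the piecewise-in-time polynomial $\underline{u}_1$ is continuous across all slab interfaces, hence $\underline{u}_1 \in H^1(0,T;V_h^k) \subset H^1(\STdom)$. Together with the consequences of $\tnorm{(\cdot,\cdot)}_h = 0$ from \cref{eq:h-norm-zero-impl}, the argument of part (a) then applies to give $\underline{u}_1 = 0$, and subsequently $\underline{u}_2 = 0$ and $\underline{\mathbf{Z}}_h = 0$.

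The one point deserving care, and the reason the stabilization $S^{\uparrow \downarrow}_{\Delta t}$ was introduced, is the upgrade to $H^1(\STdom)$ regularity in part (b): \Cref{thm:Lipschitz-stab} may only be invoked for $\phi \in H^1(\STdom)$, yet a generic element of $\FullyDiscrSpace{k}{q}$ is merely piecewise smooth in time and hence not $H^1(\STdom)$-conforming. It is precisely the value-jump contribution $\frac{1}{\Delta t}(\jump{\underline{u}_1^n},\jump{\underline{u}_1^n})_{\Omega}$ inside $\underline{I}_1^{\uparrow \downarrow}$ which, upon vanishing, restores temporal continuity and thus the conformity needed to close the argument. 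Everything else amounts to reading off the vanishing of the individual stabilization terms from \cref{eq:Def-Sh}--\cref{eq:triple_norm_h_def} and is routine.
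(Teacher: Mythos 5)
Your proof is correct and takes essentially the same route as the paper: definiteness follows from \cref{lem:IP-norm}(b) together with the Lipschitz stability estimate \cref{eq:Lipschitz-stability} applied to $\phi = \underline{u}_1$, and part (b) is reduced to part (a) exactly as in the paper by observing that $\abs{\underline{\mathbf{U}}_h}_{\uparrow \downarrow} = 0$ forces the temporal jumps to vanish and hence restores the $H^1(\STdom)$-conformity needed to invoke \cref{thm:Lipschitz-stab}. Your explicit verification of the remaining norm axioms and of why $\jump{\underline{u}_2^n}=0$ kills the supremum in \cref{eq:h-norm-zero-res-H-1} merely spells out steps the paper leaves implicit.
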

\begin{proof}
\begin{enumerate}[label=(\alph*)]
	\item Let  $\tnorm{ (\underline{\mathbf{U}}_h, \underline{\mathbf{Z}}_h) }_h = 0$. Since $\underline{u}_2 \in H^1(\STdom)$, we have $\Box \underline{u}_1 = 0 $ in $H^{-1}(\STdom)$ from \cref{eq:h-norm-zero-res-H-1}. As $\underline{u}_1 \in  H^1(\STdom)$, we are allowed to apply the stability estimate from \cref{eq:Lipschitz-stability} to $\phi = \underline{u}_1$. Thanks to \cref{eq:h-norm-zero-impl} we then obtain as required that 
\[ \underline{u}_1 = 0, \quad \underline{u}_2 = \partial_t \underline{u}_1 = 0, \quad \text{and }  \underline{\mathbf{Z}}_h = 0.
\]
\item Since $ \abs{\underline{\mathbf{U}}_h }_{ \uparrow \downarrow } = 0$ implies $(\underline{u}_1, \underline{u}_2) \in [H^1(\STdom)]^2$, the same argument as in (a) applies. 
\end{enumerate}
\end{proof}
Due to
\[
	B_h[ (\underline{\mathbf{U}}_h,\underline{\mathbf{Z}}_h), ( \underline{\mathbf{U}}_h ,-\underline{\mathbf{Z}}_h ) ]  = \tnorm{ (\underline{\mathbf{U}}_h, \underline{\mathbf{Z}}_h) }_h \tnorm{ (\underline{\mathbf{U}}_h, -\underline{\mathbf{Z}}_h) }_h, 
\quad 
S^{\uparrow \downarrow}_{\Delta t}( \underline{\mathbf{U}}_h, \underline{\mathbf{U}}_h ) = \abs{ \underline{\mathbf{U}}_h   }_{ \uparrow \downarrow }^2  
\]
and \cref{cor:norm} we then obtain inf-sup stability on the respective spaces:
\begin{align}
 \label{eq:inf-sup}
 \sup_{ (\underline{\mathbf{W}}_h,\underline{\mathbf{Y}}_h) \in  \ProdSemiDiscrSpace{k} \times \ProdSemiDiscrSpace{ k_{\ast} }    } \!\!\!\!\!\!\!\!\!\!\!\! \frac{ B_h[ (\underline{\mathbf{U}}_h,\underline{\mathbf{Z}}_h), ( \underline{\mathbf{W}}_h , \underline{\mathbf{Y}}_h ) ] }{  \tnorm{ (\underline{\mathbf{W}}_h, \underline{\mathbf{Y}}_h) }_h } &\geq C \tnorm{ ( \underline{\mathbf{U}}_h, \underline{\mathbf{Z}}_h) }_h, \\
	\sup_{ (\underline{\mathbf{W}}_h,\underline{\mathbf{Y}}_h) \in  \ProdFullyDiscrSpace{k}{q} \times \ProdFullyDiscrSpace{ k_{\ast} }{ q_{\ast} }    } \!\!\!\!\!\!\!\!\!\!\!\!\!\!\! \frac{ B[ (\underline{\mathbf{U}}_h,\underline{\mathbf{Z}}_h), ( \underline{\mathbf{W}}_h , \underline{\mathbf{Y}}_h ) ] }{  \tnorm{ (\underline{\mathbf{W}}_h, \underline{\mathbf{Y}}_h) } } &\geq C \tnorm{ ( \underline{\mathbf{U}}_h, \underline{\mathbf{Z}}_h) }. \nonumber 
\end{align} 
 
In the next lemma we record some continuity results for the bilinear form $A[\cdot,\cdot]$.
\begin{lemma}[Continuity]\label{lem:bfi_stab_est}
\begin{enumerate}[label=(\alph*)]
\item For $\underline{\mathbf{U}}_h \in \ProdSemiDiscrSpace{k} + \ProdFullyDiscrSpace{k}{q}$ and $ \mathbf{Y} \in [H^1(\STdom)]^2 + \ProdFullyDiscrSpace{q_{\ast} }{k_{\ast} } $ we have 
\[ A[\underline{\mathbf{U}_h}, \mathbf{Y} ] \leq C \abs{ \underline{\mathbf{U}}_h }_{\underline{S}_h} \left\{ \sum\limits_{n=0}^{N-1} h^{-2} \norm{ y_1 }_{Q^n}^2 +  \norm{ \nabla y_1 }_{Q^n}^2 + \norm{ y_2 }_{Q^n}^2  \right\}^{1/2}. \] 
\item For $\mathbf{U}|_{Q^n} \in \left[ H^{1}(Q^n) \cap L^2(0,T;H^2(\mathcal{T}_h)) \right] \times H^{1}(Q^n) $ for all $n=0,\ldots,N-1$ and $\underline{\mathbf{Y}}_h \in \ProdSemiDiscrSpace{k_{\ast}} + \ProdFullyDiscrSpace{ k_{\ast } }{ q_{\ast} }$ it holds that
\begin{align*}
A[ \mathbf{U}, \underline{\mathbf{Y}}_h ] \leq C & \bigg( \sum\limits_{n=0}^{N-1} \big\{ \norm{\partial_t u_2}_{Q^n}^2  + \norm{ \nabla u_1 }_{Q^n}^2
 + \norm{\partial_t u_1}_{Q^n}^2 + \norm{ u_2}_{Q^n}^2  \\
 & + \int\limits_{I_n} \sum\limits_{K \in \mathcal{T}_h } h^2 \norm{u_1}_{H^2(K)}^2 \; \dT \big\}  \bigg)^{1/2}  \norm{  \underline{\mathbf{Y}}_h }_{ S^{\ast}_h  }.
\end{align*}
\end{enumerate}
\end{lemma}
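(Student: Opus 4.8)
The plan is to prove both continuity bounds by distributing the spatial derivatives appropriately and then applying Cauchy--Schwarz slab by slab; the difference between the two parts lies in which factor the derivatives are allowed to fall on. In part (a) the second argument $\mathbf{Y}$ is only controlled in plain $L^2$- and $H^1$-type norms, so every spatial derivative has to be moved off $y_1$ onto the discrete function $\underline{\mathbf{U}}_h$, whose roughness is measured by $\abs{\underline{\mathbf{U}}_h}_{S_h}$. In part (b) the roles are reversed: the norm $\norm{\underline{\mathbf{Y}}_h}_{S_h^{\ast}}$ already supplies $L^2$-control of $y_1,y_2$ together with $\norm{\nabla y_1}$ and the scaled boundary norm $h^{-1/2}\norm{y_1}_{\Sigma^n}$, so no integration by parts is needed and the estimate follows directly from term-by-term Cauchy--Schwarz.

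For part (a) I would integrate $a(\underline{u}_1,y_1)_{Q^n}$ by parts in space element by element. Since $\underline{u}_1$ is a piecewise polynomial this is legitimate, and it produces $-(\Delta \underline{u}_1, y_1)_K$ on each element, the interior-facet contributions $(\jump{ \nabla\underline{u}_1\cdot\mathbf{n}}, y_1)_F$ (single-valuedness of $y_1$ across facets turning the two one-sided traces into the jump of the normal derivative), and a boundary contribution $(\nabla\underline{u}_1\cdot\mathbf{n}, y_1)_{\Sigma^n}$. The last term cancels exactly against the term $-(\nabla\underline{u}_1\cdot\mathbf{n}, y_1)_{\Sigma^n}$ already present in $A[\cdot,\cdot]$, so no lateral-boundary facet survives. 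Collecting what remains, $A[\underline{\mathbf{U}}_h,\mathbf{Y}]$ splits into three groups: the element residual $\int_{I_n}\sum_{K\in\mathcal{T}_h}(\partial_t\underline{u}_2 - \Delta\underline{u}_1, y_1)_K\,\dT$, the facet jumps, and $(\partial_t\underline{u}_1 - \underline{u}_2, y_2)_{Q^n}$. With the scalings built into the stabilization, the first group is bounded by $G(\underline{\mathbf{U}}_h,\underline{\mathbf{U}}_h)^{1/2}\big(\sum_n h^{-2}\norm{y_1}_{Q^n}^2\big)^{1/2}$ and the third by $I_0(\underline{\mathbf{U}}_h,\underline{\mathbf{U}}_h)^{1/2}\big(\sum_n\norm{y_2}_{Q^n}^2\big)^{1/2}$. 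For the facet term I would write $(\jump{\nabla\underline{u}_1\cdot\mathbf{n}},y_1)_F \leq (h^{1/2}\norm{\jump{\nabla\underline{u}_1\cdot\mathbf{n}}}_F)(h^{-1/2}\norm{y_1}_F)$, so that the first factor summed up is controlled by $J(\underline{\mathbf{U}}_h,\underline{\mathbf{U}}_h)^{1/2}$, and then invoke the trace inequality \eqref{ieq:cont-trace-ieq} to replace $h^{-1}\norm{y_1}_F^2$ by $C(h^{-2}\norm{y_1}_K^2 + \norm{\nabla y_1}_K^2)$, which produces exactly the $h^{-2}\norm{y_1}^2 + \norm{\nabla y_1}^2$ contributions. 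Since $G,J,I_0 \leq \abs{\underline{\mathbf{U}}_h}_{S_h}^2$, the three bounds combine into the claimed estimate.

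For part (b) I would estimate each of the four terms of $A[\mathbf{U},\underline{\mathbf{Y}}_h]$ directly by Cauchy--Schwarz, matching each factor $y_1,\nabla y_1,y_2$ with the corresponding contribution in $\norm{\underline{\mathbf{Y}}_h}_{S_h^{\ast}}^2 = \sum_n\{\norm{y_1}_{Q^n}^2 + \norm{\nabla y_1}_{Q^n}^2 + \norm{y_2}_{Q^n}^2 + h^{-1}\norm{y_1}_{\Sigma^n}^2\}$. The three bulk terms pair off against $\norm{\partial_t u_2}_{Q^n}$, $\norm{\nabla u_1}_{Q^n}$, and $\norm{\partial_t u_1}_{Q^n} + \norm{u_2}_{Q^n}$, respectively. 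The only term requiring care is the boundary term $-(\nabla u_1\cdot\mathbf{n}, y_1)_{\Sigma^n}$, which I would write as $(h^{1/2}\nabla u_1\cdot\mathbf{n},\, h^{-1/2}y_1)_{\Sigma^n}$ so that the second factor is controlled by $\norm{\underline{\mathbf{Y}}_h}_{S_h^{\ast}}$. Applying \eqref{ieq:cont-trace-ieq} to $v=\nabla u_1 \in [H^1(K)]^d$ gives $h\norm{\nabla u_1}_{\Sigma^n}^2 \leq C\int_{I_n}\sum_{K\in\mathcal{T}_h}(\norm{\nabla u_1}_K^2 + h^2\norm{\nabla^2 u_1}_K^2)\,\dT$, producing precisely the $\norm{\nabla u_1}_{Q^n}^2$ and $\int_{I_n}\sum_{K\in\mathcal{T}_h}h^2\norm{u_1}_{H^2(K)}^2\,\dT$ contributions on the right-hand side.

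The term-by-term Cauchy--Schwarz is routine; the two places that demand attention are the exact cancellation of the lateral-boundary term after element-wise integration by parts in part (a) --- which is what makes the $R$-stabilization superfluous for this particular estimate --- and, in both parts, the careful bookkeeping of the powers of $h$ when passing from facet and boundary norms to bulk norms through the trace inequality \eqref{ieq:cont-trace-ieq}. I expect the $h$-scaling in the facet term of part (a) to be the main obstacle, since one must check that the weight $h$ in $J$ combines with the $h^{-1}$ weight of the trace estimate to leave exactly the $h^{-2}\norm{y_1}^2$ and $\norm{\nabla y_1}^2$ scalings appearing in the statement.
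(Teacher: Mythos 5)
Your proposal is correct and follows essentially the same route as the paper's proof: element-wise integration by parts in part (a) with exact cancellation of the lateral-boundary term, the $h^{1/2}/h^{-1/2}$ splitting of the facet and boundary pairings, the trace inequality \cref{ieq:cont-trace-ieq} to convert facet norms of $y_1$ (respectively of $\nabla u_1$ in part (b)) into the bulk quantities $h^{-2}\norm{y_1}_{Q^n}^2 + \norm{\nabla y_1}_{Q^n}^2$ and $\norm{\nabla u_1}_{Q^n}^2 + \int_{I_n}\sum_K h^2\norm{u_1}_{H^2(K)}^2\,\dT$, and term-by-term Cauchy--Schwarz for the remaining contributions. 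The $h$-scaling bookkeeping you flag as the delicate point works out exactly as you predict, matching the paper's computation.
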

\begin{proof}
%\begin{enumerate}[label=(\alph*)]
%\item
Let us first prove (a). Performing integration by parts in space yields 
\begin{align*}
& \sum\limits_{n=0}^{N-1} \left\{ (\partial_t \underline{u}_2, y_1)_{Q^n} + a( \underline{u}_1,y_1)_{Q^n} - ( \nabla \underline{u}_1 \cdot \mathbf{n}, y_1)_{ \Sigma^n }  
\right\} \\ 
&= \sum\limits_{n=0}^{N-1} \left\{  \int\limits_{I_n} \sum\limits_{K \in \mathcal{T}_h } ( \partial_t \underline{u}_2 - \Delta \underline{u}_1, y_1 )_{K} \; \dT 
	+ \int\limits_{I_n} \sum\limits_{F \in \mathcal{F}_i}   ( \jump{ \nabla \underline{u}_1 \cdot \mathbf{n} } , y_1   )_{F} \; \dT  
	\right\} \\ 
	&\leq C \left[  G( \underline{\mathbf{U}}_h, \underline{\mathbf{U}}_h )^{1/2}  \left( \sum\limits_{n=0}^{N-1} \frac{1}{h^2} \norm{ y_1 }_{Q^n}^2   \right)^{1/2} \!\!\!\!\!\!\!\! 
+ J( \underline{\mathbf{U}}_h, \underline{\mathbf{U}}_h )^{1/2} \left(  \sum\limits_{n=0}^{N-1} \sum\limits_{F \in \mathcal{F}_i} \frac{1}{h} \norm{y_1}_{F}^2 \; \dT  \right)^{1/2}
\right] \\ 
&\leq C \abs{ \underline{\mathbf{U}}_h }_{ S_h } \left( \sum\limits_{n=0}^{N-1} h^{-2} \norm{ y_1 }_{Q^n}^2 +  \norm{ \nabla y_1 }_{Q^n}^2  \right)^{1/2}, 	
\end{align*}
where the trace inequality \cref{ieq:cont-trace-ieq} has been used in the last step. 
Also, 
\[
\sum\limits_{n=0}^{N-1} ( \partial_t \underline{u}_1 - \underline{u}_2, y_2  )_{ Q^n } 
\leq  \sum\limits_{n=0}^{N-1} \norm{ \partial_t \underline{u}_1 - \underline{u}_2 }_{ Q^n } \norm{ y_2 }_{ Q^n }
\leq C \abs{ \underline{\mathbf{U}}_h }_{ S_h } \left( \sum\limits_{n=0}^{N-1} \norm{ y_2 }_{ Q^n }^2 \right)^{1/2}. 
\]
This shows (a). Now to establish (b) we note that
\begin{align*}
\sum\limits_{n=0}^{N-1} \left\{ (\partial_t u_2, \underline{y}_1)_{Q_n} + a( u_1,\underline{y}_1)_{Q^n} \right\}  \leq C \left( \sum\limits_{n=0}^{N-1} \norm{\partial_t u_2}_{Q^n}^2  + \norm{ \nabla u_1 }_{Q^n}^2  \right)^{1/2} \norm{  \underline{\mathbf{Y}}_h }_{ S^{\ast}_h  }.
\end{align*}
Further, by using the trace inequality \cref{ieq:cont-trace-ieq} we obtain  
\begin{align*}
 \sum\limits_{n=0}^{N-1} ( \nabla \underline{u}_1 \!\! \cdot \mathbf{n}, \underline{y}_1)_{ \Sigma^n } 
	& \leq \left( \sum\limits_{n=0}^{N-1} \int\limits_{I_n} \! \sum\limits_{K \in \mathcal{T}_h^n }\!\!\! h \norm{ \nabla u_1 }_{ \partial \Omega \cap \partial K }^2 \! \right)^{\frac{1}{2} } \!\!\!
	\left(  \sum\limits_{n=0}^{N-1} \int\limits_{I_n} \! \sum\limits_{ K \in \mathcal{T}_h^n } \frac{1}{h} \norm{ \underline{y}_1}_{ \partial \Omega \cap \partial K }^2 \right)^{\frac{1}{2} } \\ 
	& \leq C \left(  \sum\limits_{n=0}^{N-1} \big\{  \norm{ \nabla u_1 }_{Q^n}^2  + \int\limits_{I_n} \sum\limits_{ K \in \mathcal{T}_h } h^2 \norm{u_1}_{H^2(K)}^2 \; \dT \big\}  \right)^{1/2}  \norm{  \underline{\mathbf{Y}}_h }_{S^{\ast}_h }.
\end{align*}
An application of the Cauchy-Schwarz inequality then covers the remaining term
\[
\sum\limits_{n=0}^{N-1} ( \partial_t u_1 - u_2, \underline{y}_2  )_{ Q^n } 
\leq C \left( \sum\limits_{n=0}^{N-1} \norm{ \partial_t u_1 }_{ Q^n }^2 + \norm{ u_2 }_{ Q^n }^2 \right)^{1/2} \left( \sum\limits_{n=0}^{N-1} \norm{ \underline{y}_2 }_{ Q^n }^2 \right)^{1/2}.  
\]
%\end{enumerate} 
\end{proof}
We also require some interpolation operators into the semi and fully-discrete spaces. 
For the proof of the following lemma we refer to the appendix. 
\begin{lemma}[Interpolation]\label{lem:interp-slabwise}
There exist interpolation operators $\Pi_h^k$ into $\SemiDiscrSpace{k}$ and $\Pi_{h, \Delta t}^{k,q}$ into $\FullyDiscrSpace{k}{q}$ such that 
for $\Pi_h \in \{ \Pi_h^k,  \Pi_{h, \Delta t}^{k,q}\}$ under the assumption $\Delta t = C h$ the following interpolation estimates are valid for
$n=0,\ldots,N-1$:
\begin{enumerate}[label=(\alph*)]
\item $\sum\limits_{n=0}^{N-1} \{  h^{-1} \norm{u - \Pi_h u }_{ L^2(Q^n) } + \norm{ \nabla (u - \Pi_h u) }_{ L^2(Q^n) } \} \leq C \norm{u}_{H^1(Q)}  $.		
\item $\sum\limits_{n=0}^{N-1}  \{ h^{-1} \norm{u - \Pi_h u }_{ L^2(Q^n) } + \norm{ u - \Pi_h u }_{ H^1(Q^n) } \} \leq C h^s \norm{u}_{H^{m-1}(Q)}$,
\item For $u \in H^m(Q^n) \cap C^0(I_n, H^2(\Omega))$ we have: 
	\[ \left(\sum\limits_{n=0}^{N-1} \int\limits_{I_n} \sum\limits_{K \in \mathcal{T}_h } h^2 \norm{ u - \Pi_h  u }_{ H^2(K) }^2 \; \dT \right)^{1/2} \leq C h^s \norm{u}_{H^{m}(Q)}, \]
\end{enumerate}
where we have defined 
%\vspace{-2.0em}
\begin{equation}\label{def:s-m}
(s,m) = 
\begin{cases} 
    (k,k+2), &  \text{ when } \Pi_h = \Pi_h^k, \\ 
    ( \min\{k,q\}, \max\{k,q\}+3) , & \text{ when } \Pi_h = \Pi_{h, \Delta t}^{k,q}.
\end{cases} 
\end{equation}
\end{lemma}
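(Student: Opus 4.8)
The plan is to construct both operators as tensor products of a spatial quasi-interpolant with a temporal interpolant, and then to reduce each of the three bounds to standard anisotropic approximation estimates, using Fubini's theorem to separate the space and time contributions and the coupling $\Delta t = C h$ to trade powers of $\Delta t$ for powers of $h$.

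First I would fix a spatial operator $\pi_h^k \colon H^1(\Omega) \to V_h^k$ of Scott--Zhang type, which is $H^1$-stable, local, preserves $\mathcal{P}_k$, and satisfies for $1 \le \ell \le k+1$ the local estimates $\norm{v - \pi_h^k v}_{H^j(K)} \le C h^{\ell-j} \abs{v}_{H^\ell(\tilde K)}$, $j \in \{0,1\}$, on a patch $\tilde K$ around $K$; for part (c) I also need the corresponding $j=2$ broken bound, valid when $v \in H^2$. The $H^1$-stability (rather than a nodal Lagrange interpolant) is essential for (a), which only assumes $u \in H^1(Q)$. For the semi-discrete operator I set $(\Pi_h^k u)(t) := \pi_h^k(u(t,\cdot))$ pointwise in $t$; since time is not discretized here, $\pi_h^k$ commutes with $\partial_t$, so integrating the pointwise spatial estimates over $(0,T)$ immediately yields (a)--(c) with the first line of \cref{def:s-m}. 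For the fully-discrete operator I would compose $\pi_h^k$ with a node-based temporal interpolant $\rho_{\Delta t}^q$ onto $\mathcal{P}^q(I_n)$ acting slab by slab (e.g.\ Gauss--Lobatto interpolation), set $\Pi_{h,\Delta t}^{k,q} = \rho_{\Delta t}^q \otimes \pi_h^k$, and split the error as $(I - \Pi_{h,\Delta t}^{k,q})u = (I - \pi_h^k)u + \pi_h^k(I - \rho_{\Delta t}^q)u$.

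The rates then follow by bookkeeping. In (a) and (b) the controlled quantities are of first order ($h^{-1}L^2$-- and $H^1$--type), so each direction loses one order: spatially $\norm{\nabla(u - \pi_h^k u)} \sim h^k$ and $h^{-1}\norm{u - \pi_h^k u} \sim h^k$, while temporally $\norm{\partial_t(u - \rho_{\Delta t}^q u)} \sim (\Delta t)^q = (Ch)^q$ and $(\Delta t)^{-1}\norm{u - \rho_{\Delta t}^q u} \sim h^q$. Combining the two directions gives the rate $h^{\min\{k,q\}}$ of the second line of \cref{def:s-m}. The regularity index $m$ is dictated by the need to bound the mixed space--time derivatives appearing in the cross term $\pi_h^k(I - \rho_{\Delta t}^q)u$ by the single \emph{isotropic} norm $\norm{u}_{H^m(Q)}$; since (c) already involves two spatial derivatives, the worst mixed term forces the generous count $m = \max\{k,q\}+3$.

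The main obstacle is precisely the fully-discrete, anisotropic estimate (c). Three points need care: (i) the temporal interpolant $\rho_{\Delta t}^q$ is node-based, so evaluating it requires $u$ to be continuous in time with values in the spatial space in which the estimate is measured — for the broken $H^2$ bound this forces the pointwise-in-time hypothesis $u \in C^0(I_n, H^2(\Omega))$ stated in (c), while in (a) the one-dimensional embedding $H^1(I_n) \hookrightarrow C^0(I_n)$ suffices; (ii) the temporal and spatial interpolants must be verified to commute so that the tensor-product splitting above is legitimate and $\Pi_{h,\Delta t}^{k,q}$ indeed maps into $\FullyDiscrSpace{k}{q}$; and (iii) dominating the anisotropic mixed norms by the isotropic Sobolev norm on $Q$ is what ultimately fixes $m$ and is the source of the apparent suboptimality in \cref{def:s-m}. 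These technical verifications are exactly what is deferred to \cref{appendix:space-time-interp}.
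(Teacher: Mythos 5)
Your tensor-product skeleton (spatial operator composed with a slab-wise temporal operator, error split into a purely spatial and a cross term, rates combined via $\Delta t = Ch$) is the same as in \cref{appendix:space-time-interp}, but your choice of a \emph{node-based} temporal interpolant creates a genuine gap in part (a) for the fully discrete operator. Part (a) must hold for arbitrary $u \in H^1(\STdom)$ — it is applied in \cref{lem:interp_in_stab} (c) and \cref{lem:res-bound} to dual test functions that are merely $H^1_0(\STdom)$. In your splitting the cross term is $\pi_h^k(I-\rho^q_{\Delta t})u$, and to bound $\norm{ \nabla \pi_h^k (I-\rho^q_{\Delta t}) u }_{Q^n}$ via $H^1$-stability of Scott--Zhang you need $(I-\rho^q_{\Delta t})u \in L^2(I_n;H^1(\Omega))$, i.e.\ the temporal nodal values of $\nabla u$. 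These do not exist: $u \in H^1(Q^n)$ embeds into $C^0(\bar I_n; L^2(\Omega))$ but not into $C^0(\bar I_n; H^1(\Omega))$, so the one-dimensional embedding you invoke covers the values of $u$ only, not of its gradient, and $\nabla \rho^q_{\Delta t} u$ is undefined at this regularity. This is exactly why the paper takes the temporal operator to be the $L^2$-projection \cref{eq:Def temporal L2-projection}: it is defined and $L^2$-stable without any time-continuity and commutes with spatial derivatives (\cref{Lemma:Temporal interpolation commute with spatial derivatives}), so $\nabla \pi^q_n u = \pi^q_n \nabla u$ yields (a) immediately. Your construction could be repaired — commute to $(I-\rho^q_{\Delta t})\nabla \pi_h^k u$, which is continuous in time with values in the finite-dimensional $V_h^k$, then use the inverse inequality $\norm{\nabla \pi_h^k \partial_t u}_{Q^n} \leq C h^{-1} \norm{\partial_t u}_{Q^n}$ together with $\Delta t = Ch$ — but that argument is not in your writeup, and as stated the step fails.

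A second omission is the boundary-fitted mesh: $\mathcal{T}_h = \{K \cap \Omega \mid K \in \hat{\mathcal{T}}_h\}$ contains cut elements near the curved boundary, which are not shape regular, so local Scott--Zhang patch estimates and the broken $H^2$ bound in (c) are not available on $\mathcal{T}_h$ directly. The paper's proof works entirely on the extended cylinder $\hat{Q} = (0,T) \times \hat{\Omega}$ with the uncut quasi-uniform mesh $\hat{\mathcal{T}}_h$, precomposes with a continuous Stein extension $\mathcal{E}: H^s(\STdom) \rightarrow H^s(\hat{Q})$, and only then restricts to $\STdom$; this extension step is what legitimizes every local estimate and is absent from your argument. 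Two smaller points: in the paper the hypothesis $u \in C^0(I_n,H^2(\Omega))$ in (c) is consumed by comparing the spatial $L^2$-projection with the spatial Lagrange interpolant (which requires pointwise-in-time $H^2$ data via $H^2(\hat{\Omega}) \hookrightarrow C^0$ for $d \leq 3$), not by temporal nodal evaluation as in your reading; and your rate bookkeeping $(s,m) = (\min\{k,q\}, \max\{k,q\}+3)$, including the source of the generous regularity index $m$ in the mixed derivatives of the cross term, does agree with \cref{def:s-m} and with the paper's \cref{Thm:Erorror bounds for space-time interpolation on time slab}.
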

In the following we continue to use the generic notation $\Pi_h$ to prove some results which are valid for both interpolation operators. Interpolation into the spaces 
containing the dual variable is denoted by $\Pi_h^{\ast} \in \{ \Pi_h^{k_{\ast}},  \Pi_{h, \Delta t}^{k_{\ast},q_{\ast} }\}$. 
These operators are extended to tuples $\mathbf{W} = (w_1,w_2)$ in a componentwise-manner, i.e.\
$\mathbf{\Pi}_h \mathbf{W} :=  ( \Pi_h w_1 , \Pi_h w_2 )$. 
We also keep the assumption $\Delta t = C h$ from now on. 
\begin{lemma}\label{lem:interp_in_stab}
Let $u \in H^{m}(Q)$ solve \cref{eq:PDE+lateralBc} and set $ \mathbf{U} = (u, \partial_t u)$. Let $(s,m)$ be as defined as in \cref{def:s-m}.
\begin{enumerate}[label=(\alph*)]
\item For  $u \in H^{m }(Q) \cap C^{0}([0,T],H^2(\Omega))$ we have 
\[ \abs{ \mathbf{\Pi}_h \mathbf{U} }_{ S_h} = \abs{ \mathbf{\Pi}_h \mathbf{U} - \mathbf{U} }_{ S_h}   \leq C h^{ s }  \norm{u}_{ H^{m}(\STdom)  }.  \]
\item For  $ \underline{\mathbf{U}}_h \in \ProdSemiDiscrSpace{k} + \ProdFullyDiscrSpace{k}{q}$ it holds that
\[
 \norm{u-\underline{u}_1}_{\Sigma}  + \norm{u-\underline{u}_1}_{\STdata}
\leq C \left(  h^{ s+1/2 }  \norm{u}_{ H^{m}(\STdom)} + \tnorm{ ( \underline{\mathbf{U}}_h - \mathbf{\Pi}_h \mathbf{U},0) }_h \right).
\]
\item For $\mathbf{Y} = (y_1,y_2) \in [H_0^1(\STdom )]^2$ we have
\[ \norm{ \mathbf{\Pi}_h^{\ast}  \mathbf{Y} }_{ S^{\ast}_h } \leq C \left( \norm{y_1}_{H^1(\STdom)} + \norm{y_2}_{H^1(\STdom)} \right). 
\]
\end{enumerate}
\end{lemma}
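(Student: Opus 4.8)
The plan is, in each of the three parts, to reduce the stabilization (semi-)norms to the interpolation estimates of \Cref{lem:interp-slabwise} together with the trace inequality \cref{ieq:cont-trace-ieq}.

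For (a), the starting point is that $\mathbf{U}=(u,\partial_t u)$ lies in the kernel of $S_h$: since the exact solution satisfies $u(t,\cdot)\in H^2(\Omega)$ we have $\jump{\nabla u}=0$ across interior facets and hence $J(\mathbf{U},\mathbf{U})=0$; since $\partial_t u_2-\Delta u_1=\Box u=0$ we get $G(\mathbf{U},\mathbf{U})=0$; since $u_2=\partial_t u_1$ we have $I_0(\mathbf{U},\mathbf{U})=0$; and since $u|_{\Sigma}=0$ by \cref{eq:PDE+lateralBc} the boundary term $R(\mathbf{U},\mathbf{U})$ vanishes. Thus $\abs{\mathbf{U}}_{S_h}=0$, and the asserted identity $\abs{\mathbf{\Pi}_h\mathbf{U}}_{S_h}=\abs{\mathbf{\Pi}_h\mathbf{U}-\mathbf{U}}_{S_h}$ follows from the triangle inequality for the seminorm. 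It then remains to bound $\abs{\mathbf{\Pi}_h\mathbf{U}-\mathbf{U}}_{S_h}$ term by term, writing $w_1=\Pi_h u-u$ and $w_2=\Pi_h\partial_t u-\partial_t u$. For $J$ and $R$ I would apply the trace inequality \cref{ieq:cont-trace-ieq} to pass from facet, respectively boundary, norms to element norms and then invoke \Cref{lem:interp-slabwise}(b)--(c); the weights $h$ in $J$ and $h^{-1}$ in $R$ are calibrated precisely so that the outcome is $Ch^{s}\norm{u}_{H^{m}(Q)}$. For $G$ one splits $\partial_t w_2-\Delta w_1$ and uses \Cref{lem:interp-slabwise}(c) for the $h^2\norm{w_1}_{H^2(K)}^2$ contribution and \Cref{lem:interp-slabwise}(b), applied to $\partial_t u$, for the $\partial_t w_2$ contribution.

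The most delicate term is $I_0$, because $w_2-\partial_t w_1=\Pi_h\partial_t u-\partial_t\Pi_h u$ is the commutator of the interpolation operator with $\partial_t$ and need not vanish. I would handle it by inserting $\pm\,\partial_t u$, writing $\Pi_h\partial_t u-\partial_t\Pi_h u=(\Pi_h\partial_t u-\partial_t u)+\partial_t(u-\Pi_h u)$, and estimating the two pieces by \Cref{lem:interp-slabwise}(b) (the first applied to $\partial_t u$, the second through the $H^1(Q^n)$ seminorm). Throughout one passes from $(\sum_n a_n^2)^{1/2}$ to $\sum_n a_n$ to match the form of the interpolation bounds, at the cost of a constant. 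This commutator, and the bookkeeping of powers of $h$ across the mixed pair $(u_1,u_2)$, is where I expect the main effort to lie.

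For (b), I split $u-\underline{u}_1=(u-\Pi_h u)+(\Pi_h u-\underline{u}_1)$. The interpolation part $u-\Pi_h u$ is treated by the trace inequality followed by \Cref{lem:interp-slabwise}(b): extracting a factor $h^{1/2}$ from the boundary trace gives the rate $h^{s+1/2}$ on $\Sigma$, while on $\STdata\subset Q$ the $L^2$ norm is bounded directly and even gains a power of $h$. The discrete part $\Pi_h u-\underline{u}_1=-(\underline{u}_1-\Pi_h u)$ is absorbed into $\tnorm{(\underline{\mathbf{U}}_h-\mathbf{\Pi}_h\mathbf{U},0)}_h$: its $\STdata$-norm is literally a summand of that triple norm, and its $\Sigma$-norm equals $h^{1/2}\big(\sum_n h^{-1}\norm{\underline{u}_1-\Pi_h u}_{\Sigma^n}^2\big)^{1/2}$, which is controlled by the $R$-contribution to $\abs{\underline{\mathbf{U}}_h-\mathbf{\Pi}_h\mathbf{U}}_{S_h}$ since $h^{1/2}\le C$. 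Finally, for (c) I bound each summand of $\norm{\cdot}_{S_h^{\ast}}^2$ with $z_i=\Pi_h^{\ast}y_i$: the volume terms $\norm{z_1}_{Q^n}$, $\norm{\nabla z_1}_{Q^n}$, $\norm{z_2}_{Q^n}$ follow from the triangle inequality and the $H^1$-stability estimate \Cref{lem:interp-slabwise}(a), while the only term needing an extra idea is the boundary contribution $h^{-1}\norm{z_1}_{\Sigma^n}^2$: since $y_1\in H^1_0(Q)$ vanishes on $\Sigma$, I rewrite $\Pi_h^{\ast}y_1=\Pi_h^{\ast}y_1-y_1$ there, apply \cref{ieq:cont-trace-ieq} and conclude with \Cref{lem:interp-slabwise}(a); this vanishing trace is exactly what keeps the factor $h^{-1}$ from spoiling the bound.
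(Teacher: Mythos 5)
Your proposal is correct and follows essentially the same route as the paper: term-by-term estimation of $J$, $G$, $I_0$, $R$ using the vanishing of $\abs{\mathbf{U}}_{S_h}$ (zero gradient jumps, $\Box u=0$ inserted in $G$, the commutator splitting $\Pi_h\partial_t u-\partial_t\Pi_h u=(\Pi_h\partial_t u-\partial_t u)+\partial_t(u-\Pi_h u)$ for $I_0$, and $u|_{\Sigma}=0$ for $R$), the interpolation-plus-discrete splitting with the $R$-term of the triple norm absorbing the boundary piece in (b), and the vanishing-trace trick $\Pi_h^{\ast}y_1=\Pi_h^{\ast}y_1-y_1$ on $\Sigma$ combined with \cref{ieq:cont-trace-ieq} and \cref{lem:interp-slabwise}(a) in (c). No gaps.
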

\begin{proof}
\begin{enumerate}[label=(\alph*)]
\item We estimate the terms in $\abs{ \mathbf{\Pi}_h \mathbf{U} }_{ S_h}$ separately. \par 
\noindent By using smoothness of $u$, the trace inequality \cref{ieq:cont-trace-ieq} and the interpolation estimates from \cref{lem:interp-slabwise} we obtain
\begin{align*}
& J( \mathbf{\Pi}_h \mathbf{U} , \mathbf{\Pi}_h \mathbf{U} ) = \sum\limits_{n=0}^{N-1} \int\limits_{I_n} \sum\limits_{F \in \mathcal{F}_i} h \norm{\jump{ \nabla ( \Pi_h u  - u)}}_{F}^2 \; \dT \\  
&\leq C \sum\limits_{n=0}^{N-1} \int\limits_{I_n} \sum\limits_{ K \in \mathcal{T}_h }  \left(   \norm{  \nabla ( \Pi_h u - u ) }^2_{ L^2(K)  } + h^2 \norm{  \Pi_h u - u  }^2_{ H^2(K)  }  \right) \dT \\
	& \leq C h^{2 s }  \norm{u}_{ H^m(\STdom)  }^2. 
\end{align*}
Inserting $\partial_t^2 u - \Delta u = 0$ and applying interpolation bounds gives
\begin{align*}
G( \mathbf{\Pi}_h \mathbf{U} ,\mathbf{\Pi}_h \mathbf{U} ) &= \sum\limits_{n=0}^{N-1} \int\limits_{I_n} \sum\limits_{K \in \mathcal{T}_h^n } h^2 \norm{ \partial_t \left( \Pi_h \partial_t u - \partial_t u \right) + \Delta ( u - \Pi_h  u) }_{K}^2  \; \dT \\
&  \leq C h^{2s} \left( \norm{\partial_t u }_{H^{m-1}(\STdom) }^2  + \norm{u}_{H^m(\STdom)}^2  \right) \leq C h^{2s} \norm{u}_{H^m(\STdom)}^2.  
\end{align*}
Similarly, we estimate
\begin{align*}
 I_0( \mathbf{\Pi}_h  \mathbf{U} , \mathbf{\Pi}_h  \mathbf{U} ) & =  \sum\limits_{n=0}^{N-1} \norm{  ( \Pi_h  \partial_t u - \partial_t u) +  \partial_t(u - \Pi_h  u)    }^2_{ Q^n } \leq C h^{2s} \norm{u}_{H^m(\STdom)}^2.
\end{align*}
The boundary term is treated by using the trace inequality and then appealing to the interpolation results: 
\begin{align*}
&  R( \mathbf{\Pi}_h \mathbf{U} , \mathbf{\Pi}_h \mathbf{U} )  =  \sum\limits_{n=0}^{N-1} \int\limits_{I_n} \sum\limits_{K \in \mathcal{T}_h } h^{-1} \norm{ \Pi_h u - u  }_{ L^2(\partial \Omega \cap \partial K )  }^2    \dT \\
	\leq &  C  \sum\limits_{n=0}^{N-1} \int\limits_{I_n} \sum\limits_{K \in \mathcal{T}_h } \left( h^{-2}  \norm{ \Pi_h u - u}_{ L^2(K ) }^2 + \norm{ \nabla \left( \Pi_h  u - u \right) }_{ L^2(K ) }^2    \right)  \dT \\
	& \leq C h^{2s} \norm{u}_{H^{m-1} (\STdom)}^2. 
\end{align*}
\item From interpolation estimates and the definition \cref{eq:triple_norm_h_def} of $\tnorm{\cdot}_h$ we obtain 
\begin{align*}
\norm{u-\underline{u}_1}_{\STdata} & \leq C \left( \norm{u-\Pi_h u }_{\STdata} + \tnorm{ ( \mathbf{\Pi}_h \mathbf{U} - \underline{ \mathbf{U}}_h , 0) }_h  \right)  \\
& \leq C \left(  h^{ s+1 }  \norm{u}_{ H^{m}(\STdom)} + \tnorm{ ( \underline{  \mathbf{U}}_h  - \mathbf{\Pi}_h \mathbf{U}  , 0) }_h  \right). 
\end{align*}
We proceed similarly for the boundary term using the trace inequality:  
\begin{align*}
& \norm{u-\underline{u}_1}_{\Sigma}^2   
	\leq C  \sum\limits_{n=0}^{N-1} \int\limits_{I_n} \sum\limits_{K \in \mathcal{T}_h }  \norm{ u - \Pi_h u }_{ L^2(\partial \Omega \cap \partial K )  }^2 +  \frac{1}{h} \norm{ \Pi_h u - \underline{u}_1  }_{ L^2(\partial \Omega \cap \partial K )  }^2   \\
& \leq  C  \left(  \sum\limits_{n=0}^{N-1} \left[ \frac{1}{h} \norm{  u - \Pi_h u }^2_{Q^n} +  h \norm{ \nabla (  u - \Pi_h u ) }^2_{Q^n}   \right] +  \tnorm{ ( \underline{  \mathbf{U}}_h  - \mathbf{\Pi}_h \mathbf{U}  , 0) }_h^2  \right) \\
& \leq C \left(  h^{ 2(s+1/2) }  \norm{u}_{ H^{m}(\STdom)}^2  + \tnorm{ ( \underline{  \mathbf{U}}_h  - \mathbf{\Pi}_h \mathbf{U}  , 0) }_h^2 \right). 
\end{align*}
\item By applying \cref{lem:interp-slabwise} (a) we obtain   
\[
\norm{ \Pi_h^{\ast} y_1  }_{ Q^n } +  \norm{ \Pi_h^{\ast} y_2  }_{ Q^n } + \norm{ \nabla \Pi_h^{\ast}  y_1  }_{ Q^n }  
	\leq C  \left( \norm{y_1}_{H^1(Q^n)} + \norm{y_2}_{H^1(Q^n)}  \right). 
\]
Using that $y_1$, vanishes on the lateral boundary we obtain by means of the trace inequality and \cref{lem:interp-slabwise} (a):
\begin{align*}
& h^{-1} \norm{ \Pi_h^{\ast} y_1}_{\Sigma^n}^2 = h^{-1} \norm{ \Pi_h^{\ast} y_1 - y_1}_{\Sigma^n}^2 \\
& \leq C \int\limits_{I_n} \left( h^{-2} \norm{ \Pi_h^{\ast} y_1 - y_1  }_{ L^2(\Omega ) }^2 + \norm{ \Pi_h^{\ast} y_1 - y_1  }_{ H^1(\Omega ) }^2  \right)  \; \dT \leq C \norm{y_1}_{H^1(Q^n)}^2. 
\end{align*}
\end{enumerate} 
\end{proof}

\subsection{Consistency and residual bounds}\label{ssection:consistency-res}
In this subsection we establish two additional results which are crucial for the analysis of the semi-discrete and the fully-discrete method. The first lemma mainly follows from consistency of the stabilization $S_h$ and the bilinear form $A$. 

\begin{lemma}\label{lem:consistency-bound}
Let $u$ be a solution of \cref{eq:PDE+lateralBc} and set $ \mathbf{U} = (u, \partial_t u)$. Then for any  
$ \underline{\mathbf{W}}_h \in \ProdSemiDiscrSpace{k} + \ProdFullyDiscrSpace{k}{q}$ and $ \underline{\mathbf{Y}}_h \in \ProdSemiDiscrSpace{k_{\ast}} + \ProdFullyDiscrSpace{ k_{\ast } }{ q_{\ast} }$ it holds that 
\begin{align*}
(u -  \Pi_h u,\underline{w}_1)_{\STdata} -  S_h(\mathbf{\Pi}_h \mathbf{U}, \underline{\mathbf{W}}_h) -
A[ \mathbf{\Pi}_h \mathbf{U} , \underline{\mathbf{Y}}_h ] 
\leq C h^{s} \norm{u}_{H^m(\STdom)} \tnorm{ (\underline{\mathbf{W}}_h, \underline{\mathbf{Y}}_h) }_h, 
\end{align*}
where $s$ is as defined in \cref{def:s-m}.
\end{lemma}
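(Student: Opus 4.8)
The plan is to bound the three contributions on the left-hand side separately, each by an expression of the form $C h^{s}\norm{u}_{H^{m}(\STdom)}\tnorm{(\underline{\mathbf{W}}_h,\underline{\mathbf{Y}}_h)}_h$, and then to conclude by noting that, thanks to the definition \cref{eq:triple_norm_h_def}, the three factors $\abs{\underline{\mathbf{W}}_h}_{S_h}$, $\norm{\underline{w}_1}_{\STdata}$ and $\norm{\underline{\mathbf{Y}}_h}_{S_h^{\ast}}$ are each dominated by $\tnorm{(\underline{\mathbf{W}}_h,\underline{\mathbf{Y}}_h)}_h$. Since the inequality is one-sided it suffices to bound each term in modulus.

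For the data term $(u-\Pi_h u,\underline{w}_1)_{\STdata}$ I would apply Cauchy--Schwarz to obtain $\norm{u-\Pi_h u}_{\STdata}\norm{\underline{w}_1}_{\STdata}$, bound $\norm{\cdot}_{\STdata}\leq\norm{\cdot}_{\STdom}$, and use the $L^2$-part of the interpolation estimate \cref{lem:interp-slabwise}(b) (passing from the $\ell^1$-sum there to the $L^2(\STdom)$-norm via $(\sum_n a_n^2)^{1/2}\leq\sum_n a_n$) to get $\norm{u-\Pi_h u}_{\STdom}\leq C h^{s+1}\norm{u}_{H^{m-1}(\STdom)}$, which is more than enough. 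The stabilization term $S_h(\mathbf{\Pi}_h\mathbf{U},\underline{\mathbf{W}}_h)$ is even more direct: since $S_h$ is a symmetric positive semi-definite form, Cauchy--Schwarz gives $\abs{\mathbf{\Pi}_h\mathbf{U}}_{S_h}\abs{\underline{\mathbf{W}}_h}_{S_h}$, and the first factor is exactly controlled by the interpolation bound $\abs{\mathbf{\Pi}_h\mathbf{U}}_{S_h}\leq C h^{s}\norm{u}_{H^{m}(\STdom)}$ of \cref{lem:interp_in_stab}(a).

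The term $A[\mathbf{\Pi}_h\mathbf{U},\underline{\mathbf{Y}}_h]$ is where the real work lies and is the step I expect to be the main obstacle. The crucial observation is the consistency of $A$ noted after \cref{eq:bil-form-def-fully-discrete}: because $u$ solves \cref{eq:PDE+lateralBc}, integration by parts in space yields $A[\mathbf{U},\underline{\mathbf{Y}}_h]=0$ for the exact $\mathbf{U}=(u,\partial_t u)$, so I may freely replace $\mathbf{\Pi}_h\mathbf{U}$ by the interpolation error $\mathbf{\Pi}_h\mathbf{U}-\mathbf{U}$. I would then apply the continuity estimate \cref{lem:bfi_stab_est}(b) to this error (its hypotheses hold thanks to the regularity of $u$ and the broken-polynomial smoothness of $\Pi_h u$), which bounds $A[\mathbf{\Pi}_h\mathbf{U}-\mathbf{U},\underline{\mathbf{Y}}_h]$ by $\norm{\underline{\mathbf{Y}}_h}_{S_h^{\ast}}$ times the square root of a slab-sum of the error quantities $\norm{\partial_t(\Pi_h\partial_t u-\partial_t u)}_{Q^n}$, $\norm{\nabla(\Pi_h u-u)}_{Q^n}$, $\norm{\partial_t(\Pi_h u-u)}_{Q^n}$, $\norm{\Pi_h\partial_t u-\partial_t u}_{Q^n}$ and $\int_{I_n}\sum_{K}h^2\norm{\Pi_h u-u}_{H^2(K)}^2\,\dT$.

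Each of these is then estimated by \cref{lem:interp-slabwise}(b)--(c): the $H^1(Q^n)$-contributions give $h^{s}\norm{u}_{H^{m-1}(\STdom)}$, the elementwise $H^2$-term is exactly \cref{lem:interp-slabwise}(c), and the terms carrying the interpolation error of $\partial_t u$ are handled by applying the estimates to $v=\partial_t u\in H^{m-1}(\STdom)$ and using $\norm{\partial_t u}_{H^{m-1}(\STdom)}\leq\norm{u}_{H^{m}(\STdom)}$, exactly as in the proof of \cref{lem:interp_in_stab}(a). The only genuine care needed is the bookkeeping of Sobolev indices and the passage from the $\ell^1$-type sums furnished by \cref{lem:interp-slabwise} to the $\ell^2$-type sum in \cref{lem:bfi_stab_est}(b), but this costs nothing and again yields the rate $h^{s}\norm{u}_{H^{m}(\STdom)}$. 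Collecting the three bounds and absorbing the factors $\abs{\underline{\mathbf{W}}_h}_{S_h}$, $\norm{\underline{w}_1}_{\STdata}$ and $\norm{\underline{\mathbf{Y}}_h}_{S_h^{\ast}}$ into $\tnorm{(\underline{\mathbf{W}}_h,\underline{\mathbf{Y}}_h)}_h$ then gives the claim.
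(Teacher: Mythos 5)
Your proposal is correct and follows essentially the same route as the paper's own proof: Cauchy--Schwarz together with \cref{lem:interp_in_stab}~(a) and standard interpolation bounds for the data and stabilization terms, and the consistency identity $A[\mathbf{U},\underline{\mathbf{Y}}_h]=0$ combined with the continuity estimate of \cref{lem:bfi_stab_est}~(b) and the interpolation results of \cref{lem:interp-slabwise}~(b)--(c) for the $A$-term. Your extra remarks (the $\ell^1$-to-$\ell^2$ passage, applying the estimates to $\partial_t u \in H^{m-1}(\STdom)$, and verifying the slab-wise regularity hypotheses of \cref{lem:bfi_stab_est}~(b)) are exactly the bookkeeping the paper leaves implicit.
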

\begin{proof}
From \cref{lem:interp_in_stab} (a) and interpolation results we immediately have 
\begin{align*}
(u -  \Pi_h u,\underline{w}_1)_{\STdata} - S_h(\mathbf{\Pi}_h \mathbf{U}, \underline{\mathbf{W}}_h) 
	& \leq C \left( \norm{ u -  \Pi_h u}_{ \STdata} + \abs{ \mathbf{\Pi}_h \mathbf{U}}_{S_h}  \right)  \tnorm{ (\underline{\mathbf{W}}_h, 0) }_h \\
	& \leq C h^{s} \norm{u}_{H^m(\STdom)}  \tnorm{ (\underline{\mathbf{W}}_h, 0) }_h. 
\end{align*}
For the remaining term we use $ A[ \mathbf{U} , \underline{\mathbf{Y}}_h ] = 0 $, \cref{lem:bfi_stab_est} (b) and interpolation estimates from 
\cref{lem:interp-slabwise} (b)-(c) to obtain
\begin{align*}
& A[ \mathbf{\Pi}_h \mathbf{U} , \underline{\mathbf{Y}}_h ] = A[ \mathbf{\Pi}_h \mathbf{U} - \mathbf{U}, \underline{\mathbf{Y}}_h ]  \\ 
	& \leq C \left( \sum\limits_{n=0}^{N-1}  \sum\limits_{i=1,2} \left\{ \norm{ \Pi_h u_i - u_i }_{ H^1(Q^n) }^2 \right\} + \int\limits_{I_n} \sum\limits_{K \in \mathcal{T}_h } h^2 \norm{  \Pi_h  u_1 - u_1  }_{ H^2(K) }^2   \right)^{1/2} 
\norm{  \underline{\mathbf{Y}}_h }_{ S^{\ast}_h  } \\ %\leq C h^{s} \norm{u}_{H^m(\STdom)}  \norm{  \underline{\mathbf{Y}}_h }_{ S^{\ast}_h  }.
& \leq C h^{s} \norm{u}_{H^m(\STdom)}  \norm{  \underline{\mathbf{Y}}_h }_{ S^{\ast}_h  }.
\end{align*}
\end{proof} 

The second lemma will later allow us to control the PDE residual $\norm{\Box \underline{u}_1 }_{H^1(\STdom)}$. 
 \begin{lemma}\label{lem:res-bound}
Let $ ( \underline{\mathbf{U}}_h, \underline{\mathbf{Z}}_h, \mathbf{\Pi}_h^{\ast} ) \in   \{( \ProdSemiDiscrSpace{k},\ProdSemiDiscrSpace{k_{\ast}},\mathbf{\Pi}_h^{k_{\ast}}) ,(\ProdFullyDiscrSpace{k}{q}, \ProdFullyDiscrSpace{ k_{\ast } }{ q_{\ast} }, \mathbf{\Pi}_{h, \Delta t}^{k_{\ast},q_{\ast}}) \}$ such that 
\begin{equation}\label{eq:optimality_dual}
A[ \underline{\mathbf{U}}_h, \mathbf{\Pi}_h^{\ast} \mathbf{W} ] = S_h^{\ast}(  \mathbf{\Pi}_h^{\ast} \mathbf{W}  , \underline{\mathbf{Z}}_h)  
\end{equation}
for any $ \mathbf{W} = (w_1,w_2) = (w,0)$ with $w \in H^1_0(\STdom)$ holds true. Then for  $ \mathbf{U} = (u, \partial_t u)$ with $u$ being the solution of \cref{eq:PDE+lateralBc} we have: 
\begin{align*}
& \sum\limits_{n=0}^{N-1} \{ (\partial_t \underline{u}_1, \partial_t w_1)_{ Q^n } - a(\underline{u}_1,w_1)_{Q^n} \} \\ 
& \leq \left\vert \sum\limits_{n=1}^{N-1} ( \jump{\underline{u}_2^n}, w_{1,+}^n  )_{\Omega} \right\vert + C \norm{w}_{H^1(\STdom)} \left(  \tnorm{ ( \underline{\mathbf{U}}_h - \mathbf{\Pi}_h \mathbf{U} ,\underline{\mathbf{Z}}_h )   }_h  +  \abs{ \mathbf{\Pi}_h \mathbf{U} }_{ S_h}  \right). 
%&  + \leq C \norm{w}_{H^1(\STdom)} \left(  \tnorm{ \underline{\mathbf{U}}_h - \mathbf{\Pi}_h \mathbf{U}   }_h +  \abs{ \mathbf{\Pi}_h \mathbf{U} }_{ S_h}  \right),
\end{align*}
\end{lemma}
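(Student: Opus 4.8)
The plan is to test the bilinear form $A[\cdot,\cdot]$ against $\mathbf{W}=(w,0)$ directly and to convert the outcome into the claimed weak residual by an integration by parts in time. First I would insert $\underline{\mathbf{Y}}_h=\mathbf{W}$ into the definition \cref{eq:bil-form-def-fully-discrete} of $A$. Since $w_2=0$ the term $(\partial_t\underline{u}_1-\underline{u}_2,\underline{y}_2)_{Q^n}$ drops, and since $w\in H^1_0(\STdom)$ vanishes on $\Sigma$ the lateral boundary contribution $(\nabla\underline{u}_1\cdot\mathbf{n},\underline{y}_1)_{\Sigma^n}$ drops as well, leaving $A[\underline{\mathbf{U}}_h,\mathbf{W}]=\sum_{n}\{(\partial_t\underline{u}_2,w)_{Q^n}+a(\underline{u}_1,w)_{Q^n}\}$. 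I would then integrate the first summand by parts in time on each slab $I_n$; because $w$ is continuous across slab interfaces and vanishes at $t=0$ and $t=T$, the boundary terms telescope exactly into $-\sum_{n=1}^{N-1}(\jump{\underline{u}_2^n},w_{1,+}^n)_{\Omega}$ together with $-\sum_n(\underline{u}_2,\partial_t w)_{Q^n}$.

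Next I would split $\underline{u}_2=\partial_t\underline{u}_1+(\underline{u}_2-\partial_t\underline{u}_1)$, so that the term $-\sum_n(\partial_t\underline{u}_1,\partial_t w)_{Q^n}$ combines with $\sum_n a(\underline{u}_1,w)_{Q^n}$ into the negative of the left-hand side of the lemma, while the remainder $-\sum_n(\underline{u}_2-\partial_t\underline{u}_1,\partial_t w)_{Q^n}$ is of the type controlled by the $I_0$-part of $S_h$. Rearranging produces the exact identity
\begin{align*}
\sum_{n=0}^{N-1}\{(\partial_t\underline{u}_1,\partial_t w)_{Q^n}-a(\underline{u}_1,w)_{Q^n}\}
={}&-A[\underline{\mathbf{U}}_h,\mathbf{W}]-\sum_{n=1}^{N-1}(\jump{\underline{u}_2^n},w_{1,+}^n)_{\Omega}\\
&-\sum_{n=0}^{N-1}(\underline{u}_2-\partial_t\underline{u}_1,\partial_t w)_{Q^n}.
\end{align*}
The jump term is left untouched (estimated by its absolute value, matching the statement), and it remains only to bound the other two contributions.

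To estimate $A[\underline{\mathbf{U}}_h,\mathbf{W}]$ I would invoke the hypothesis \cref{eq:optimality_dual}, writing $A[\underline{\mathbf{U}}_h,\mathbf{W}]=A[\underline{\mathbf{U}}_h,\mathbf{W}-\mathbf{\Pi}_h^{\ast}\mathbf{W}]+S_h^{\ast}(\mathbf{\Pi}_h^{\ast}\mathbf{W},\underline{\mathbf{Z}}_h)$. The second summand is controlled by Cauchy--Schwarz in $S_h^{\ast}$ together with \Cref{lem:interp_in_stab} (c), which yields $\norm{\mathbf{\Pi}_h^{\ast}\mathbf{W}}_{S_h^{\ast}}\leq C\norm{w}_{H^1(\STdom)}$, and by $\norm{\underline{\mathbf{Z}}_h}_{S_h^{\ast}}\leq\tnorm{(\underline{\mathbf{U}}_h-\mathbf{\Pi}_h\mathbf{U},\underline{\mathbf{Z}}_h)}_h$. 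For the first summand I apply the continuity estimate \Cref{lem:bfi_stab_est} (a) with $\underline{\mathbf{Y}}_h=\mathbf{W}-\mathbf{\Pi}_h^{\ast}\mathbf{W}=(w-\Pi_h^{\ast}w,0)$: the factor $\{\sum_n h^{-2}\norm{w-\Pi_h^{\ast}w}_{Q^n}^2+\norm{\nabla(w-\Pi_h^{\ast}w)}_{Q^n}^2\}^{1/2}$ is bounded by $C\norm{w}_{H^1(\STdom)}$ through \Cref{lem:interp-slabwise} (a), while the factor $\abs{\underline{\mathbf{U}}_h}_{S_h}$ is split by the triangle inequality $\abs{\underline{\mathbf{U}}_h}_{S_h}\leq\tnorm{(\underline{\mathbf{U}}_h-\mathbf{\Pi}_h\mathbf{U},\underline{\mathbf{Z}}_h)}_h+\abs{\mathbf{\Pi}_h\mathbf{U}}_{S_h}$. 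The same splitting, combined with $\sum_n\norm{\underline{u}_2-\partial_t\underline{u}_1}_{Q^n}^2=I_0(\underline{\mathbf{U}}_h,\underline{\mathbf{U}}_h)\leq\abs{\underline{\mathbf{U}}_h}_{S_h}^2$ and $\norm{\partial_t w}_{L^2(\STdom)}\leq\norm{w}_{H^1(\STdom)}$, disposes of the $I_0$-remainder. Collecting the pieces gives precisely the asserted bound.

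The only genuinely delicate step is the integration by parts in time: one must track the signs and the interface contributions carefully, verifying that the continuity of $w$ and the conditions $w(0)=w(T)=0$ collapse the telescoping boundary sum exactly into the interior jump term $\sum_{n=1}^{N-1}(\jump{\underline{u}_2^n},w_{1,+}^n)_{\Omega}$. Everything else is a routine chaining of Cauchy--Schwarz, the continuity and interpolation lemmas, and the triangle inequality for $\abs{\cdot}_{S_h}$; in particular the argument is uniform over the semi-discrete and fully-discrete cases, since both appear as admissible choices in the hypothesis and all invoked lemmas are stated for both settings.
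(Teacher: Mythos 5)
Your proof is correct and takes essentially the same route as the paper's: the same slab-wise integration by parts in time (with $w\in H^1_0(\STdom)$ killing the boundary terms at $t=0,T$ and on $\Sigma$, and the continuity of $w$ collapsing the telescoping sum into $-\sum_{n=1}^{N-1}(\jump{\underline{u}_2^n},w_{1,+}^n)_{\Omega}$), the same splitting $A[\underline{\mathbf{U}}_h,\mathbf{W}]=A[\underline{\mathbf{U}}_h,\mathbf{W}-\mathbf{\Pi}_h^{\ast}\mathbf{W}]+S_h^{\ast}(\mathbf{\Pi}_h^{\ast}\mathbf{W},\underline{\mathbf{Z}}_h)$ via hypothesis \cref{eq:optimality_dual}, and the same chaining of \cref{lem:bfi_stab_est}~(a), \cref{lem:interp-slabwise}~(a), \cref{lem:interp_in_stab}~(c), and the triangle inequality $\abs{\underline{\mathbf{U}}_h}_{S_h}+\norm{\underline{\mathbf{Z}}_h}_{S_h^{\ast}}\leq\tnorm{(\underline{\mathbf{U}}_h-\mathbf{\Pi}_h\mathbf{U},\underline{\mathbf{Z}}_h)}_h+\abs{\mathbf{\Pi}_h\mathbf{U}}_{S_h}$. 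The only difference is the (immaterial) sign convention on the $I_0$-remainder, which you write as $-\sum_n(\underline{u}_2-\partial_t\underline{u}_1,\partial_t w)_{Q^n}$ where the paper writes $+\sum_n(\partial_t\underline{u}_1-\underline{u}_2,\partial_t w_1)_{Q^n}$.
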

\begin{proof}
Integration by parts in time yields
\begin{align*}
& \sum\limits_{n=0}^{N-1} \{ (\partial_t \underline{u}_1, \partial_t w_1)_{ Q^n } - a(\underline{u}_1,w_1)_{Q^n} \} 
 = \sum\limits_{n=0}^{N-1} (\partial_t \underline{u}_1 - \underline{u}_2, \partial_t w_1)_{ Q^n } -  \sum\limits_{n=1}^{N-1} ( \jump{\underline{u}_2^n}, w_{1,+}^n  )_{\Omega} \\ 
& - \sum\limits_{n=0}^{N-1} \left\{  (\partial_t \underline{u}_2 , w_1)_{ Q^n } +  a(\underline{u}_1,w_1)_{Q^n} +  (\partial_t \underline{u}_1 - \underline{u}_2 , \underbrace{ w_2 }_{=0} )_{ Q^n } - (\nabla \underline{u}_1 \cdot \mathbf{n},  \underbrace{w_1}_{ = 0  } )_{ \Sigma^n } \right\}  \\ 
& = \sum\limits_{n=0}^{N-1} (\partial_t \underline{u}_1 - \underline{u}_2, \partial_t w_1)_{ Q^n } -  \sum\limits_{n=1}^{N-1} ( \jump{\underline{u}_2^n}, w_{1,+}^n  )_{\Omega} 
- A[\underline{\mathbf{U}_h},  \mathbf{W} ].  
\end{align*}
We treat the terms separately. Clearly,    
\[
\sum\limits_{n=0}^{N-1} (\partial_t \underline{u}_1 - \underline{u}_2, \partial_t w_1)_{ Q^n } 
\leq \underline{I}_0( \underline{\mathbf{U}}_h, \underline{\mathbf{U}}_h )^{1/2} \norm{\partial_t w_1  }_{\STdom} 
\leq C \abs{  \underline{\mathbf{U}}_h   }_{ S_h } \norm{w}_{H^1(\STdom)}. 
\]
To cope with the remaining term we insert $ \mathbf{\Pi}_h^{\ast}  \mathbf{W} = (  \Pi_h^{\ast} w,0) $ so that 
\bel{eq:disc_bil_splitting}
A[\underline{\mathbf{U}}_h, \mathbf{W} ] = A[\underline{\mathbf{U}}_h, \mathbf{W} -  \mathbf{\Pi}_h^{\ast} \mathbf{W} ] + A[ \underline{\mathbf{U}}_h, \mathbf{\Pi}_h^{\ast} \mathbf{W}]. 
\ee
From \cref{lem:bfi_stab_est} (a) and \cref{lem:interp-slabwise} (a)  we obtain
\begin{align*}
\underline{A}[\underline{\mathbf{U}}_h, \mathbf{W} - \mathbf{\Pi}_h^{\ast} \mathbf{W} ] 
	& \leq C \abs{ \underline{\mathbf{U}}_h }_{ S_h } \bigg\{ \sum\limits_{n=0}^{N-1} \frac{1}{h^2} \norm{ w_1 - \Pi_h^{\ast} w_1 }_{Q^n}^2 + \norm{ \nabla (w_1 - \Pi_h^{\ast} w_1 )}_{Q^n}^2  \bigg\}^{1/2} \\
 & \leq C  \abs{ \underline{\mathbf{U}}_h }_{ S_h } \norm{w}_{H^1(\STdom)}.
\end{align*}
For the other contribution we obtain from \cref{eq:optimality_dual} and \cref{lem:interp_in_stab} (c) that 
\[
A[\underline{\mathbf{U}}_h,  \mathbf{\Pi}_h^{\ast} \mathbf{W} ] = \underline{S}^{\ast} ( \mathbf{\Pi}_h^{\ast} \mathbf{W}, \underline{\mathbf{Z}}_h) 
\leq C \norm{ \mathbf{\Pi}_h^{\ast} \mathbf{W}}_{ S^{\ast}_h }  \norm{ \underline{\mathbf{Z}}_h }_{ S^{\ast}_h } 
\leq C  \norm{ \underline{\mathbf{Z}}_h }_{ S^{\ast}_h } \norm{w}_{H^1(\STdom)}.
\]
The claim follows in view of  
\[
 \abs{ \underline{\mathbf{U}}_h }_{ S_h } + \norm{ \underline{\mathbf{Z}}_h }_{ S^{\ast}_h } 
\leq \tnorm{ ( \underline{\mathbf{U}}_h - \mathbf{\Pi}_h \mathbf{U} ,\underline{\mathbf{Z}}_h )   }_h + \abs{ \mathbf{\Pi}_h \mathbf{U} }_{ S_h}. 
\]
\end{proof}

\section{Error analysis for the semi-discrete method}\label{section:error-analysis-semi-discrete}
In this section we derive an error estimate for the semi-discrete method defined in \cref{eq:opt_compact_semi}. Thanks to the auxiliary results derived in \cref{ssection:stab-interp}-
\cref{ssection:consistency-res} and the fact that no jumps over time slice boundaries occur, we easily obtain the desired result. First we deduce convergence of the discretization error in the triple norm $\tnorm{ (\cdot , \cdot ) }_h$. 
\begin{lemma}\label{lem:triple_norm_semi_disc_conv}
Let $u$ be a sufficiently regular solution of \cref{eq:PDE+lateralBc} and set $\mathbf{U} := (u,\partial_t u) $.
Let  $ (\underline{\mathbf{U}}_h ,\underline{\mathbf{Z}}_h) \in  \ProdSemiDiscrSpace{k} \times \ProdSemiDiscrSpace{k_{\ast}}  $ 
be the solution of \cref{eq:opt_compact_semi}. Then 
\[
\tnorm{ (\underline{\mathbf{U}}_h - \mathbf{\Pi}_h \mathbf{U} , \underline{\mathbf{Z}}_h ) }_h \leq C h^{k}  \norm{u}_{ H^{k+2}(\STdom)  }. 
\]
\end{lemma}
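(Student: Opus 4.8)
The plan is to run a standard inf-sup plus Galerkin-orthogonality argument (a Strang-type estimate), exploiting that in the semi-discrete setting the relevant bilinear form is exactly $B_h$ and no temporal jump terms are present. First I would apply the discrete inf-sup stability \eqref{eq:inf-sup} to the pair $(\underline{\mathbf{U}}_h - \mathbf{\Pi}_h \mathbf{U}, \underline{\mathbf{Z}}_h) \in \ProdSemiDiscrSpace{k} \times \ProdSemiDiscrSpace{k_{\ast}}$, which gives
\[
C \tnorm{ (\underline{\mathbf{U}}_h - \mathbf{\Pi}_h \mathbf{U}, \underline{\mathbf{Z}}_h) }_h \leq \sup_{(\underline{\mathbf{W}}_h, \underline{\mathbf{Y}}_h)} \frac{ B_h[(\underline{\mathbf{U}}_h - \mathbf{\Pi}_h \mathbf{U}, \underline{\mathbf{Z}}_h), (\underline{\mathbf{W}}_h, \underline{\mathbf{Y}}_h)] }{ \tnorm{ (\underline{\mathbf{W}}_h, \underline{\mathbf{Y}}_h) }_h }.
\]
It then suffices to bound the numerator, for an arbitrary test pair $(\underline{\mathbf{W}}_h, \underline{\mathbf{Y}}_h)$, by $C h^{k} \norm{u}_{H^{k+2}(\STdom)} \tnorm{(\underline{\mathbf{W}}_h, \underline{\mathbf{Y}}_h)}_h$.

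Next I would expand the numerator by bilinearity as $B_h[(\underline{\mathbf{U}}_h, \underline{\mathbf{Z}}_h), \cdot] - B_h[(\mathbf{\Pi}_h \mathbf{U}, 0), \cdot]$. The first piece equals the data term $(u_{\omega}, \underline{w}_1)_{\STdata}$ by the discrete equation \eqref{eq:opt_compact_semi}, and since $u_{\omega}$ coincides with $u$ on $\STdata$ this is $(u, \underline{w}_1)_{\STdata}$. For the second piece I would read off from the definition \eqref{eq:complete_bfi_def_semi} of $B_h$ that inserting the dual argument $0$ annihilates both $A[\underline{\mathbf{W}}_h, \cdot]$ and $S_h^{\ast}(\underline{\mathbf{Y}}_h, \cdot)$ by linearity, leaving exactly $(\Pi_h u, \underline{w}_1)_{\STdata} + S_h(\mathbf{\Pi}_h \mathbf{U}, \underline{\mathbf{W}}_h) + A[\mathbf{\Pi}_h \mathbf{U}, \underline{\mathbf{Y}}_h]$. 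Subtracting, the numerator collapses precisely to
\[
(u - \Pi_h u, \underline{w}_1)_{\STdata} - S_h(\mathbf{\Pi}_h \mathbf{U}, \underline{\mathbf{W}}_h) - A[\mathbf{\Pi}_h \mathbf{U}, \underline{\mathbf{Y}}_h],
\]
which is the left-hand side of \Cref{lem:consistency-bound}.

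Finally I would invoke \Cref{lem:consistency-bound} directly, which bounds this quantity by $C h^{s} \norm{u}_{H^m(\STdom)} \tnorm{(\underline{\mathbf{W}}_h, \underline{\mathbf{Y}}_h)}_h$. Specializing to the semi-discrete interpolant $\Pi_h = \Pi_h^k$ yields $(s,m) = (k, k+2)$ via \cref{def:s-m}; dividing out the test norm, taking the supremum, and combining with the inf-sup bound gives the claim. I do not anticipate a genuine obstacle here, since the work has been front-loaded into the norm property and inf-sup bound (\Cref{cor:norm} and \eqref{eq:inf-sup}) and into the consistency estimate (\Cref{lem:consistency-bound}). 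The only point requiring care is the bookkeeping in the second step, namely verifying that testing with the dual component of the error source set to zero reproduces exactly the consistency functional; this is precisely where the absence of inter-slab jump terms keeps the semi-discrete case clean, in contrast to the fully-discrete analysis of \cref{section:error-analysis-fully-discrete}, where the jump stabilization $S^{\uparrow \downarrow}_{\Delta t}$ contributes extra residual contributions that must be absorbed separately.
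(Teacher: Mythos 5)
Your proposal is correct and is essentially identical to the paper's own proof: the paper likewise applies the inf-sup bound \cref{eq:inf-sup} to the pair $(\underline{\mathbf{U}}_h - \mathbf{\Pi}_h \mathbf{U}, \underline{\mathbf{Z}}_h)$, uses the discrete equation \cref{eq:opt_compact_semi} together with bilinearity of $B_h$ to reduce the numerator exactly to $(u - \Pi_h u,\underline{w}_1)_{\STdata} - S_h(\mathbf{\Pi}_h \mathbf{U}, \underline{\mathbf{W}}_h) - A[\mathbf{\Pi}_h \mathbf{U},\underline{\mathbf{Y}}_h]$, and closes with \cref{lem:consistency-bound} and $(s,m)=(k,k+2)$ from \cref{def:s-m}. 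Your explicit bookkeeping of $B_h[(\mathbf{\Pi}_h\mathbf{U},0),\cdot\,]$, including the vanishing of $A[\underline{\mathbf{W}}_h,\cdot\,]$ and $S_h^{\ast}(\underline{\mathbf{Y}}_h,\cdot\,)$ and the identification $u_{\omega}=u$ on $\STdata$, merely spells out what the paper states in a single line.
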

\begin{proof}
From \cref{eq:opt_compact_semi} we have 
\[
B_h[ ( \underline{\mathbf{U}}_h - \mathbf{\Pi}_h \mathbf{U}, \underline{\mathbf{Z}}_h ), ( \underline{\mathbf{W}}_h ,\underline{\mathbf{Y}}_h ) ] 
= (u -  \Pi_h u,\underline{w}_1)_{\STdata} -  S_h(\mathbf{\Pi}_h \mathbf{U}, \underline{\mathbf{W}}_h) -
 A[ \mathbf{\Pi}_h \mathbf{U} , \underline{\mathbf{Y}}_h]
\]
for any $ (\underline{\mathbf{W}}_h ,\underline{\mathbf{Y}}_h) \in  \ProdSemiDiscrSpace{k} \times \ProdSemiDiscrSpace{k_{\ast}} $ 
Hence, the claim follows from \cref{lem:consistency-bound} and the inf-sup condition \cref{eq:inf-sup} for $B_h$. 
\end{proof}
We then improve the former result to obtain convergence in a physically significant norm by utilizing the Lipschitz stability result from \cref{thm:Lipschitz-stab}.
\begin{theorem}\label{thm:error-estimate-semi-discrete}
Let $u$ be a sufficiently regular solution of \cref{eq:PDE+lateralBc} and $ (\underline{\mathbf{U}}_h ,\underline{\mathbf{Z}}_h) \in  \ProdSemiDiscrSpace{k} \times \ProdSemiDiscrSpace{k_{\ast}}  $ be the solution of \cref{eq:opt_compact_semi}. Then 
\[
\norm{ u - \underline{u}_1 }_{L^{\infty}(0,T;L^2(\Omega))} + \norm{\partial_t (u-\underline{u}_1) }_{L^{2}(0,T;H^{-1}(\Omega))} 
\leq C h^{k} \norm{u}_{ H^{k+2}(\STdom)  }.
\]
\end{theorem}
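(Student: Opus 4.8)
The plan is to apply the Lipschitz stability estimate of \cref{thm:Lipschitz-stab} to the error $\phi = u - \underline{u}_1$. Since both $u$ and $\underline{u}_1$ lie in $H^1(\STdom)$, this is admissible, and it reduces the theorem to controlling the three terms on the right-hand side of \cref{eq:Lipschitz-stability}: the data residual $\norm{u - \underline{u}_1}_{\STdata}$, the boundary residual $\norm{u - \underline{u}_1}_{\Sigma}$, and the PDE residual $\norm{\Box(u - \underline{u}_1)}_{H^{-1}(\STdom)}$. I would show that each of these is $\mathcal{O}(h^k \norm{u}_{H^{k+2}(\STdom)})$, after which the assertion is immediate.

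For the data and boundary residuals I would invoke \cref{lem:interp_in_stab} (b), which already bounds $\norm{u - \underline{u}_1}_{\Sigma} + \norm{u - \underline{u}_1}_{\STdata}$ by $C(h^{s+1/2}\norm{u}_{H^m(\STdom)} + \tnorm{(\underline{\mathbf{U}}_h - \mathbf{\Pi}_h \mathbf{U}, 0)}_h)$. In the semi-discrete setting $(s,m) = (k, k+2)$, so the interpolation part is actually of higher order, $h^{k+1/2} \leq C h^k$. The remaining triple-norm contribution is dominated by $\tnorm{(\underline{\mathbf{U}}_h - \mathbf{\Pi}_h \mathbf{U}, \underline{\mathbf{Z}}_h)}_h$ (the omitted dual semi-norm only adds a nonnegative term), which by \cref{lem:triple_norm_semi_disc_conv} is $\leq C h^k \norm{u}_{H^{k+2}(\STdom)}$. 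This settles the first two terms.

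The heart of the argument is the PDE residual. Since $u$ solves \cref{eq:PDE+lateralBc}, one has $\Box u = 0$, hence $\Box(u - \underline{u}_1) = -\Box \underline{u}_1$, and it suffices to bound $\norm{\Box \underline{u}_1}_{H^{-1}(\STdom)}$. Writing this dual norm, as in \cref{lem:IP-norm}, as a supremum over $w \in H^1_0(\STdom)$ with $\norm{w}_{H^1(\STdom)} = 1$ of (up to sign) $\sum_{n} \{(\partial_t \underline{u}_1, \partial_t w)_{Q^n} - a(\underline{u}_1, w)_{Q^n}\}$, I would apply \cref{lem:res-bound} with $\mathbf{W} = (w,0)$. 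The hypothesis \cref{eq:optimality_dual} required there is exactly the second line of the optimality system \cref{eq:optimality_cond_semi}, evaluated at the admissible discrete test function $\mathbf{\Pi}_h^{\ast}\mathbf{W} \in \ProdSemiDiscrSpace{k_{\ast}}$. The crucial simplification is that $\SemiDiscrSpace{k} = H^1(0,T; V_h^k)$ is continuous in time, so every temporal jump $\jump{\underline{u}_2^n}$ vanishes and the leading term $\abs{\sum_{n=1}^{N-1}(\jump{\underline{u}_2^n}, w_{1,+}^n)_{\Omega}}$ in \cref{lem:res-bound} drops out. What remains is controlled by $\tnorm{(\underline{\mathbf{U}}_h - \mathbf{\Pi}_h \mathbf{U}, \underline{\mathbf{Z}}_h)}_h + \abs{\mathbf{\Pi}_h \mathbf{U}}_{S_h}$, whose first factor is $\mathcal{O}(h^k)$ by \cref{lem:triple_norm_semi_disc_conv} and whose second is $\mathcal{O}(h^k)$ by \cref{lem:interp_in_stab} (a). Taking the supremum then gives $\norm{\Box \underline{u}_1}_{H^{-1}(\STdom)} \leq C h^k \norm{u}_{H^{k+2}(\STdom)}$.

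Collecting the three bounds and inserting them into \cref{eq:Lipschitz-stability} yields the claim. The hard part is the residual step: one must recognise that the dual optimality condition is precisely the tool that turns a negative-norm PDE residual into a computable, convergent quantity, and that the time-continuity of the semi-discrete ansatz space is exactly what makes the problematic jump contribution vanish. I expect this vanishing to be what renders the present proof essentially an assembly of the preceding lemmas, while foreshadowing the additional mechanism (the stabilization $S^{\uparrow \downarrow}_{\Delta t}$) that will be needed to dominate these jumps once time continuity is relaxed in the fully-discrete analysis.
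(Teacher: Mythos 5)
Your proposal is correct and follows essentially the same route as the paper's proof: apply \cref{thm:Lipschitz-stab} to $e_h = u - \underline{u}_1$, bound the data and boundary terms via \cref{lem:interp_in_stab} (b) together with \cref{lem:triple_norm_semi_disc_conv}, and control the PDE residual via \cref{lem:res-bound} (whose hypothesis \cref{eq:optimality_dual} is supplied by the second optimality condition in \cref{eq:optimality_cond_semi}), with the jump terms $\jump{\underline{u}_2^n}$ vanishing by time-continuity of $\SemiDiscrSpace{k}$. Your closing observation—that this vanishing is exactly what the stabilization $S^{\uparrow\downarrow}_{\Delta t}$ must replace in the fully-discrete case—also matches the paper's narrative.
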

\begin{proof}
Since $e_h := u - \underline{u}_1 \in H^1(\STdom)$, we can directly apply the stability estimate \cref{eq:Lipschitz-stability} to $\phi = e_h $ to obtain
\begin{align*}
\norm{ e_h }_{L^{\infty}(0,T;L^2(\Omega))} + \norm{\partial_t e_h }_{L^{2}(0,T;H^{-1}(\Omega))} \! 
\leq C \! \left(  \norm{ e_h }_{ \STdata } \! + \! \norm{ e_h }_{ L^2(\Sigma)} \! + \! \norm{ \Box e_h }_{ H^{-1} ( \STdom ) } \right) \!.
\end{align*}
From \cref{lem:interp_in_stab} (b) and \cref{lem:triple_norm_semi_disc_conv} we see that the first two terms on the right hand side are bounded 
by a constant times $ h^{k}  \norm{u}_{ H^{k+2}(\STdom)  }$. The same bound is valid for the last term, which we deduce by writing down the definition
%\begin{align*}
%=   
%=  \sup_{ w \in H^1_0(\STdom), \norm{w}_{H^1(\STdom) } = 1 } \sum\limits_{n=0}^{N-1} \{ (\partial_t \underline{u}_1, \partial_t w_1)_{ Q^n } - a(\underline{u}_1,w_1)_{Q^n} \} 
%\norm{ \Box e_h }_{ H^{-1} ( \STdom ) } = \sup_{\substack{w \in H^1_0(\STdom), \\ \norm{w}_{H^1(\STdom) } = 1 }} (- \partial_t e_h, \partial_t w)_Q + a(e_h,w)_Q 
%=  \sup_{\substack{w \in H^1_0(\STdom), \\ \norm{w}_{H^1(\STdom) } = 1 }} \sum\limits_{n=0}^{N-1} \{ (\partial_t \underline{u}_1, \partial_t w_1)_{ Q^n } - a(\underline{u}_1,w_1)_{Q^n} \}  
%\end{align*}
\begin{align*}
\norm{ \Box e_h }_{ H^{-1} ( \STdom ) } = \norm{ - \Box \underline{u}_1}_{ H^{-1} ( \STdom ) } =  \sup_{\substack{w \in H^1_0(\STdom), \\ \norm{w}_{H^1(\STdom) } = 1 }} \sum\limits_{n=0}^{N-1} \{ (\partial_t \underline{u}_1, \partial_t w_1)_{ Q^n } - a(\underline{u}_1,w_1)_{Q^n} \}  
\end{align*}
and applying \cref{lem:res-bound}\footnote{Note that \cref{eq:optimality_dual} holds because of the first order optimality condition \cref{eq:optimality_cond_semi}.}, \cref{lem:triple_norm_semi_disc_conv}, \cref{lem:interp_in_stab} (a) and the fact that $ \jump{\underline{u}_2^n} = 0$ for $\underline{u}_2 \in \SemiDiscrSpace{k}$ holds. 
\end{proof}

\section{Error analysis for the fully-discrete method}\label{section:error-analysis-fully-discrete}
In this section we derive an error estimate for the fully-discrete method defined in \cref{eq:opt_compact_fully_disc}. In view of the results that have already been established in \cref{section:error-analysis-semi-discrete} for the semi-discrete method, this basically amounts to treating the additional coupling terms that were introduced by adding the term $S^{\uparrow \downarrow}_{\Delta t}$ on top of the semi-discrete Lagrangian.
As indicated in \cref{def:s-m} we will set $(s,m) = ( \min\{k,q\}, \max\{k,q\}+3)$ and hold on to the assumption $\Delta t = C h$ in this section. \par 
First we require some additional interpolation results at the boundary of the time slices whose proof can be found in the appendix.
\begin{lemma}[Approximation bounds at fixed time levels]\label{lem:approx-fixed-time-lvl}
At the top and bottom of the time slice the following approximation bounds for the traces holds true for $n=0,\ldots,N-1$:
\begin{enumerate}[label=(\alph*)]
\item  $ \norm{(u - \Pi_h^{\ast} u )( \tau, \cdot )}_{\Omega} \leq C h^{1/2} \norm{u}_{H^{1}(Q^n)},  \; \tau \in \{ t_n,t_{n+1}\}$,
\item  $ \norm{(u - \Pi_h u )( \tau, \cdot )}_{\Omega} \leq C h^{s+1/2} \norm{u}_{H^{m-2}(Q^n)},  \; \tau \in \{ t_n,t_{n+1}\}$,
\item $  \norm{\nabla(u - \Pi_h u )( \tau, \cdot )}_{\Omega} \leq C h^{s-1/2} \norm{u}_{H^{m-1}(Q^n)},  \; \tau \in \{ t_n,t_{n+1}\}$.
\end{enumerate}
\end{lemma}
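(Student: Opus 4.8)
The plan is to reduce everything to the one-dimensional trace inequality in the time direction. For an interval $I_n$ of length $\Delta t$ and any $w \in H^1(I_n)$ one has, after scaling to the reference interval $(0,1)$ and invoking the Sobolev trace embedding there,
\[
\abs{w(\tau)}^2 \leq C\left( \Delta t^{-1} \norm{w}_{L^2(I_n)}^2 + \Delta t\, \norm{\partial_t w}_{L^2(I_n)}^2 \right), \qquad \tau \in \{t_n, t_{n+1}\}.
\]
Applying this fibre-wise for almost every $x \in \Omega$ (with $\nabla$ substituted for the identity in part (c), since the spatial gradient commutes with the temporal trace) and integrating over $\Omega$ gives, for sufficiently regular $w$,
\[
\norm{w(\tau,\cdot)}_{\Omega}^2 \leq C\left( \Delta t^{-1} \norm{w}_{Q^n}^2 + \Delta t\, \norm{\partial_t w}_{Q^n}^2 \right),
\]
and likewise with $w$ replaced by $\nabla w$. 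Using the standing assumption $\Delta t = Ch$ I would turn every factor of $\Delta t$ into a factor of $h$, so that the whole proof reduces to inserting slab-local interpolation bounds for $w = u - \Pi_h^{\ast} u$ and $w = u - \Pi_h u$.

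For (a) I take $w = u - \Pi_h^{\ast} u$ and combine the slab-local counterpart of \cref{lem:interp-slabwise}(a), which yields $\norm{u - \Pi_h^{\ast} u}_{Q^n} \leq Ch\,\norm{u}_{H^1(Q^n)}$, with the $H^1$-stability in time of the interpolant, which yields $\norm{\partial_t(u - \Pi_h^{\ast} u)}_{Q^n} \leq C\norm{u}_{H^1(Q^n)}$; the trace inequality then produces $h^{-1}\cdot h^{2} + h\cdot 1 \leq Ch$ inside the square root, i.e. the rate $h^{1/2}$. For (b) and (c) I take $w = u - \Pi_h u$ and feed in the slab-local interpolation estimates at their sharp orders: for (b), $\norm{u - \Pi_h u}_{Q^n} \leq Ch^{s+1}\norm{u}_{H^{m-2}(Q^n)}$ and $\norm{\partial_t(u - \Pi_h u)}_{Q^n} \leq Ch^{s}\norm{u}_{H^{m-2}(Q^n)}$, so the two contributions balance as $h^{-1}\cdot h^{2(s+1)} + h\cdot h^{2s} = 2h^{2s+1}$ and the root gives $h^{s+1/2}$; for (c), the seminorm bound $\norm{\nabla(u - \Pi_h u)}_{Q^n} \leq Ch^{s}\norm{u}_{H^{m-1}(Q^n)}$ together with the mixed bound $\norm{\partial_t \nabla(u - \Pi_h u)}_{Q^n} \leq Ch^{s-1}\norm{u}_{H^{m-1}(Q^n)}$ give $h^{-1}\cdot h^{2s} + h\cdot h^{2s-2} = 2h^{2s-1}$, hence $h^{s-1/2}$. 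That the reduced Sobolev indices $H^{m-2}$ and $H^{m-1}$ are admissible is precisely where the slack built into \cref{def:s-m} is used: since here $(s,m) = (\min\{k,q\}, \max\{k,q\}+3)$ one has $s+1 \leq m-2 \leq m-1$, so the regularity demanded by these $L^2$- and $H^1$-type interpolation bounds never exceeds the order available on the right-hand side.

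I expect the main obstacle to be the two estimates that are not literal slab-local restrictions of \cref{lem:interp-slabwise}: the $H^1$-in-time stability $\norm{\partial_t(u-\Pi_h^{\ast}u)}_{Q^n} \leq C\norm{u}_{H^1(Q^n)}$ needed for (a), and especially the mixed space-time second-derivative bound $\norm{\partial_t\nabla(u-\Pi_h u)}_{Q^n} \leq Ch^{s-1}\norm{u}_{H^{m-1}(Q^n)}$ needed for (c). Neither follows from the isotropic-looking statements already recorded; both rely on the tensor-product construction of $\Pi_{h, \Delta t}^{k,q}$ as a composition of a temporal and a spatial interpolant, so that $\partial_t$ and $\nabla$ commute with the respective factors and the anisotropic error can be split and estimated with the correct powers of $h$ and $\Delta t = Ch$. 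I would therefore establish these two anisotropic bounds first, which is natural within the appendix where the operators are defined, after which parts (a)--(c) follow uniformly by the trace-inequality argument above.
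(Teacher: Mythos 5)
Your argument is correct and the rates all check out, but it is organized differently from the paper's proof. The paper (in \cref{Lemma: space-time interpolation error estimate at fixed time level}) first splits the error via the tensor structure, $u-\pi^{q,k}_n u = \mathcal{L}^q u + \pi^q_n\mathcal{L}^k u$, and then treats the two pieces with \emph{different} tools: the second piece is a polynomial in time, so its trace is controlled by the discrete inverse inequality \cref{Lemma:Discrete inverse inequality in time} at cost $\Delta t^{-1/2}$, combined with the purely spatial bounds of \cref{Lemma:Stability and error estimates spatial interpolation}; only the first piece $\mathcal{L}^q u$ is handled by the continuity of the time trace in $H^{1,0}$ on the reference interval, which is exactly your fibre-wise one-dimensional trace inequality. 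You instead apply that trace inequality to the \emph{whole} error, which is why you need the two auxiliary bounds you flag: the $H^1$-in-time stability for (a) and the mixed bound $\norm{\partial_t\nabla(u-\Pi_h u)}_{Q^n}\leq C h^{s-1}\norm{u}_{H^{m-1}(Q^n)}$ for (c). The first of these is in fact already recorded: the $H^1$-stability in \cref{Thm:Erorror bounds for space-time interpolation on time slab} (a) gives $\norm{\partial_t(u-\pi^{q,k}_n u)}\leq C\norm{u}_{H^1}$ by the triangle inequality. The second is true but must be proved, and when you do so via the commutation results \cref{Lemma:Temporal interpolation commute with spatial derivatives} and \cref{Lemma:Spatial interpolation commute with temporal derivatives} you are led back to precisely the splitting the paper starts from, with the $\partial_t$-stability of $\pi^q_n$ from \cref{Lemma:Stability and approximation L^2 projection in time} (b) replacing the paper's inverse inequality; one must also track the anisotropic indices carefully, since for $k=q$ the isotropic statement of \cref{Thm:Erorror bounds for space-time interpolation on time slab} (c) would demand $H^{m-1}$ rather than the $H^{m-2}$ claimed in part (b), and only the anisotropic version (one time derivative, $s$ spatial derivatives) stays within budget -- your remark about exploiting the tensor-product construction is therefore essential, not optional. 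What each route buys: the paper's decomposition avoids any mixed space-time derivative bounds on the projected spatial error, at the price of invoking the discrete inverse inequality; yours is conceptually more uniform (one trace inequality serves all three parts) but requires enlarging the recorded interpolation toolbox. One last technical point: the slab-local interpolation estimates you cite are proved on the extended slabs $\hat{Q}^n$ and transferred to $Q^n$ through the Sobolev extension $\mathcal{E}$, so your trace argument should be run on $\hat{Q}^n$ (or the error bounds invoked in their extended form) exactly as the paper does at the end of \cref{appendix:space-time-interp}.
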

We then deduce the counterpart of \cref{lem:interp_in_stab} (a) for the additional stabilization term. 
\begin{lemma}\label{lem:interp-in-jump-over-time-slice-bnd}
Let $u$ be a sufficiently regular solution of \cref{eq:PDE+lateralBc} and set $\mathbf{U} := (u,\partial_t u) $. Then 
\[
\abs{ \mathbf{\Pi}_h \mathbf{U} }_{ \uparrow \downarrow } \leq C  h^{s} \norm{u}_{H^{m-1}(\STdom)}.
\]
\end{lemma}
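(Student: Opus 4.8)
The plan is to expand the squared seminorm $\abs{\mathbf{\Pi}_h \mathbf{U}}_{\uparrow \downarrow}^2 = S^{\uparrow\downarrow}_{\Delta t}(\mathbf{\Pi}_h \mathbf{U}, \mathbf{\Pi}_h \mathbf{U})$ into its three elementary contributions and estimate each separately. Recalling the definition of $S^{\uparrow\downarrow}_{\Delta t}$ together with $\underline{I}_1^{\uparrow\downarrow}$, $\underline{I}_2^{\uparrow\downarrow}$, and that $\mathbf{\Pi}_h \mathbf{U} = (\Pi_h u, \Pi_h \partial_t u)$, these are
\[
\sum_{n=1}^{N-1} \frac{1}{\Delta t} \norm{\jump{(\Pi_h u)^n}}_{\Omega}^2, \quad \sum_{n=1}^{N-1} \Delta t \, \norm{\jump{\nabla(\Pi_h u)^n}}_{\Omega}^2, \quad \sum_{n=1}^{N-1} \frac{1}{\Delta t} \norm{\jump{(\Pi_h \partial_t u)^n}}_{\Omega}^2.
\]
The crucial observation is that a sufficiently regular exact solution $u$ is continuous in time across each interface $t_n$, and so are $\nabla u$ and $\partial_t u$; hence $\jump{u^n} = \jump{\nabla u^n} = \jump{\partial_t u^n} = 0$. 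Consequently the jump of each interpolant coincides with the jump of the corresponding interpolation error, e.g.\ $\jump{(\Pi_h u)^n} = \jump{(\Pi_h u - u)^n}$, and can be bounded by the sum of the two one-sided traces of the interpolation error taken from the slabs $Q^{n-1}$ and $Q^n$ meeting at $t_n$.

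With this reduction in hand, I would apply the fixed-time-level approximation bounds of \cref{lem:approx-fixed-time-lvl}. For the first and third contributions I use part (b), giving $\norm{(\Pi_h u - u)^n}_{\Omega} \leq C h^{s+1/2} \norm{u}_{H^{m-2}(Q^n)}$ and the analogous bound with $\partial_t u$ in place of $u$. Squaring, adding the two one-sided traces, and multiplying by the weight $1/\Delta t \sim h^{-1}$ (using the regime $\Delta t = C h$) converts the $h^{2s+1}$ carried by the squared trace into the desired $h^{2s}$. For the second contribution I use part (c), namely $\norm{\nabla(\Pi_h u - u)^n}_{\Omega} \leq C h^{s-1/2}\norm{u}_{H^{m-1}(Q^n)}$; here the squared trace carries $h^{2s-1}$, and the prefactor $\Delta t \sim h$ again restores $h^{2s}$. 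The $1/\Delta t$ and $\Delta t$ weights in the definition of $S^{\uparrow\downarrow}_{\Delta t}$ are thus exactly calibrated so that all three terms scale like $h^{2s}$.

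Finally I would sum over $n = 1, \ldots, N-1$. Since the slabs partition $\STdom$, the slab-wise norms $\norm{\cdot}_{H^{m-2}(Q^n)}^2$ and $\norm{\cdot}_{H^{m-1}(Q^n)}^2$ sum to the corresponding global norms over $\STdom$, each interior interface contributing to at most two neighbouring slabs and producing only a harmless factor two. Using $\norm{\partial_t u}_{H^{m-2}(\STdom)} \leq \norm{u}_{H^{m-1}(\STdom)}$ and $\norm{u}_{H^{m-2}(\STdom)} \leq \norm{u}_{H^{m-1}(\STdom)}$, all three contributions are bounded by $C h^{2s} \norm{u}_{H^{m-1}(\STdom)}^2$, and taking square roots yields the claim. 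I do not expect a serious obstacle here; the only point requiring care is the bookkeeping of powers of $h$ — verifying that the gradient jump (which loses half a power in (c)) is compensated by its $\Delta t$ weight while the function and velocity jumps (which gain half a power in (b)) are scaled by $1/\Delta t$ — together with checking that the velocity component $\partial_t u$ raises the required regularity only to $H^{m-1}$, exactly as the statement asserts.
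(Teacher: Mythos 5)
Your proposal is correct and follows essentially the same route as the paper's proof: both use smoothness of $u$ to replace each interpolant jump by the jump of the interpolation error, bound it by the two one-sided traces from the adjacent slabs, and then invoke \cref{lem:approx-fixed-time-lvl} (b) for the displacement and velocity jumps and (c) for the gradient jump, with the weights $1/\Delta t \sim h^{-1}$ and $\Delta t \sim h$ restoring the rate $h^{2s}$ exactly as you describe. Your bookkeeping of the regularity indices (bounding $\norm{\partial_t u}_{H^{m-2}(\STdom)}$ and $\norm{u}_{H^{m-2}(\STdom)}$ by $\norm{u}_{H^{m-1}(\STdom)}$) also matches the paper, so there is nothing to add.
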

\begin{proof}
By using \cref{lem:approx-fixed-time-lvl} and smoothness of $u$ we obtain 
\begin{align*}
& I_2^{\uparrow \downarrow}(  \mathbf{\Pi}_h \mathbf{U}, \mathbf{\Pi}_h \mathbf{U}) = \sum\limits_{n=1}^{N-1} \frac{1}{\Delta t} \norm{ \jump{ (\Pi_h \partial_t u - \partial_t u)^n  } }_{\Omega}^2 \\ 
& \leq \frac{2}{\Delta t } \left(  \sum\limits_{n=1}^{N-1} \norm{  (\Pi_h \partial_t u - \partial_t u)(t_n,\cdot)   }_{\Omega}^2 + \sum\limits_{n=0}^{N-2} \norm{  (\Pi_h \partial_t u - \partial_t u)(t_{n+1},\cdot)   }_{\Omega}^2    \right) \\ 
& \leq C h^{2s} \norm{u}_{H^{m-1}(\STdom)}^2.
\end{align*}
In a similar way we bound the contributions in $I_1$, e.g.\
\begin{align*}
& \sum\limits_{n=1}^{N-1} \Delta t \norm{ \jump{ \nabla (\Pi_h u - u)^n  } }_{\Omega}^2  \leq 2 \Delta t \Big(  \sum\limits_{n=1}^{N-1} \norm{ \nabla (\Pi_h u - u)(t_n,\cdot)   }_{\Omega}^2 \\ 
& + \sum\limits_{n=0}^{N-2} \norm{ \nabla (\Pi_h u - u)(t_{n+1},\cdot)   }_{\Omega}^2  \Big) 
 \leq C h^{2s} \norm{u}_{H^{m-1}(\STdom)}^2.
\end{align*}
\end{proof}
With this result, convergence of the discretization error in the strengthened triple norm $\tnorm{(\cdot,\cdot)}$ immediately follows.
\begin{lemma}\label{lem:triple_norm_fully_disc_conv}
Let $u$ be a sufficiently regular solution of \cref{eq:PDE+lateralBc} and set $\mathbf{U} := (u,\partial_t u) $.
Let  $ (\underline{\mathbf{U}}_h ,\underline{\mathbf{Z}}_h) \in \ProdFullyDiscrSpace{k}{q} \times \ProdFullyDiscrSpace{ k_{\ast} }{ q_{\ast} } $ 
be the solution of \cref{eq:opt_compact_fully_disc}. Then 
\[
\tnorm{ (\underline{\mathbf{U}}_h - \mathbf{\Pi}_h \mathbf{U} , \underline{\mathbf{Z}}_h ) } \leq C h^{s}  \norm{u}_{ H^{m}(\STdom)  }. 
\]
\end{lemma}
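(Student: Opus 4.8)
The plan is to follow the proof of \cref{lem:triple_norm_semi_disc_conv} almost verbatim, now working with the augmented bilinear form $B$ from \cref{eq:complete_bfi_def_fully_discr} and the full triple norm $\tnorm{\cdot}$ in place of $B_h$ and $\tnorm{\cdot}_h$. First I would exploit the fully-discrete optimality condition \cref{eq:opt_compact_fully_disc}, the data constraint $u = u_{\omega}$ on $\STdata$, and linearity of $B$ in its first argument to obtain, for every test pair $(\underline{\mathbf{W}}_h, \underline{\mathbf{Y}}_h) \in \ProdFullyDiscrSpace{k}{q} \times \ProdFullyDiscrSpace{k_{\ast}}{q_{\ast}}$, the error representation
\[
B[(\underline{\mathbf{U}}_h - \mathbf{\Pi}_h \mathbf{U}, \underline{\mathbf{Z}}_h), (\underline{\mathbf{W}}_h, \underline{\mathbf{Y}}_h)] = (u - \Pi_h u, \underline{w}_1)_{\STdata} - S_h(\mathbf{\Pi}_h \mathbf{U}, \underline{\mathbf{W}}_h) - A[\mathbf{\Pi}_h \mathbf{U}, \underline{\mathbf{Y}}_h] - S^{\uparrow \downarrow}_{\Delta t}(\mathbf{\Pi}_h \mathbf{U}, \underline{\mathbf{W}}_h),
\]
where inserting the test argument $(\mathbf{\Pi}_h \mathbf{U}, 0)$ annihilates the terms $A[\underline{\mathbf{W}}_h, 0]$ and $S_h^{\ast}(\underline{\mathbf{Y}}_h, 0)$.

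The first three contributions on the right are precisely those already treated in the semi-discrete case, so I would bound them directly by \cref{lem:consistency-bound} with $C h^s \norm{u}_{H^m(\STdom)} \tnorm{(\underline{\mathbf{W}}_h, \underline{\mathbf{Y}}_h)}_h$, and since $\tnorm{\cdot}_h \leq \tnorm{\cdot}$ this is in turn controlled by $C h^s \norm{u}_{H^m(\STdom)} \tnorm{(\underline{\mathbf{W}}_h, \underline{\mathbf{Y}}_h)}$. The only genuinely new term is the jump-in-time penalty. Since $S^{\uparrow \downarrow}_{\Delta t}$ is symmetric and positive semi-definite, Cauchy--Schwarz gives $S^{\uparrow \downarrow}_{\Delta t}(\mathbf{\Pi}_h \mathbf{U}, \underline{\mathbf{W}}_h) \leq \abs{\mathbf{\Pi}_h \mathbf{U}}_{\uparrow \downarrow} \, \abs{\underline{\mathbf{W}}_h}_{\uparrow \downarrow}$; I would bound the first factor by $C h^s \norm{u}_{H^{m-1}(\STdom)}$ using \cref{lem:interp-in-jump-over-time-slice-bnd} and the second factor by $\tnorm{(\underline{\mathbf{W}}_h, \underline{\mathbf{Y}}_h)}$ directly from the definition of the full triple norm.

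Collecting these estimates yields $B[(\underline{\mathbf{U}}_h - \mathbf{\Pi}_h \mathbf{U}, \underline{\mathbf{Z}}_h), (\underline{\mathbf{W}}_h, \underline{\mathbf{Y}}_h)] \leq C h^s \norm{u}_{H^m(\STdom)} \tnorm{(\underline{\mathbf{W}}_h, \underline{\mathbf{Y}}_h)}$, so dividing by the triple norm of the test pair, taking the supremum, and invoking the fully-discrete inf-sup bound (the second line of \cref{eq:inf-sup}) closes the argument. I do not anticipate a serious obstacle, since all the analytical effort has been front-loaded into the consistency bound and the interpolation estimate for the time-jump semi-norm. The one point deserving care is the consistency underlying the control of $\abs{\mathbf{\Pi}_h \mathbf{U}}_{\uparrow \downarrow}$: because the exact solution is continuous in time, its interface jumps $\jump{u^n}$ and $\jump{\partial_t u^n}$ vanish, so the penalty of the interpolant equals the penalty of the interpolation error, which is exactly what makes \cref{lem:interp-in-jump-over-time-slice-bnd} deliver the $h^s$ rate rather than an $O(1)$ defect.
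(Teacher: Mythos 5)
Your proposal is correct and follows essentially the same route as the paper's own proof: the identical error representation $B[(\underline{\mathbf{U}}_h - \mathbf{\Pi}_h \mathbf{U}, \underline{\mathbf{Z}}_h), (\underline{\mathbf{W}}_h, \underline{\mathbf{Y}}_h)]$ with the semi-discrete terms handled by \cref{lem:consistency-bound}, the new jump term bounded via Cauchy--Schwarz and \cref{lem:interp-in-jump-over-time-slice-bnd}, and the conclusion drawn from the fully-discrete inf-sup bound in \cref{eq:inf-sup}. Your closing observation---that time-continuity of $u$ makes $\abs{\mathbf{\Pi}_h \mathbf{U}}_{\uparrow \downarrow}$ equal to the jump semi-norm of the interpolation error---is exactly the mechanism underlying the paper's interpolation lemma, so no gap remains.
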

\begin{proof}
From \cref{eq:complete_bfi_def_fully_discr} we obtain 
\begin{align*}
B[ ( \underline{\mathbf{U}}_h - \mathbf{\Pi}_h \mathbf{U}, \underline{\mathbf{Z}}_h ), ( \underline{\mathbf{W}}_h ,\underline{\mathbf{Y}}_h ) ] 
= &  (u -  \Pi_h u,\underline{w}_1)_{\STdata} - S_h(\mathbf{\Pi}_h \mathbf{U}, \underline{\mathbf{W}}_h) -
 A[ \mathbf{\Pi}_h \mathbf{U} , \underline{\mathbf{Y}}_h] \\
	& \!\! - S^{\uparrow \downarrow}_{\Delta t}( \mathbf{\Pi}_h \mathbf{U} , \underline{\mathbf{W}}_h ).
\end{align*}
Since $S^{\uparrow \downarrow}_{\Delta t}( \mathbf{\Pi}_h \mathbf{U} , \underline{\mathbf{W}}_h ) \leq C h^{s} \norm{u}_{H^{m-1}(\STdom)} \abs{ \underline{\mathbf{W}}_h }_{ \uparrow \downarrow }  $ thanks to \cref{lem:interp-in-jump-over-time-slice-bnd}, the claim follows as in \cref{lem:triple_norm_semi_disc_conv}. 
\end{proof}
In the semi-discrete case we were able to apply the Lipschitz stability result \cref{eq:Lipschitz-stability} directly to the error $u - \underline{u}_1$ to deduce our error estimate. In the fully-discrete case, this is no longer possible since $\underline{u}_1 \notin H^1(\STdom)$. To address this issue, we follow \cite{BDE23} and introduce a lifting operator $L_{\Delta t}:\FullyDiscrSpace{k}{q} \rightarrow C^0((0,T);V_h^k)$ as follows. 
Let $\vartheta_{n}(t) := (t_{n+1} - t)/\Delta t$  for $ t \in I_n$ and set 
\begin{equation}\label{eq:def_lifting}
L_{\Delta t} \underline{w}(t) = \underline{w}(t) - \jump{ \underline{w}^n }  \vartheta_{n}(t), \quad  t \in I_n, n \geq 1, \quad
	L_{\Delta t} \underline{w}(t) = \underline{w}(t),  \quad t \in I_0.
\end{equation}
Note that $\vartheta_{n}(t_n) = 1$ and $ \vartheta_{n}(t_{n+1} ) =  0$,  
which implies 
\[
\lim_{ t \downarrow t_n} L_{\Delta t} \underline{w}(t)  
= \underline{w}_{+}^{n} - \jump{ \underline{w}^n }
= \underline{w}_{-}^{n} 
= \lim_{ t \uparrow t_n} L_{\Delta t} \underline{w}(t).
\]
Thus, $ L_{\Delta t} \underline{w}$ is continuous in time. For later purposes we also record that 
\begin{equation}\label{eq:theta-and-deriv-L2-norm}
\norm{ \vartheta_{n}}_{ L^2(I_n) } = ( \Delta t)^{1/2}/\sqrt{3},
\qquad 
\norm{ \vartheta_{n}^{\prime} }_{ L^2(I_n) } =  ( \Delta t)^{-1/2}. 
\end{equation}
For the smoothed solution $L_{\Delta t} \underline{u}_1$ convergence rates follow similarly as in \cref{thm:error-estimate-semi-discrete}, where the additional jump terms induced by the lifting \cref{eq:def_lifting} are covered by the stabilization term $S^{\uparrow \downarrow}_{\Delta t}  $. Note that the convergence order is independent of the polynomial order of the dual variable. 
\begin{theorem}\label{thm:error-estimate-fully-discrete}
Let $u$ be a sufficiently regular solution of \cref{eq:PDE+lateralBc} and $ (\underline{\mathbf{U}}_h ,\underline{\mathbf{Z}}_h) \in \ProdFullyDiscrSpace{k}{q} \times \ProdFullyDiscrSpace{ k_{\ast} }{ q_{\ast} } $ be the solution of \cref{eq:opt_compact_fully_disc}. 
Then for $(s,m) = ( \min\{k,q\}, \max\{k,q\}+3)$ it holds
\[
\norm{ u - L_{\Delta t} \underline{u}_1 }_{L^{\infty}(0,T;L^2(\Omega))} + \norm{\partial_t (u - L_{\Delta t} \underline{u}_1) }_{L^{2}(0,T;H^{-1}(\Omega))} 
\leq C h^{s} \norm{u}_{ H^{m}(\STdom)  }.
\]
\end{theorem}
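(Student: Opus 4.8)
The plan is to apply the Lipschitz stability estimate \cref{eq:Lipschitz-stability} to $\phi := u - L_{\Delta t}\underline{u}_1$. This is legitimate precisely because the lifting restores temporal continuity, so $L_{\Delta t}\underline{u}_1 \in C^0((0,T);V_h^k)\subset H^1(\STdom)$ and hence $\phi\in H^1(\STdom)$ — this is the whole purpose of introducing $L_{\Delta t}$. It then remains to bound $\norm{\phi}_{\STdata}$, $\norm{\phi}_{L^2(\Sigma)}$ and $\norm{\Box\phi}_{H^{-1}(\STdom)}$ by $Ch^s\norm{u}_{H^m(\STdom)}$. Throughout I would use the decomposition $\phi = (u-\underline{u}_1) + (\underline{u}_1 - L_{\Delta t}\underline{u}_1)$, where by \cref{eq:def_lifting} the second piece equals $\jump{\underline{u}_1^n}\vartheta_n$ on each $I_n$, $n\geq 1$, and vanishes on $I_0$. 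The workhorse for all jump contributions is $\abs{\underline{\mathbf{U}}_h}_{\uparrow\downarrow}$, which by \cref{lem:triple_norm_fully_disc_conv} and \cref{lem:interp-in-jump-over-time-slice-bnd} satisfies $\abs{\underline{\mathbf{U}}_h}_{\uparrow\downarrow}\leq \tnorm{(\underline{\mathbf{U}}_h-\mathbf{\Pi}_h\mathbf{U},\underline{\mathbf{Z}}_h)} + \abs{\mathbf{\Pi}_h\mathbf{U}}_{\uparrow\downarrow}\leq Ch^s\norm{u}_{H^m(\STdom)}$.

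For the data and boundary terms I would estimate $u-\underline{u}_1$ with \cref{lem:interp_in_stab}(b), whose right-hand side is $O(h^s)$ by \cref{lem:triple_norm_fully_disc_conv}. The lifting remainder is handled explicitly: on $\STdata$ one has $\int_{I_n}\norm{\jump{\underline{u}_1^n}}_{L^2(\omega)}^2\vartheta_n^2\,\dT \leq \tfrac{\Delta t}{3}\norm{\jump{\underline{u}_1^n}}_\Omega^2$ by \cref{eq:theta-and-deriv-L2-norm}, which after summation is controlled by $\Delta t^2\abs{\underline{\mathbf{U}}_h}_{\uparrow\downarrow}^2$ and is therefore of higher order $O(h^{s+1})$; on $\Sigma$ one first applies the trace inequality \cref{ieq:cont-trace-ieq} to $\jump{\underline{u}_1^n}$ and then recognizes both resulting terms inside $\abs{\underline{\mathbf{U}}_h}_{\uparrow\downarrow}^2$, again producing a higher-order contribution once $\Delta t = Ch$ is used.

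The main work is the residual $\norm{\Box\phi}_{H^{-1}(\STdom)}$. Writing it weakly against $w\in H^1_0(\STdom)$ with $\norm{w}_{H^1(\STdom)}=1$ and using $\Box u = 0$, I would substitute $L_{\Delta t}\underline{u}_1 = \underline{u}_1 - \jump{\underline{u}_1^n}\vartheta_n$ so that $\langle\Box\phi,w\rangle$ splits into the broken-derivative form $\sum_n\{(\partial_t\underline{u}_1,\partial_t w)_{Q^n} - a(\underline{u}_1,w)_{Q^n}\}$ plus correction integrals carrying $\vartheta_n'$ and $\vartheta_n$. The first piece is exactly the quantity estimated in \cref{lem:res-bound} (its hypothesis \cref{eq:optimality_dual} holds by the dual optimality condition in \cref{eq:opt_compact_fully_disc}), bounding it by $\bigl|\sum_{n=1}^{N-1}(\jump{\underline{u}_2^n},w_{1,+}^n)_\Omega\bigr| + C\norm{w}_{H^1(\STdom)}\,h^s\norm{u}_{H^m(\STdom)}$. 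The correction integrals are controlled slab-wise by Cauchy--Schwarz and \cref{eq:theta-and-deriv-L2-norm}: the $\vartheta_n'$ term yields $(\Delta t)^{-1/2}\norm{\jump{\underline{u}_1^n}}_\Omega\norm{\partial_t w}_{Q^n}$, whose sum is dominated by $\abs{\underline{\mathbf{U}}_h}_{\uparrow\downarrow}\norm{w}_{H^1(\STdom)}$, and the $\vartheta_n$ term matches the $\Delta t\,\norm{\jump{\nabla\underline{u}_1^n}}_\Omega^2$ part of $\underline{I}_1^{\uparrow\downarrow}$.

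The hard part — and the essential difference from the semi-discrete \cref{thm:error-estimate-semi-discrete}, where $\jump{\underline{u}_2^n}=0$ — is the surviving jump term $\bigl|\sum_{n=1}^{N-1}(\jump{\underline{u}_2^n},w_{1,+}^n)_\Omega\bigr|$. I would estimate it by Cauchy--Schwarz as $\bigl(\sum_n\tfrac{1}{\Delta t}\norm{\jump{\underline{u}_2^n}}_\Omega^2\bigr)^{1/2}\bigl(\sum_n\Delta t\,\norm{w_{1,+}^n}_\Omega^2\bigr)^{1/2}$; the first factor is $\abs{\underline{\mathbf{U}}_h}_{\uparrow\downarrow}\leq Ch^s\norm{u}_{H^m(\STdom)}$, while the second requires a discrete-in-time trace inequality $\sum_n\Delta t\,\norm{w_{1,+}^n}_\Omega^2 \leq C\norm{w}_{H^1(\STdom)}^2$. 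I would prove the latter by the fundamental theorem of calculus within each slab, giving $\norm{w_{1,+}^n}_\Omega^2 \leq \tfrac{2}{\Delta t}\int_{I_n}\norm{w}_\Omega^2\,\dT + 2\Delta t\int_{I_n}\norm{\partial_t w}_\Omega^2\,\dT$, and summing (using $\Delta t = Ch$ small). Assembling the three bounds into \cref{eq:Lipschitz-stability} then yields the stated $O(h^s)$ estimate.
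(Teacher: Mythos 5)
Your proposal is correct and follows the paper's proof essentially step for step: applying \cref{eq:Lipschitz-stability} to the lifted error, bounding the data and boundary terms of $e_h = u-\underline{u}_1$ as in \cref{thm:error-estimate-semi-discrete}, controlling the lifting remainder $\underline{u}_1 - L_{\Delta t}\underline{u}_1$ via \cref{eq:theta-and-deriv-L2-norm}, the trace inequality \cref{ieq:cont-trace-ieq} and $\abs{\underline{\mathbf{U}}_h}_{\uparrow\downarrow}\leq Ch^s\norm{u}_{H^m(\STdom)}$, and splitting the residual into the broken form handled by \cref{lem:res-bound} (with its hypothesis \cref{eq:optimality_dual} verified from the dual optimality condition) plus the $\vartheta_n$, $\vartheta_n'$ corrections, which are exactly the terms $\xi(\underline{u}_1,w)$ and $\eta(\underline{u}_2,w)$ in the paper. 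The single point where you deviate is the estimate $\sum_{n}\Delta t\,\norm{w_{1,+}^n}_{\Omega}^2\leq C\norm{w}_{H^1(\STdom)}^2$ needed for the $\jump{\underline{u}_2^n}$ jump term: the paper inserts the interpolant $\Pi_h^{\ast}w$, invoking \cref{lem:approx-fixed-time-lvl}(a) together with the discrete inverse inequality in time (\cref{Lemma:Discrete inverse inequality in time}) to get $\norm{w_{+}^n}_{\Omega}\leq C(h^{1/2}+(\Delta t)^{-1/2})\norm{w}_{H^1(Q^n)}$, whereas you prove the same bound directly by the fundamental theorem of calculus on each slab, $\norm{w_{+}^n}_{\Omega}^2\leq \tfrac{2}{\Delta t}\norm{w}_{Q^n}^2 + 2\Delta t\,\norm{\partial_t w}_{Q^n}^2$, which is valid since $w\in H^1_0(\STdom)$. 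Your variant is more elementary, avoids the interpolation machinery entirely, and is arguably cleaner; the paper's route reuses tools it needs anyway in the appendix. Both yield the same final constant structure, so the proofs are interchangeable here.
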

\begin{proof}
Since $L_{\Delta t} \underline{u}_1 \in H^1(Q)$, we are allowed to apply the continuum stability estimate \cref{eq:Lipschitz-stability} to
$\tilde{e}_h := u - L_{\Delta t} \underline{u}_1$ which yields: 
\begin{align}
& \norm{ \tilde{e}_h }_{L^{\infty}(0,T;L^2(\Omega))} + \norm{\partial_t \tilde{e}_h }_{L^{2}(0,T;H^{-1}(\Omega))} \nonumber \\ 
& \leq C \left(  \norm{ e_h }_{ \STdata } + \norm{ e_h }_{ \Sigma}
        +  \norm{ \underline{u}_1 -  L_{\Delta t} \underline{u}_1 }_{ \STdata } + \norm{ \underline{u}_1 -  L_{\Delta t} \underline{u}_1 }_{ \Sigma }
	+ \norm{ \Box \tilde{e}_h }_{ H^{-1} ( \STdom ) } \right), \label{eq:appl-stability-lifted-error}
\end{align}
where $e_h := u - \underline{u}_1$ as in the proof of \cref{thm:error-estimate-semi-discrete}.
We need to control the terms on the right hand side. Starting with $\norm{ \Box \tilde{e}_h }_{ H^{-1} ( \STdom ) }$, we have for any $w \in H_0^1(\STdom)$
in view of $\Box u = 0$: 
\begin{align*}
	(- \partial_t \tilde{e}_h, \partial_t w)_Q + a(\tilde{e}_h,w)_Q 
& =   \sum\limits_{n=0}^{N-1} \{ (\partial_t \underline{u}_1, \partial_t w_1)_{ Q^n } - a(\underline{u}_1,w_1)_{Q^n} \} + \xi( \underline{u}_1, w) \\
& \leq C h^{s}  \norm{u}_{ H^{m}(\STdom)  } \norm{w}_{H^1(\STdom)} +  \xi( \underline{u}_1, w) + \eta( \underline{u}_2, w), 
\end{align*}
where \cref{lem:res-bound}, \cref{lem:triple_norm_fully_disc_conv} and \cref{lem:interp_in_stab} (a) have been used and  
\begin{align*}
\xi( \underline{u}_1, w) :=  \sum\limits_{n=1}^{N-1} \{ a( \vartheta_{n} \jump{ \underline{u}_1^n } , w  )_{Q^n}  -( \vartheta^{\prime}_{n} \jump{ \underline{u}_1^n } , \partial_t w  )_{Q^n} \}, \;  
\eta( \underline{u}_2, w) := \left\vert \sum\limits_{n=1}^{N-1} ( \jump{\underline{u}_2^n}, w_{+}^n  )_{\Omega} \right\vert. 
\end{align*}
By using \cref{eq:theta-and-deriv-L2-norm} we obtain
\begin{align*}
& \xi( \underline{u}_1, w) \leq \left(  \sum\limits_{n=1}^{N-1} \norm{ \vartheta^{\prime}_{n}  }^2_{L^2(I_n)} \norm{ \jump{ \underline{u}_1^n } }_{ \Omega}^2 
+ \norm{ \vartheta_{n}  }^2_{L^2(I_n)} \norm{ \jump{ \nabla \underline{u}_1^n } }_{ \Omega}^2  \right)^{1/2} \norm{ w }_{ H^1(\STdom) }  \\  
& \leq C \left(  \sum\limits_{n=1}^{N-1}   (\Delta t)^{-1}  \norm{ \jump{ \underline{u}_1^n } }_{ \Omega}^2 
+ \Delta t \norm{ \jump{ \nabla \underline{u}_1^n } }_{ \Omega}^2  \right)^{1/2} \norm{ w }_{ H^1(\STdom ) } 
 \leq C \abs{ \underline{\mathbf{U}}_h }_{ \uparrow \downarrow } \norm{ w }_{ H^1(\STdom) }. 
\end{align*}
To treat $\eta( \underline{u}_2, w)$, we first note that 
\[
\norm{w_{+}^n }_{\Omega} \leq \norm{ w_{+}^n - \Pi_h^{\ast} w_{+}^n  }_{\Omega} + \norm{  \Pi_h^{\ast} w_{+}^n  }_{\Omega} 
	\leq C \left( h^{1/2} + (\Delta t)^{-1/2}   \right) \norm{w}_{H^{1}(Q^n)} ,
\]
	where we applied \cref{lem:approx-fixed-time-lvl} and a discrete inverse inequality in time (see \cref{Lemma:Discrete inverse inequality in time}) to estimate the second term.
Hence, 
\[
\eta( \underline{u}_2, w) \leq C \left(  \sum\limits_{n=1}^{N-1} \frac{1}{\Delta t} \norm{  \jump{\underline{u}_2^n} }_{\Omega}^2 \right)^{1/2}
\left(  \sum\limits_{n=1}^{N-1} \Delta t \norm{w_{+}^n}_{\Omega}^2 \right)^{1/2}
\leq C \abs{ \underline{\mathbf{U}}_h }_{ \uparrow \downarrow } \norm{ w }_{ H^1(\STdom) }. 
\]
From \cref{lem:interp-in-jump-over-time-slice-bnd} and \cref{lem:triple_norm_fully_disc_conv} we obtain that $\abs{ \underline{\mathbf{U}}_h }_{ \uparrow \downarrow } \leq C h^{s}  \norm{u}_{ H^{m}(\STdom)  }$  and hence we conclude that the same bound holds for $\norm{ \Box \tilde{e}_h }_{ H^{-1} ( \STdom ) }$. 
The terms $ \norm{ e_h }_{ L^2(\STdata) }$ and $ \norm{ e_h }_{ L^2(\Sigma)}$ obey this bound as well which follows similarly as in the proof 
of \cref{thm:error-estimate-semi-discrete} for the semi-discrete case. 
It remains to treat
\begin{align*}
& \norm{ \underline{u}_1 -  L_{\Delta t} \underline{u}_1 }_{ \STdata }^2 + \norm{ \underline{u}_1 -  L_{\Delta t} \underline{u}_1 }_{ \Sigma}^2
\leq C \Delta t \sum\limits_{n=1}^{N-1} \left( \norm{ \jump{ \underline{u}_1^n } }_{ \Omega}^2 + \sum\limits_{K \in \mathcal{T}_h } \norm{ \jump{ \underline{u}_1^n }  }_{ L^2(\partial \Omega \cap \partial K )  }^2   \right) \\
& \leq C \Delta t \sum\limits_{n=1}^{N-1} \! \sum\limits_{K \in \mathcal{T}_h } \left( \frac{1}{h} \norm{ \jump{ \underline{u}_1^n }  }_{K }^2 + \! h \norm{ \jump{ \nabla \underline{u}_1^n }  }_{K }^2  \right) \leq C h \abs{ \underline{\mathbf{U}}_h }_{ \uparrow \downarrow }^2 \leq C  h^{2s+1}  \norm{u}_{ H^{m}(\STdom)  }^2,
\end{align*}
where the trace inequality \cref{ieq:cont-trace-ieq} and $\Delta t = C h$  have been used.
Now we have bounded all terms on the right hand side of \cref{eq:appl-stability-lifted-error} and the proof is complete. 
\end{proof}

\section{Preconditioning strategies}\label{section:precond}
The fully-discrete method defined in \cref{eq:opt_compact_fully_disc} is globally coupled in time. This is due to the presence of the term $S^{\uparrow \downarrow}_{\Delta t}( \underline{\mathbf{U}}_h, \underline{\mathbf{W}}_h )$ defined in \cref{eq:def-jump-stab}, which contains jump operators on both variables $\underline{\mathbf{U}}_h$ and $\underline{\mathbf{W}}_h$. Consequently, solving \cref{eq:opt_compact_fully_disc} with a direct solver would require to assemble and factorize a $d+1$-dimensional linear system, which is prohibitively expensive for $d=3$. Hence, iterative solutions strategies must be investigated. \par
In this setting Krylov methods such as GMRes are very attractive as they only require matrix-vector multiplications. Note here that application of the space-time matrix resulting from \cref{eq:opt_compact_fully_disc} can be easily implemented time-slab wise which obliterates the need for assembling an enormous space-time system. Unfortunately, an unpreconditioned GMRes iteration is not suitable for solving our problem even in the simplest possible setting as \cref{fig:pre-1d} demonstrates. The column `NoPre' in the table gives the GMRes iteration numbers for a simple problem on the unit interval discretized with piecewise linear elements, which are almost as high as the total number of degrees of freedom (`ndof' in the second column). Hence, a preconditioning strategy is required.  In this chapter we propose two simple strategies based on time-marching procedures.

\subsection{Monolithic time-marching}\label{ssection:MTM}
A simple preconditioner can be obtained by relaxing the bi-directional coupling $\uparrow \downarrow$ in $S^{\uparrow \downarrow}$ to a forward coupling $\uparrow$ in time only. This is achieved by replacing $S^{\uparrow \downarrow}$ in \cref{eq:complete_bfi_def_fully_discr} by 
\begin{equation}\label{eq:def-jump-stab-pre}
S^{\uparrow }_{\Delta t}( \underline{\mathbf{U}}_h, \underline{\mathbf{W}}_h ) 
:= \underline{I}_1^{\uparrow}( \underline{\mathbf{U}}_h, \underline{\mathbf{W}}_h ) + 
\underline{I}_2^{\uparrow}( \underline{\mathbf{U}}_h, \underline{\mathbf{W}}_h ), 
\end{equation} 
\begin{align*}
\underline{I}_1^{\uparrow}( \underline{\mathbf{U}}_h, \underline{\mathbf{W}}_h ) :=& \sum\limits_{n=1}^{N-1} \left\{ \frac{1}{\Delta t} ( \jump{ \underline{u}_1^n } ,  \underline{w}_{1,+}^n  )_{ \Omega} 
+ \Delta t ( \jump{ \nabla \underline{u}_1^n } , \nabla \underline{w}_{1,+}^n  )_{ \Omega} \right\}, \\ 
 \underline{I}_2^{\uparrow}( \underline{\mathbf{U}}_h, \underline{\mathbf{W}}_h ) :=& \sum\limits_{n=1}^{N-1} \frac{1}{\Delta t} ( \jump{ \underline{u}_2^n } ,  \underline{w}_{2,+}^n  )_{ \Omega}.
\end{align*}
This allows to solve for $(\underline{\mathbf{U}}_h,\underline{\mathbf{Z}}_h)$ time-slab by time-slab, i.e.\ starting with $Q^0$ and progressing one slab $Q^n$ at a time. Indeed, if the solution on $Q^{n-1}$ has been determined then in particular $\underline{u}_{j,-}^n$ for $j=1,2$ are known and can be shifted to the right hand side of the linear system as is common practice in the dG time-stepping method, see e.g.\ the monograph of Thom{\'e}e \cite{Thom07} for details. This means that the cost of one application of the preconditioner is the same as the solution of a wave equation with a dG time-stepping method. \\
Since the dual stabilization and the slab-local part $S_h(\cdot,\cdot)$ of the primal stabilization remain fully active in the preconditioner, it remains possible to choose the polynomial degrees of primal and dual variable independent of each other.
\begin{itemize}
\item In the following we will refer to the strategy using the full dual order, i.e.\ $k_{\ast}=k$ and $ q_{\ast} = q$, as `M-f'. 
\item The choice using minimal dual order $k_{\ast}=1, q_{\ast}=0$ will be denoted by `M-l'.
\end{itemize}

\subsection{Decoupled forward-backward solve}
The strategy proposed in the previous subsection treats primal and dual variable in a monolithic fashion. Alternatively, we can (approximately) solve for these variables separately by exploiting the fact that $\underline{\mathbf{Z}}_h$ converges to zero, see \cref{lem:triple_norm_fully_disc_conv}. Hence, we may set $\underline{\mathbf{Z}}_h = 0$ in \cref{eq:opt_compact_fully_disc} 
and solve for $\underline{\mathbf{U}}_h$ and subsequently determine a corrected  $\underline{\mathbf{Z}}_h$ based on $\underline{\mathbf{U}}_h$. This requires, however, that $A[ \cdot, \cdot]$ on its own provides a consistent and stable discretization of the wave equation which is currently not the case. Consequently, we have to replace $A[ \cdot, \cdot]$ by an enriched bilinear form $\tilde{A}[ \cdot, \cdot]$ defined as follows:
\begin{align}
%\begin{aligned} 
\tilde{A}[ \underline{\mathbf{U}}_h, \underline{\mathbf{Y}}_h] &=  A[ \underline{\mathbf{U}}_h, \underline{\mathbf{Y}}_h]  \nonumber \\ 
 & + ( \underline{u}_1 , \underline{y}_1 )_{\STdata } + \frac{\lambda}{h} \sum\limits_{n=0}^{N-1} ( \underline{u}_1 , \underline{y}_1  )_{ \Sigma^n } + \sum\limits_{n=1}^{N-1}  \left\{  ( \jump{ \underline{u}_1^n } ,  \underline{y}_{2,+}^n  )_{\Omega} +  ( \jump{ \underline{u}_2^n } ,  \underline{y}_{1,+}^n  )_{\Omega}  \right\} \label{eq:tildeA-def}  
 % \end{aligned}
\end{align}
with the Nitsche parameter $\lambda > 0$ sufficiently large as usual. We remark that this enriched bilinear form is a minor modification of a method originally proposed and analyzed in \cite{J93}. In order to account for the presence of the jumps terms in  $\tilde{A}[ \cdot, \cdot]$, the dual stabilization has to be adapted as follows: 
\[
\tilde{S}^{\ast}_h( \underline{\mathbf{Y}}_h, \underline{\mathbf{Z}}_h) = S^{\ast}_h( \underline{\mathbf{Y}}_h, \underline{\mathbf{Z}}_h) + \Delta t \sum\limits_{n=1}^{N-1}  \left\{  ( \underline{y}_{1,+}^n, \underline{z}_{1,+}^n  )_{\Omega} +  ( \underline{y}_{2,+}^n, \underline{z}_{2,+}^n  )_{\Omega}  \right\}.
\]
In $\tilde{A}$ we have also added a Luenberger observer type term on $\STdata$. It is known that this can improve the stability of the corrected dynamics (see \cite{CCDM12,CCM12}), which may improve the convergence of the iterations. Since the perturbation is consistent it is straightforward to include in the error analysis above.
The first-order optimality conditions then become:
Find $(\underline{\mathbf{U}}_h, \underline{\mathbf{Z}}_h) \in \ProdFullyDiscrSpace{k}{q} \times \ProdFullyDiscrSpace{ k_{\ast} }{ q_{\ast} } $ such that
\begin{align}
	(\underline{u}_1,\underline{w}_1)_{\STdata} + \tilde{A}[ \underline{\mathbf{W}}_h, \underline{\mathbf{Z}}_h ] + \tilde{S}_h(  \underline{\mathbf{U}}_h, \underline{\mathbf{W}}_h )  &= (u_{\omega},\underline{w}_1)_{ \STdata}, \quad \forall \; \underline{\mathbf{W}}_h \in \ProdFullyDiscrSpace{k}{q}, \label{eq:optimality_cond_fb_dual} \\ 
\tilde{A}[ \underline{\mathbf{U}}_h, \underline{\mathbf{Y}}_h ] -  \tilde{S}_h^{\ast}( \underline{ \mathbf{Y}}_h, \underline{\mathbf{Z}}_h)  &=  (u_{\omega},\underline{y}_1)_{ \STdata} , \quad \forall \;  \underline{\mathbf{Y}}_h \in \ProdFullyDiscrSpace{ k_{\ast} }{ q_{\ast} },  \label{eq:optimality_cond_fb_primal} 
\end{align} 
for $ \tilde{S}_h(  \underline{\mathbf{U}}_h, \underline{\mathbf{W}}_h ) := S_h(  \underline{\mathbf{U}}_h, \underline{\mathbf{W}}_h ) + S^{\uparrow \downarrow}_{\Delta t}( \underline{\mathbf{U}}_h, \underline{\mathbf{W}}_h ) $. 
Assuming $k=k_{\ast}$ and $q = q_{\ast}$ we propose the following preconditioning strategy, which we will refer to as `DFB'.
\begin{enumerate}
\item[1.] Set  $\underline{\mathbf{Z}}_h = 0$ in \cref{eq:optimality_cond_fb_primal} and solve 
\begin{equation}\label{eq:tildeA-fb-Uh}
\tilde{A}[ \underline{\mathbf{U}}_h, \underline{\mathbf{Y}}_h ]  =  (u_{\omega},\underline{y}_1)_{ \STdata} , \quad \forall \;  \underline{\mathbf{Y}}_h \in \ProdFullyDiscrSpace{ k_{\ast} }{ q_{\ast} }   
\end{equation}
by marching forward in time to obtain $\underline{\mathbf{U}}_h$. 
\item[2.] Then insert $\underline{\mathbf{U}}_h$ in \cref{eq:optimality_cond_fb_dual} and solve 
\begin{equation}\label{eq:tildeA-fb-Zh}
\tilde{A}[ \underline{\mathbf{W}}_h, \underline{\mathbf{Z}}_h ] = (u_{\omega} - \underline{u}_1,\underline{w}_1)_{ \STdata} -  \tilde{S}_h(  \underline{\mathbf{U}}_h, \underline{\mathbf{W}}_h ), \quad \forall \; \underline{\mathbf{W}}_h \in \ProdFullyDiscrSpace{k}{q},
\end{equation}
backward in time to obtain $\underline{\mathbf{Z}}_h$.  
\end{enumerate}
We point out that we integrate by parts in \cref{eq:tildeA-fb-Zh} to write
\begin{align*}
\begin{aligned} 
& \tilde{A}[ \underline{\mathbf{W}}_h, \underline{\mathbf{Z}}_h] = \sum\limits_{n=0}^{N-1} \big\{ - (\underline{w}_2,\partial_t  \underline{z}_1 + \underline{z}_2 )_{Q^n} + a(\underline{w}_1,\underline{z}_1)_{Q^n}  
 - ( \underline{w}_1 , \partial_t \underline{z}_2)_{Q^n}  \\
& - ( \nabla \underline{w}_1 \cdot \mathbf{n}, \underline{z}_1)_{ \Sigma^n } +  \frac{\lambda}{h} ( \underline{w}_1 , \underline{z}_1  )_{ \Sigma^n }  \big\} + ( \underline{w}_1 ,  \underline{z}_1 )_{ \STdata} -  ( \underline{w}_{2,+}^0 , \underline{z}_{1,+}^0  )_{\Omega} -  ( \underline{w}_{1,+}^0 , \underline{z}_{2,+}^0  )_{\Omega} \\ 
&  +  ( \underline{w}_{2,-}^N , \underline{z}_{1,-}^N  )_{\Omega} +  ( \underline{w}_{1,-}^N , \underline{z}_{2,-}^N  )_{\Omega} - \sum\limits_{n=1}^{N-2} \left\{ (  \underline{w}_{2,-}^{n+1} , \jump{ \underline{z}_1^{n+1} }  )_{\Omega} + (  \underline{w}_{1,-}^{n+1} , \jump{ \underline{z}_2^{n+1} }  )_{\Omega}   \right\},
\end{aligned}
\end{align*}
which then indeed allows to obtain $\underline{\mathbf{Z}}_h$ time-slab wise by starting at $n=N-1$ and progressing backwards in time. 
We also mention that this strategy is limited to the setting in which primal and dual polynomial degrees coincide as we need to make sure that the linear systems in \cref{eq:tildeA-fb-Uh} and \cref{eq:tildeA-fb-Zh} are both well-posed.

\subsection{Comparison}
Let us briefly compare the proposed strategies. Both approaches only require ingredients available in standard dG time-stepping methods. In particular, only linear systems on the time slabs $Q^n$ need to be solved. When the full polynomial degree for the dual variables is used, i.e.\  $k_{\ast}=k$ and $ q_{\ast} = q$, then the linear systems on the slabs to be inverted in the decoupled method (`DFB') are half the size of the ones in the monolithic method (`M-f'). This is because of the monolithic treatment of primal and dual variable in the latter. However, if the variant `M-l' of the monolithic method with minimal dual polynomial order is used, then the difference in size between the linear systems quickly diminishes as the primal polynomial order increases. Therefore, it seems fair to focus in the numerical experiments presented in \cref{section:numexp} on a comparison between the `DFB' and `M-l' methods. 
\section{Numerical experiments}\label{section:numexp}
In this section we present some numerical experiments for the fully-discrete method defined in \cref{ssection:fully-discrete-method} and a comparison of the preconditioning strategies proposed in \cref{section:precond}. Our implementation uses the \texttt{FEniCSx} library \cite{BarattaEtal2023,ScroggsEtal2022,BasixJoss,AlnaesEtal2014}. Reproduction material for the numerical experiments is available at Zenodo \cite{zenodo_wave_uc_dg}.

\subsection{Preconditioning experiments for $d=1$ }\label{ssection:precond-1d}
We start by considering a simple experiment on the unit interval $\Omega = [0,1]$. The data domain is given by $\omega = [0,1/4] \cup [3/4,1]$ and the exact solution is $u = \cos(\pi t) \sin(\pi x)$. We measure up to $T=1/2$ so that the GCC is fulfilled. \\
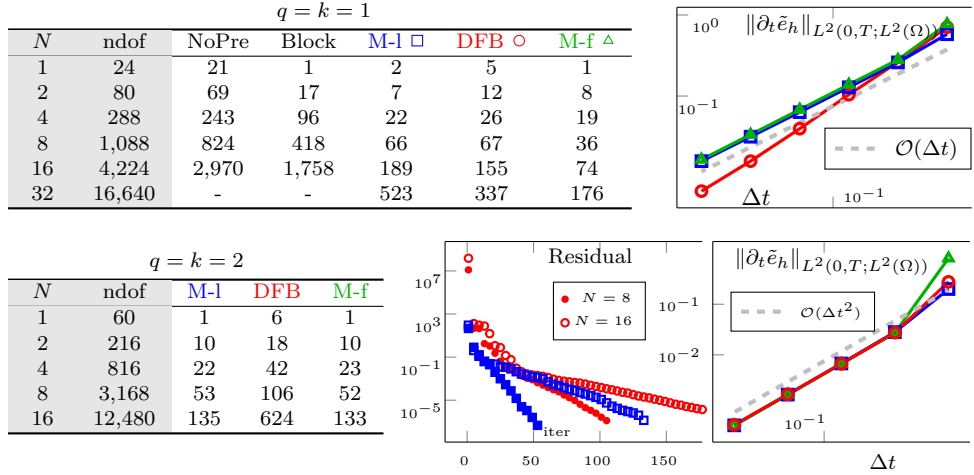
\begin{figure}[h]
%\parbox{0.6\textwidth}{
\begin{minipage}[c]{0.645\textwidth}%
	%\centering
\begin{footnotesize}
\begin{center}
$q=k=1$
\end{center}
\pgfplotstabletypeset[
    columns={N,ndof,vanilla,block,MTM-lo,DFB,MTM-full},
    %columns/N/.style={  column name={$N$} },
    columns/N/.style={  column name={$N$}, column type/.add={>{ \columncolor[gray]{.9} }}{}  },
    columns/ndof/.style={  column name={ndof}, column type/.add={>{ \columncolor[gray]{.9} }}{}    },
    columns/vanilla/.style={  column name={NoPre}, string replace={3000}{},empty cells with={-},  },
    columns/block/.style={  column name={Block}, string replace={3000}{},empty cells with={-},   },
    columns/MTM-lo/.style={  column name={\textcolor{blue}{M-l} \drawSquare}, string replace={3000}{},empty cells with={-}  },
	columns/DFB/.style={  column name={\textcolor{red}{DFB}\drawCircle}, string replace={3000}{},empty cells with={-}  }, 
    columns/MTM-full/.style={  column name={\textcolor{green!70!black}{M-f} \drawTriangle}, string replace={3000}{},empty cells with={-}  },
    ] {precond-1d-iters-order1.dat}
   %}
\end{footnotesize}
\end{minipage}
\begin{minipage}[l]{0.325\textwidth}%
%\begin{center}
%\hspace*{1em}
%\begin{small}
\begin{tikzpicture}[scale = 1.0]
\begin{axis}[
   height = 4.25cm,
   width = 5.5cm,
   every axis plot/.append style={thick},
   axis y line*=left,
   %legend pos = south east, 
   label style={font=\tiny},
   tick label style={font=\tiny}, 
   ymode=log,
   xmode=log,
   %xmin=0,xmax=1.6e4,
   ytick={1e0,1e-1},
   %ymax = 1e-0,
   %ymax = 350, 
   xlabel= { \small{ $\Delta t$}  },
   y tick label style={ xshift=2.0em,yshift=-0.4em },
   x tick label style={ xshift=1.2em,yshift=1.1em },
   x label style={at={(axis description cs:0.35,+0.25)},anchor=east}, 
	title = {  \footnotesize{ $\norm{\partial_t \tilde{e}_h }_{L^{2}(0,T;L^2(\Omega))}$}    },
    legend style={at={(0.75, 0.4)},anchor=north},
    title style={at={(0.55,0.935)},anchor=north},
   ]
   \addplot[red,very thick,mark=o,forget plot]
        table[x=deltat,y=DFB] {precond-1d-L2L2ut-order1.dat}; %\addlegendentry{DFB}%
   \addplot[blue,very thick,mark=square,forget plot]
        table[x=deltat,y=MTM-lo] {precond-1d-L2L2ut-order1.dat};  %\addlegendentry{MTM-l}%
   \addplot[green!70!black,very thick,mark=triangle,forget plot]
        table[x=deltat,y=MTM-full] {precond-1d-L2L2ut-order1.dat};  %\addlegendentry{MTM-f}%
   \addplot[lightgray,dashed,ultra thick]
        table[mark=none,x=deltat,y expr ={.75*\thisrowno{0}}] {precond-1d-L2L2ut-order1.dat};  %
	\legend{\footnotesize{ $ \mathcal{O}(\Delta t) $ }}
 \end{axis}
\end{tikzpicture}
%\end{small}
%\includegraphics[width=.5\textwidth]{../../plots/GCC-1D-Linfty-L2-u-fb.pdf}
%\end{center}
\end{minipage}
\vspace*{.75em}
\begin{minipage}[c]{0.385\textwidth}%
\begin{footnotesize}
\begin{center}
$q=k=2$
\end{center}
\pgfplotstabletypeset[
    columns={N,ndof,MTM-lo,DFB,MTM-full},
    %columns/N/.style={  column name={$N$} },
    columns/N/.style={  column name={$N$}, column type/.add={>{ \columncolor[gray]{.9} }}{}  },
    columns/ndof/.style={  column name={ndof}, column type/.add={>{ \columncolor[gray]{.9} }}{}    },
    columns/MTM-lo/.style={  column name={\textcolor{blue}{M-l}}, string replace={3000}{},empty cells with={-}  },
    columns/DFB/.style={  column name={\textcolor{red}{DFB}}, string replace={3000}{},empty cells with={-}  }, 
    columns/MTM-full/.style={  column name={\textcolor{green!70!black}{M-f}}, string replace={3000}{},empty cells with={-}  },
    ] {precond-1d-iters-order2.dat}
   %}
\end{footnotesize}
\end{minipage}
\begin{minipage}[l]{0.615\textwidth}%
 \vspace*{.75em}
%\hspace{-1.35em}
\begin{tikzpicture}[scale = 1.0]
\begin{groupplot}[
    group style={
        group size=2 by 1,
        horizontal sep=4.0pt,
        vertical sep=40pt,
   },
   label style={font=\tiny},
   tick label style={font=\tiny}, 
   height = 4.25cm,
   width = 5.0cm,
   every axis plot/.append style={thick},
   axis y line*=left,
   ]
  \nextgroupplot[
  ymode = log,	 
   xlabel= { iter  },
   y tick label style={ xshift=0.25em,yshift=-0.4em },
   x tick label style={ xshift=-0.1em,yshift=-0.2em }, 
   ytick = {1e7,1e3,1e-1,1e-5 },
   x label style={at={(axis description cs:0.525,+0.25)},anchor=east}, 
   title = {\footnotesize{ Residual} },
   legend style={at={(0.6, 0.8)},anchor=north},
   title style={at={(0.55,0.935)},anchor=north},
   xmax = 177,
  ]	
  %\addplot[red,only marks, mark = diamond,mark options={scale=0.5},each nth point=4 ]
  %      table[x=iter,y=res] {precond-1d-DFB-order2-residuals-ref-lvl2.dat}; %\addlegendentry{DFB}%
  \addplot[red,only marks, mark =*,mark options={scale=0.5},each nth point=4 ]
        table[x=iter,y=res] {precond-1d-DFB-order2-residuals-ref-lvl3.dat}; %\addlegendentry{DFB}%
  \addplot[ red,only marks, mark =o,mark options={scale=0.7},each nth point=4 ]
        table[x=iter,y=res] {precond-1d-DFB-order2-residuals-ref-lvl4.dat}; %\addlegendentry{DFB}%

  %\addplot[ blue,only marks, mark = diamond,mark options={scale=0.7},forget plot,each nth point=4  ]
  %      table[x=iter,y=res] {precond-1d-MTM-lo-order2-residuals-ref-lvl2.dat}; %\addlegendentry{DFB}%
  \addplot[ blue,only marks, mark = square*,mark options={scale=0.7},forget plot,each nth point=4  ]
        table[x=iter,y=res] {precond-1d-MTM-lo-order2-residuals-ref-lvl3.dat}; %\addlegendentry{DFB}%
  \addplot[ blue,only marks, mark =square,mark options={scale=0.7},forget plot,each nth point=4  ]
        table[x=iter,y=res] {precond-1d-MTM-lo-order2-residuals-ref-lvl4.dat}; %\addlegendentry{DFB}%
	\legend{  \tiny{ $N=8$}, \tiny{$N=16$} }
 
  \nextgroupplot[ 
   ymode=log,
   xmode=log,
   %xmin=0,xmax=1.6e4,
   xtick = {1e-1},
   ytick={1e-1,1e-2},
   axis y line*=left,
   %ymax = 1e-0,
   %ymax = 350, 
   xlabel= { \footnotesize{ $\Delta t$}  },
   y tick label style={ xshift=-0.1em,yshift= 0.0em },
   x tick label style={ xshift=-0.7em,yshift=1.4em },
   x label style={at={(axis description cs:0.75,0.1)},anchor=east}, 
	title = {  \footnotesize{ $\norm{\partial_t \tilde{e}_h }_{L^{2}(0,T;L^2(\Omega))}$}    },
    legend style={at={(0.35, 0.75)},anchor=north},
    title style={at={(0.45,0.935)},anchor=north},
  ]
   \addplot[blue,very thick,mark=square,forget plot]
        table[x=deltat,y=MTM-lo] {precond-1d-L2L2ut-order2.dat};  %\addlegendentry{MTM-l}%
   \addplot[green!70!black,very thick,mark=triangle,forget plot]
        table[x=deltat,y=MTM-full] {precond-1d-L2L2ut-order2.dat};  %\addlegendentry{MTM-f}%
   \addplot[red,very thick,mark=o,forget plot]
        table[x=deltat,y=DFB] {precond-1d-L2L2ut-order2.dat}; %\addlegendentry{DFB}%
   \addplot[lightgray,dashed,ultra thick]
        table[mark=none,x=deltat,y expr ={.75*\thisrowno{0}*\thisrowno{0}}] {precond-1d-L2L2ut-order2.dat};  %
	\legend{\tiny{ $ \mathcal{O}(\Delta t^2) $ }}
\end{groupplot}
\end{tikzpicture}

\end{minipage}
\caption{ Results for one-dimensional experiment of \cref{ssection:precond-1d} using $h = \Delta t$. 
The upper panel refers to $q=k=1$ while the lower panel uses $q=k=2$. The tables contain iteration numbers for GMRes preconditioned by the methods proposed in \cref{section:precond} using tolerance $10^{-7}$. The column `ndof' gives the total number of degrees of freedom for the `M-f'-method.
The two plots on the right show convergence of $\norm{\partial_t \tilde{e}_h }_{L^{2}(0,T;L^2(\Omega))}$ with $\tilde{e}_h := u - L_{\Delta t} \underline{u}_1$. In the history of the residual only every fourth iteration is shown as a data point.  
}
\label{fig:pre-1d}
\end{figure}
The results for piecewise affine and quadratic approximation are displayed in \cref{fig:pre-1d}.
In addition to the preconditioning approaches introduced in \cref{section:precond} we also show the iteration numbers for a `Block'-Jacobi preconditioner in which the coupling terms $S^{\uparrow \downarrow}_{\Delta t}( \cdot, \cdot )$ have been completely removed. The poor performance of this approach shows that including some form of communication between time-slabs is crucial. Concerning the approaches from  \cref{section:precond} we make the follwing observations:
\begin{itemize}
\item The iteration numbers for all the methods increase at least linearly with $N$. For the monolithic method this is evident as the term $S^{\uparrow \downarrow}_{\Delta t}(\cdot,\cdot)$ which is only approximately represented by the preconditioner scales with $1/\Delta t \sim N$. 
\item The method `M-f' has the lowest iteration numbers. However, it is also the most expensive. 
Fortunately, the iteration numbers for the `M-l' method are nearly the same as for the  `M-f' method when $q=k >1$ and its cost is similar to that of the `DFB' method. Hence, the `M-l' method is a promising candidate to be used for higher-order discretizations.  
\item For $q=k = 1$ the  `M-l' method cannot compete with the  `DFB' method due to the poorly resolved dual variable. The plot of the error in the upper panel of \cref{fig:pre-1d} also shows that the latter enjoys some super-convergence properties for piecewise affine discretizations. This is courtesy of the additional jumps terms which have been integrated into the modified bilinear form $\tilde{A}[ \cdot, \cdot]$ in \cref{eq:tildeA-def}. These terms cannot be accommodated in the method `M-l' as they are incompatable with the monolithic forward sweep of \cref{ssection:MTM}. 
A minor drawback of the `DFB' method is that the corresponding residuals are very high in the first GMRes iterations and they also increase with $N$ as \cref{fig:pre-1d} shows. Here, the  `M-l' method behaves more favourably. Nevertheless, the `DFB' method is attractive for  $q=k = 1$. 
\end{itemize}

\subsection{Unit cube geometry with data given all around}\label{ssection:cube-gcc}
Let us now consider a computationally more demanding experiment on the unit cube $\Omega = [0,1]^3$ with data domain $\omega = \Omega \setminus [1/4,3/4]^3$ and final time $T= 1/2$.  The data domain is sketched in the center of \cref{fig:GCC-3D}. The reference solution is given by 
\[
u(t,x,y,z) = \cos(\sqrt{3} \pi t) \sin(\pi x) \sin(\pi y) \sin(\pi z).  
\]
We set $\Delta t = h$ for this experiment and report the errors in the $\norm{ \cdot }_{L^{\infty}(0,T;L^2(\Omega))}$ and $\norm{ \partial_t( \cdot) }_{L^{2}(0,T;L^2(\Omega))}$-norms\footnote{For simplicity we measured the $L^{2}(0,T;L^2(\Omega))$ instead of the $L^{2}(0,T;H^{-1}(\Omega))$ norm.} in terms of the polynomial order $q=k$ in \cref{fig:GCC-3D}.  For the wave displacement some superconvergence can be observed, as has also been reported in previous publications \cite{BFO20,BFMO21} for the spatially one-dimensional case. The velocity marginally superconverges for the 
`DFB' method when piecewise affine approximation is used as already observed in the previous experiment. 
Overall, the numerical convergence rates are in line with the theoretical result from \cref{thm:error-estimate-fully-discrete}. \\ 
\begin{figure}[h]
\centering
\begin{tikzpicture}[scale = 1.0]
\begin{scope}[ ]
\begin{tiny}
\begin{axis}[
    height = 4.75cm,
    width = 5.7cm,
    name=displ,
    ymode=log,
    xmode=log,
    ymax = 2e-1,
    axis y line*=left,
    xlabel= { $\Delta t = h$},
    x label style={at={(axis description cs:0.35,+0.225)},anchor=east},
    legend style = { column sep = 10pt, legend columns = 1, legend to name = grouplegend,},
    title style={at={(0.5,1.0735)},anchor=north},
    title = {  $\norm{ u - \mathcal{L}_{\Delta t} \underline{u}_1 }_{L^{\infty}(0,T;L^2(\Omega))}$ },
    legend style={at={(0.5,-0.1)},anchor=north},
	]
    
    \addplot[blue,only marks,mark=square,mark options={scale=1.25},forget plot] 
   	table[x=deltat,y=MTM-lo] {precond-3d-GCC-LinftyL2u-order1.dat}; %\addlegendentry{$q=1$}%   
    \addplot[blue,very thick] 
   	table[x=deltat,y=MTM-lo] {precond-3d-GCC-LinftyL2u-order1.dat}; \addlegendentry{$q=1$}%   
     \addplot[red,very thick,only marks, mark=o,mark options={scale=1.25},forget plot] 
   	table[x=deltat,y=DFB] {precond-3d-GCC-LinftyL2u-order1.dat}; 
    
    \addplot[blue,only marks,mark=square,mark options={scale=1.25},forget plot] 
   	table[x=deltat,y=MTM-lo] {precond-3d-GCC-LinftyL2u-order2.dat}; %\addlegendentry{$q=2$}%    
    \addplot[blue,very thick,dashed] 
   	table[x=deltat,y=MTM-lo] {precond-3d-GCC-LinftyL2u-order2.dat}; \addlegendentry{$q=2$}%   
     \addplot[red,very thick,only marks, mark=o,mark options={scale=1.25},forget plot] 
   	table[x=deltat,y=DFB] {precond-3d-GCC-LinftyL2u-order2.dat}; 

    \addplot[blue,only marks,mark=square,mark options={scale=1.25},forget plot] 
   	table[x=deltat,y=MTM-lo] {precond-3d-GCC-LinftyL2u-order3.dat}; %\addlegendentry{$q=3$}%   
    \addplot[blue,very thick,densely dotted] 
   	table[x=deltat,y=MTM-lo] {precond-3d-GCC-LinftyL2u-order3.dat}; \addlegendentry{$q=3$}%   
    \addplot[red,very thick,only marks, mark=o,mark options={scale=1.25},forget plot] 
   	table[x=deltat,y=DFB] {precond-3d-GCC-LinftyL2u-order3.dat}; 

    \addplot[lightgray,dashed,ultra thick] 
    	table[mark=none,x=deltat,y expr ={.15*\thisrowno{0}}] {precond-3d-GCC-LinftyL2u-order1.dat};  %\addlegendentry{$ \mathcal{O}(\Delta t) $ } %
    \addplot[lightgray,dotted,ultra thick] 
    	table[mark=none,x=deltat,y expr ={.15*\thisrowno{0}*\thisrowno{0} }] {precond-3d-GCC-LinftyL2u-order2.dat};  %\addlegendentry{$ \mathcal{O}((\Delta t)^2) $ } %
    \addplot[lightgray,dashdotted,ultra thick] 
    	table[mark=none,x=deltat,y expr ={.05*\thisrowno{0}*\thisrowno{0}*\thisrowno{0}}] {precond-3d-GCC-LinftyL2u-order3.dat};  %\addlegendentry{$ \mathcal{O}((\Delta t)^3) $ } %
    %\legend{ $q=k=1$, $q=k=2$,$q=k=3$ } 
    %\node[draw,circle,ultra thick,lightgray] (Z1) at (axis cs:0.0625,4.5e-3) {};
   \end{axis}
    \node at ($(displ) + (-0.0cm,-2.55cm)$) {\ref{grouplegend}}; 
\end{tiny}
\end{scope}

 \begin{scope}[xshift=4.5cm]
 %\begin{footnotesize}
 \node (geom) at (1.0,2.25) {\includegraphics[scale =.15]{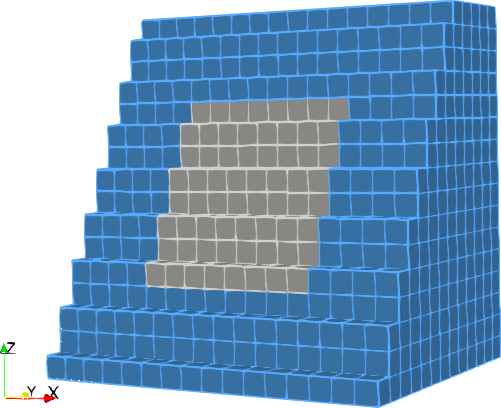}};
 \node[draw,fill=white] (la) at (1.15,3.25) { {\footnotesize $\omega $ }};
  \node[] (Z2) at (-2.25,0.55) {};
  \node (err) at (1.25,-0.4) {\includegraphics[scale =.115]{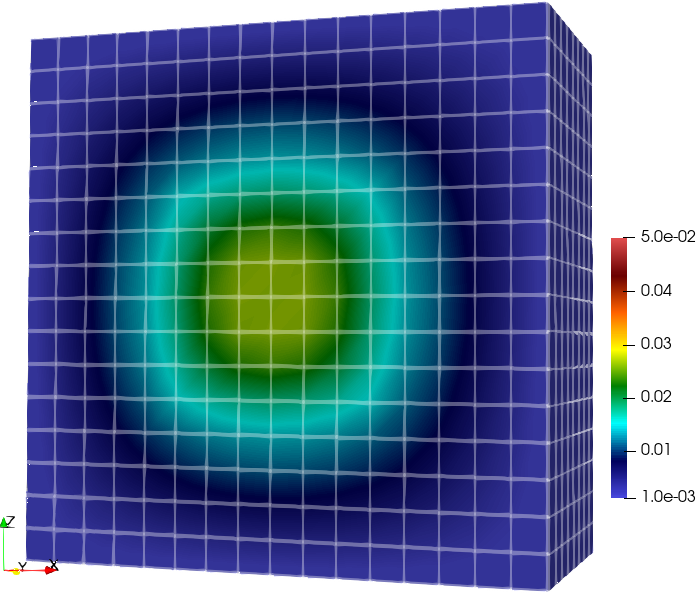}};
 \node[draw,fill=white] (la) at (1.05,0.875) {\tiny{$\vert u(0,\cdot) - \mathcal{L}_{\Delta t} \underline{u}_1(0,\cdot) \vert$}};
  \draw[lightgray,ultra thick, ->] (-0.3,-0.55 ) -- (-3.145, 1.695);
 %\end{footnotesize}
 \end{scope}

\begin{scope}[xshift=7.5cm]
\begin{tiny}
\begin{axis}[ 
    height = 4.75cm,
    width = 5.7cm,
    name=vel,
    ymode=log,
    xmode=log,
    ymax = 1e-0,
    axis y line*=left,
    xlabel= { $\Delta t = h$},
    legend style = { column sep = 10pt, legend columns = 1, legend to name = grouplegendv,},
    x label style={at={(axis description cs:0.35,+0.225)},anchor=east},
    title style={at={(0.5,1.0735)},anchor=north},
    title = {  $\norm{ \partial_t( u - \mathcal{L}_{\Delta t} \underline{u}_1 ) }_{L^{2}(0,T;L^2(\Omega))}$ },
    legend style={at={(0.5,-0.1)},anchor=north},
	]
    
    \addplot[blue,only marks,mark=square,mark options={scale=1.25},forget plot] 
   	table[x=deltat,y=MTM-lo] {precond-3d-GCC-L2L2ut-order1.dat}; %\addlegendentry{$q=1$}%   
    \addplot[blue,very thick,forget plot] 
   	table[x=deltat,y=MTM-lo] {precond-3d-GCC-L2L2ut-order1.dat}; %\addlegendentry{$q=1$}%   
     \addplot[red,very thick,only marks, mark=o,mark options={scale=1.25},forget plot] 
   	table[x=deltat,y=DFB] {precond-3d-GCC-L2L2ut-order1.dat}; 
    
    \addplot[blue,only marks,mark=square,mark options={scale=1.25},forget plot] 
   	table[x=deltat,y=MTM-lo] {precond-3d-GCC-L2L2ut-order2.dat}; %\addlegendentry{$q=2$}%    
    \addplot[blue,very thick,dashed,forget plot] 
   	table[x=deltat,y=MTM-lo] {precond-3d-GCC-L2L2ut-order2.dat}; %\addlegendentry{$q=2$}%   
     \addplot[red,very thick,only marks, mark=o,mark options={scale=1.25},forget plot] 
   	table[x=deltat,y=DFB] {precond-3d-GCC-L2L2ut-order2.dat}; 

    \addplot[blue,only marks,mark=square,mark options={scale=1.25},forget plot] 
   	table[x=deltat,y=MTM-lo] {precond-3d-GCC-L2L2ut-order3.dat}; %\addlegendentry{$q=3$}%   
    \addplot[blue,very thick,densely dotted,forget plot] 
   	table[x=deltat,y=MTM-lo] {precond-3d-GCC-L2L2ut-order3.dat}; %\addlegendentry{$q=3$}%   
    \addplot[red,very thick,only marks, mark=o,mark options={scale=1.25},forget plot] 
   	table[x=deltat,y=DFB] {precond-3d-GCC-L2L2ut-order3.dat}; 
 
    \addplot[lightgray,dashed,ultra thick] 
    	table[mark=none,x=deltat,y expr ={1.3*\thisrowno{0}}] {precond-3d-GCC-L2L2ut-order1.dat};  \addlegendentry{$ \mathcal{O}(\Delta t) $ } %
    \addplot[lightgray,dotted,ultra thick] 
    	table[mark=none,x=deltat,y expr ={1.4*\thisrowno{0}*\thisrowno{0} }] {precond-3d-GCC-L2L2ut-order2.dat};  \addlegendentry{$ \mathcal{O}((\Delta t)^2) $ } %
    \addplot[lightgray,dashdotted,ultra thick] 
    	table[mark=none,x=deltat,y expr ={.5*\thisrowno{0}*\thisrowno{0}*\thisrowno{0}}] {precond-3d-GCC-L2L2ut-order3.dat};  \addlegendentry{$ \mathcal{O}((\Delta t)^3) $ } %
    %\legend{ $q=k=1$, $q=k=2$,$q=k=3$ } 
    %\node[draw,circle,ultra thick,lightgray] (Z1) at (axis cs:0.0625,6.5e-3) {};
   \end{axis}
    \node at ($(vel) + (-0.0cm,-2.55cm)$) {\ref{grouplegendv}}; 
\end{tiny}
\end{scope}
\end{tikzpicture}
\caption{Convergence plots for the unit cube with data given in  $\omega = \Omega \setminus [1/4,3/4]^3$. Results are given for the `M-l' {\protect \drawSquare} and `DFB'{\protect \drawCircle} methods. The central figure displays the absolute error on the indicated refinement level. }
\label{fig:GCC-3D}
\end{figure}

\begin{table}[h]
\begin{center}
$q=k=1$ \\
\pgfplotstabletypeset[ 
    every head row/.style={
    before row={
    \bottomrule
    	& \multicolumn{3}{c}{\textcolor{red}{DFB}} & \multicolumn{3}{c}{\textcolor{blue}{M-l}}\\
	},
	after row=\toprule,
    },
    columns={N,ndof-tot-DFB,ndof-inv-DFB,DFB-iter,ndof-tot-MTM-lo,ndof-inv-MTM-lo,MTM-lo-iter},
    columns/N/.style={  column name={$N$}, column type/.add={>{ \columncolor[gray]{.9} }}{}  },
    columns/ndof-tot-DFB/.style={  column name={ndof-tot}    },
    columns/ndof-inv-DFB/.style={  column name={ndof-inv}    },
    columns/DFB-iter/.style={  column name={iter} },
    columns/ndof-tot-MTM-lo/.style={  column name={ndof-tot}    },
    columns/ndof-inv-MTM-lo/.style={  column name={ndof-inv}    },
    columns/MTM-lo-iter/.style={  column name={iter} },
    ] {precond-3d-GCC-iters-order1.dat}
\\
$q=k=2$ \\
\pgfplotstabletypeset[ 
    every head row/.style={
    before row={
    \bottomrule
    	& \multicolumn{3}{c}{\textcolor{red}{DFB}} & \multicolumn{3}{c}{\textcolor{blue}{M-l}}\\
	},
	after row=\toprule,
    },
    columns={N,ndof-tot-DFB,ndof-inv-DFB,DFB-iter,ndof-tot-MTM-lo,ndof-inv-MTM-lo,MTM-lo-iter},
    columns/N/.style={  column name={$N$}, column type/.add={>{ \columncolor[gray]{.9} }}{}  },
    columns/ndof-tot-DFB/.style={  column name={ndof-tot}    },
    columns/ndof-inv-DFB/.style={  column name={ndof-inv}    },
    columns/DFB-iter/.style={  column name={iter} },
    columns/ndof-tot-MTM-lo/.style={  column name={ndof-tot}    },
    columns/ndof-inv-MTM-lo/.style={  column name={ndof-inv}    },
    columns/MTM-lo-iter/.style={  column name={iter} },
    ] {precond-3d-GCC-iters-order2.dat}
\\
$q=k=3$ \\
\pgfplotstabletypeset[ 
    every head row/.style={
    before row={
    \bottomrule
    	& \multicolumn{3}{c}{\textcolor{red}{DFB}} & \multicolumn{3}{c}{\textcolor{blue}{M-l}}\\
	},
	after row=\toprule,
    },
    columns={N,ndof-tot-DFB,ndof-inv-DFB,DFB-iter,ndof-tot-MTM-lo,ndof-inv-MTM-lo,MTM-lo-iter},
    columns/N/.style={  column name={$N$}, column type/.add={>{ \columncolor[gray]{.9} }}{}  },
    columns/ndof-tot-DFB/.style={  column name={ndof-tot}    },
    columns/ndof-inv-DFB/.style={  column name={ndof-inv}    },
    columns/DFB-iter/.style={  column name={iter} },
    columns/ndof-tot-MTM-lo/.style={  column name={ndof-tot}    },
    columns/ndof-inv-MTM-lo/.style={  column name={ndof-inv}    },
    columns/MTM-lo-iter/.style={  column name={iter} },
    ] {precond-3d-GCC-iters-order3.dat}
\end{center}
\caption{GMRes iteration numbers for the unit cube with data given in  $\omega = \Omega \setminus [1/4,3/4]^3$ using tolerance $10^{-5}$. The left half of the tables displays the results for the `DFB' method and the right half for the `M-l' method. The column `ndof-tot' gives the total number of degrees of freedom of the entire space-time system for the respective method, whereas the column `ndof-inv' indicates the size of the linear system which needs to be inverted to apply the preconditioner.}
\label{tab:iter-3D-GCC}
\end{table}

The GMRes iteration numbers displayed in \cref{tab:iter-3D-GCC} confirm our observations from the one-dimensional example: The `DFB' method is preferred for  $q=k = 1$, whereas the iteration numbers for the `M-l' method are far lower for $q=k > 1$. Since the degrees of freedom in the linear system which needs to be inverted to apply the preconditioner (column `ndof-inv' in the table) are only marginally larger for the `M-l' method, this is clearly the method of choice for higher order discretizations.  
Even though this preconditioning strategy is not robust with respect to refinement of the discretization, it still allows us to solve a globally-coupled unique continuation problem posed on a four dimensional space-time geometry up to a reasonable level of accuracy. Solving the $d+1$ dimensional space-time systems directly would require to invert a linear system which involves a factor\footnote{The ratio of the columns `ndof-tot' and  `ndof-inv' in \cref{tab:iter-3D-GCC}.} $N$ (respectively $2N$ for the `DFB' method) more degrees of freedom than the linear system to be factorized to apply the preconditioner.

\subsection{Unit cube geometry without GCC}

\begin{figure}[h]
\begin{center}
\begin{tikzpicture}[scale = 1.0]
\begin{scope}[ ]
\begin{tiny}
\begin{axis}[
    height = 4.75cm,
    width = 5.7cm,
    name=displ,
    ymode=log,
    xmode=log,
    ymax = 7e-1,
    axis y line*=left,
    xlabel= { $\Delta t = h$},
    x label style={at={(axis description cs:0.55,+0.21)},anchor=east},
    legend style = { column sep = 10pt, legend columns = 1, legend to name = grouplegendn,},
    title = {  $\norm{ u - \mathcal{L}_{\Delta t} \underline{u}_1 }_{L^{\infty}(0,T;L^2(\Omega))}$ }, 
    title style={at={(0.5,1.0735)},anchor=north},
     legend style={at={(0.5,-0.1)},anchor=north},
	]
    
    \addplot[blue,only marks,mark=square,mark options={scale=1.25},forget plot] 
   	table[x=deltat,y=MTM-lo] {precond-3d-noGCC-LinftyL2u-order1.dat}; %\addlegendentry{$q=1$}%   
    \addplot[blue,very thick] 
   	table[x=deltat,y=MTM-lo] {precond-3d-noGCC-LinftyL2u-order1.dat}; \addlegendentry{$q=1$}%   
    \addplot[red,very thick,forget plot] 
   	table[x=deltat,y=DFB] {precond-3d-noGCC-LinftyL2u-order1.dat};    
     \addplot[red,very thick,only marks, mark=o,mark options={scale=1.25},forget plot] 
   	table[x=deltat,y=DFB] {precond-3d-noGCC-LinftyL2u-order1.dat}; 
    
    \addplot[blue,only marks,mark=square,mark options={scale=1.25},forget plot] 
   	table[x=deltat,y=MTM-lo] {precond-3d-noGCC-LinftyL2u-order2.dat}; %\addlegendentry{$q=2$}%    
    \addplot[blue,very thick,dashed] 
   	table[x=deltat,y=MTM-lo] {precond-3d-noGCC-LinftyL2u-order2.dat}; \addlegendentry{$q=2$}%   
     \addplot[red,very thick,only marks, mark=o,mark options={scale=1.25},forget plot] 
   	table[x=deltat,y=DFB] {precond-3d-noGCC-LinftyL2u-order2.dat}; 
    \addplot[red,very thick,dashed,forget plot] 
   	table[x=deltat,y=DFB] {precond-3d-noGCC-LinftyL2u-order2.dat};   

    \addplot[blue,only marks,mark=square,mark options={scale=1.25},forget plot] 
   	table[x=deltat,y=MTM-lo] {precond-3d-noGCC-LinftyL2u-order3.dat}; %\addlegendentry{$q=3$}%   
    \addplot[blue,very thick,densely dotted] 
   	table[x=deltat,y=MTM-lo] {precond-3d-noGCC-LinftyL2u-order3.dat}; \addlegendentry{$q=3$}%   
    \addplot[red,very thick,only marks, mark=o,mark options={scale=1.25},forget plot] 
   	table[x=deltat,y=DFB] {precond-3d-noGCC-LinftyL2u-order3.dat}; 
    \addplot[red,very thick,densely dotted] 
	table[x=deltat,y=DFB] {precond-3d-noGCC-LinftyL2u-order3.dat}; 

    \addplot[lightgray,dashed,ultra thick] 
    	table[mark=none,x=deltat,y expr ={.9*\thisrowno{0}}] {precond-3d-noGCC-LinftyL2u-order1.dat};  %\addlegendentry{$ \mathcal{O}(\Delta t) $ } %
    \addplot[lightgray,dotted,ultra thick] 
    	table[mark=none,x=deltat,y expr ={.35*\thisrowno{0}*\thisrowno{0} }] {precond-3d-noGCC-LinftyL2u-order2.dat};  %\addlegendentry{$ \mathcal{O}((\Delta t)^2) $ } %
    \addplot[lightgray,dashdotted,ultra thick] 
    	table[mark=none,x=deltat,y expr ={.25*\thisrowno{0}*\thisrowno{0}*\thisrowno{0}}] {precond-3d-noGCC-LinftyL2u-order3.dat};  %\addlegendentry{$ \mathcal{O}((\Delta t)^3) $ } %
    %\legend{ $q=k=1$, $q=k=2$,$q=k=3$ } 
    %\node[draw,circle,ultra thick,lightgray] (Z1) at (axis cs:0.0625,6.5e-3) {};
   \end{axis}
    \node at ($(displ) + (-0.0cm,-2.55cm)$) {\ref{grouplegendn}}; 
\end{tiny}
\end{scope}

 \begin{scope}[xshift=4.5cm]
 %\begin{footnotesize}
 \node (geom) at (1.15,2.25) {\includegraphics[scale =.13]{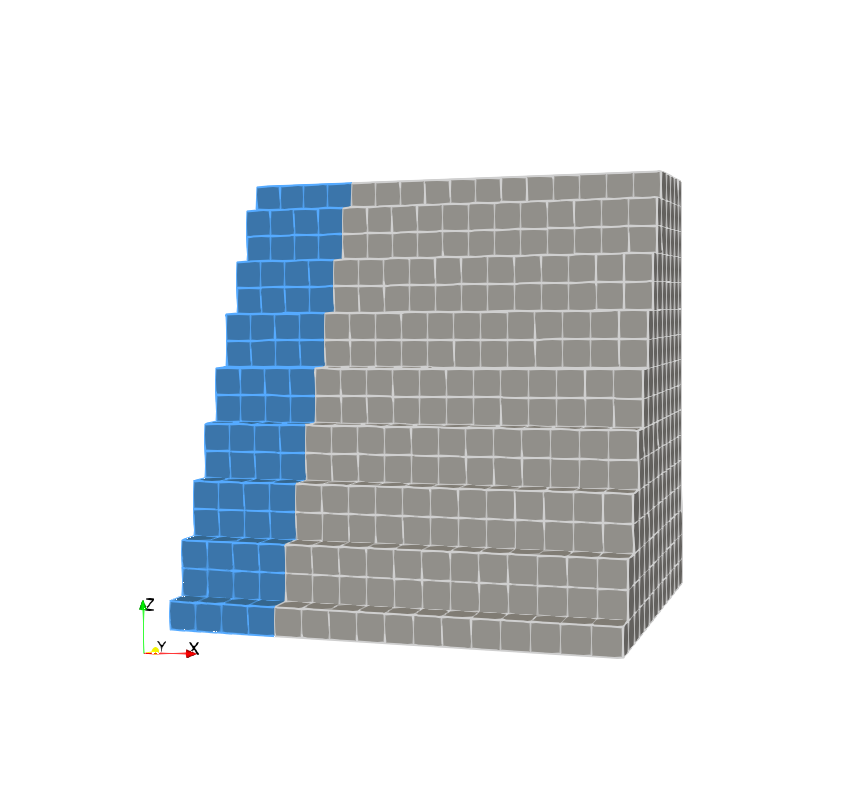}};
 \node[draw,fill=white] (la) at (0.46,3.15) { {\footnotesize $\omega $ }};
  \node[] (Z2) at (-2.25,0.55) {};
  \node (err) at (1.0,-0.4) {\includegraphics[scale =.13]{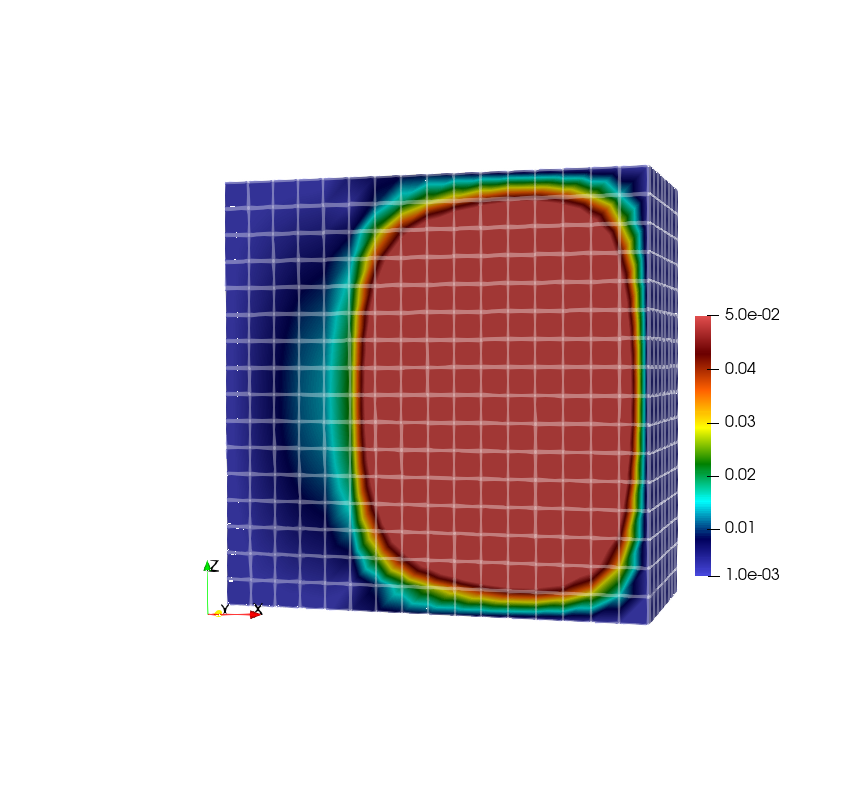}};
 \node[draw,fill=white] (la) at (1.05,0.78) {\tiny{$\vert u(0,\cdot) - \mathcal{L}_{\Delta t} \underline{u}_1(0,\cdot) \vert$}};
 \draw[lightgray,ultra thick, ->] (-0.3,-0.55 ) -- (-3.145, 2.195);
 %\end{footnotesize}
 \end{scope}

\begin{scope}[xshift=7.5cm]
\begin{tiny}
\begin{axis}[ 
    height = 4.75cm,
    width = 5.7cm,
    name=vel,
    ymode=log,
    xmode=log,
    ymax = 1e-0,
    xlabel= { $\Delta t = h$},
    axis y line*=left,
    legend style = { column sep = 10pt, legend columns = 1, legend to name = grouplegendnv,},
    x label style={at={(axis description cs:0.55,+0.21)},anchor=east},
    title = {  $\norm{ \partial_t( u - \mathcal{L}_{\Delta t} \underline{u}_1 ) }_{L^{2}(0,T;L^2(\Omega))}$ },
    legend style={at={(0.5,-0.1)},anchor=north}, 
    title style={at={(0.5,1.0735)},anchor=north},
	]
    
    \addplot[lightgray,dashed,ultra thick] 
    	table[mark=none,x=deltat,y expr ={1.3*\thisrowno{0}}] {precond-3d-noGCC-L2L2ut-order1.dat};  \addlegendentry{$ \mathcal{O}(\Delta t) $ } %
    \addplot[lightgray,dotted,ultra thick] 
    	table[mark=none,x=deltat,y expr ={1.4*\thisrowno{0}*\thisrowno{0} }] {precond-3d-noGCC-L2L2ut-order2.dat};  \addlegendentry{$ \mathcal{O}((\Delta t)^2) $ } %
    \addplot[lightgray,dashdotted,ultra thick] 
    	table[mark=none,x=deltat,y expr ={.5*\thisrowno{0}*\thisrowno{0}*\thisrowno{0}}] {precond-3d-noGCC-L2L2ut-order3.dat};  \addlegendentry{$ \mathcal{O}((\Delta t)^3) $ } %
    
    \addplot[blue,only marks,mark=square,mark options={scale=1.25},forget plot] 
   	table[x=deltat,y=MTM-lo] {precond-3d-noGCC-L2L2ut-order1.dat}; %\addlegendentry{$q=1$}%   
    \addplot[blue,very thick,forget plot] 
   	table[x=deltat,y=MTM-lo] {precond-3d-noGCC-L2L2ut-order1.dat}; \addlegendentry{$q=1$}%   
    \addplot[red,very thick,forget plot] 
   	table[x=deltat,y=DFB,forget plot] {precond-3d-noGCC-L2L2ut-order1.dat};    
     \addplot[red,very thick,only marks, mark=o,mark options={scale=1.25},forget plot] 
   	table[x=deltat,y=DFB] {precond-3d-noGCC-L2L2ut-order1.dat}; 
    
    \addplot[blue,only marks,mark=square,mark options={scale=1.25},forget plot] 
   	table[x=deltat,y=MTM-lo] {precond-3d-noGCC-L2L2ut-order2.dat}; %\addlegendentry{$q=2$}%    
    \addplot[blue,very thick,dashed,forget plot] 
   	table[x=deltat,y=MTM-lo] {precond-3d-noGCC-L2L2ut-order2.dat}; \addlegendentry{$q=2$}%   
     \addplot[red,very thick,only marks, mark=o,mark options={scale=1.25},forget plot] 
   	table[x=deltat,y=DFB] {precond-3d-noGCC-L2L2ut-order2.dat}; 
    \addplot[red,very thick,dashed,forget plot] 
   	table[x=deltat,y=DFB] {precond-3d-noGCC-L2L2ut-order2.dat};   

    \addplot[blue,only marks,mark=square,mark options={scale=1.25},forget plot] 
   	table[x=deltat,y=MTM-lo] {precond-3d-noGCC-L2L2ut-order3.dat}; %\addlegendentry{$q=3$}%   
    \addplot[blue,very thick,densely dotted,forget plot] 
   	table[x=deltat,y=MTM-lo] {precond-3d-noGCC-L2L2ut-order3.dat}; \addlegendentry{$q=3$}%   
    \addplot[red,very thick,only marks, mark=o,mark options={scale=1.25},forget plot] 
   	table[x=deltat,y=DFB] {precond-3d-noGCC-L2L2ut-order3.dat}; 
    \addplot[red,very thick,densely dotted,forget plot] 
	table[x=deltat,y=DFB] {precond-3d-noGCC-L2L2ut-order3.dat}; 
 
    %\legend{ $q=k=1$, $q=k=2$,$q=k=3$ } 
    %\node[draw,circle,ultra thick,lightgray] (Z1) at (axis cs:0.0625,6.5e-3) {};
   \end{axis}
    \node at ($(vel) + (-0.0cm,-2.55cm)$) {\ref{grouplegendnv}}; 
\end{tiny}
\end{scope}
\end{tikzpicture}
\end{center}
\vspace{-2.0em}
\begin{center}
$q=k=2$ \\
\pgfplotstabletypeset[ 
    every head row/.style={
    before row={
    \bottomrule
    	& \multicolumn{3}{c}{\textcolor{red}{DFB}} & \multicolumn{3}{c}{\textcolor{blue}{M-l}}\\
	},
	after row=\toprule,
    },
    columns={N,ndof-tot-DFB,ndof-inv-DFB,DFB-iter,ndof-tot-MTM-lo,ndof-inv-MTM-lo,MTM-lo-iter},
    columns/N/.style={  column name={$N$}, column type/.add={>{ \columncolor[gray]{.9} }}{}  },
    columns/ndof-tot-DFB/.style={  column name={ndof-tot}    },
    columns/ndof-inv-DFB/.style={  column name={ndof-inv}    },
    columns/DFB-iter/.style={  column name={iter} },
    columns/ndof-tot-MTM-lo/.style={  column name={ndof-tot}    },
    columns/ndof-inv-MTM-lo/.style={  column name={ndof-inv}    },
    columns/MTM-lo-iter/.style={  column name={iter} },
    ] {precond-3d-noGCC-iters-order2.dat}
\end{center}

\caption{On top: Convergence plots for the unit cube with data given in merely in $\omega = [0,1/4] \times [0,1]^2$. Results are given for the `M-l' {\protect \drawSquare} and `DFB'{\protect \drawCircle} methods. Bottom: GMRes iteration numbers for $q=k=2$.}
\label{fig:NoGCC-3D}
\end{figure}
Let us change the setup from \cref{ssection:cube-gcc} by removing part of the data. If we set 
$\omega = [0,1/4] \times [0,1]^2$ keeping the final time $T=1/2$, then the GCC, which is crucial for our analysis, will be violated. 
The numerical results shown in \cref{fig:NoGCC-3D} indicate that the error in this case can only be reduced to a certain threshold (depending on the type of discretization) at which stagnation occurs. Interestingly, the accuracies achieved with the `DFB' and `M-l' methods differ significantly. For $q=k \in \{2,3\}$ the latter is about one order of magnitude more accurate than the former. On the other hand, the GMRes iteration numbers for the `M-l' method are more adversely affected by the lack of the GCC than for the `DFM' method.   
In the end, both methods are hampered by the fact that the error for the reconstruction of the wave displacement outside the data domain is fairly large and will not converge to zero (at a reasonable speed) when the discretization is refined as the central plot in \cref{fig:NoGCC-3D} shows. \\
\begin{figure}[h]
\begin{center}
\begin{tikzpicture}[scale = 1.0]
\begin{scope}[ ]
\begin{tiny}
\begin{axis}[
    height = 4.15cm,
    width = 5.45cm,
    name=displ,
    ymode=log,
    xmode=log,
    ymax = 7e-1,
    axis y line*=left,
    xlabel= { $\Delta t = h$},
    x label style={at={(axis description cs:0.75,+0.31)},anchor=east},
    y tick label style={ xshift=2.85em,yshift=0.6em },
    ytick = {1e-3,1e-6},
    legend style = { column sep = 10pt, legend columns = 1, legend to name = grouplegendnoGCC,},
    title = { $ u$ in $ \Omega$ (solid) vs $ B_t $ (dashed) }, 
    title style={at={(0.5,1.0735)},anchor=north},
    legend style={at={(0.5,-0.0)},anchor=north},
    ] 
    
    \addplot[red,very thick,mark=*]
        table[x=deltat,y=LinftyL2u-all] {noGCC-restricted-1d-order1.dat}; \addlegendentry{$q=k=1$}%
    \addplot[blue,very thick,mark=triangle]
        table[x=deltat,y=LinftyL2u-all] {noGCC-restricted-1d-order2.dat};  \addlegendentry{$q=k=2$}%
    \addplot[green!70!black,very thick,mark=x]
        table[x=deltat,y=LinftyL2u-all] {noGCC-restricted-1d-order3.dat};  \addlegendentry{$q=k=3$}%

    \addplot[red,very thick,dashed,forget plot]
        table[x=deltat,y=LinftyL2u-restrict] {noGCC-restricted-1d-order1.dat}; 
    \addplot[blue,very thick,dashed,forget plot]
        table[x=deltat,y=LinftyL2u-restrict] {noGCC-restricted-1d-order2.dat}; 
    \addplot[green!70!black,very thick,dashed,forget plot]
	table[x=deltat,y=LinftyL2u-restrict] {noGCC-restricted-1d-order3.dat}; 

    \addplot[red,only marks,mark=*,forget plot]
        table[x=deltat,y=LinftyL2u-restrict] {noGCC-restricted-1d-order1.dat}; 
    \addplot[blue,only marks,mark=triangle,forget plot]
        table[x=deltat,y=LinftyL2u-restrict] {noGCC-restricted-1d-order2.dat}; 
    \addplot[green!70!black,only marks,mark=x,forget plot]
	table[x=deltat,y=LinftyL2u-restrict] {noGCC-restricted-1d-order3.dat}; 
    
    \addplot[lightgray,dashed,ultra thick,forget plot]
        table[mark=none,x=deltat,y expr ={0.07*\thisrowno{0}}] {noGCC-restricted-1d-order1.dat};  \addlegendentry{$ \mathcal{O}(\Delta t) $ } %
    \addplot[lightgray,dotted,ultra thick,forget plot]
        table[mark=none,x=deltat,y expr ={.04*\thisrowno{0}*\thisrowno{0} }] {noGCC-restricted-1d-order2.dat};  \addlegendentry{$ \mathcal{O}((\Delta t)^2) $ } %
    \addplot[lightgray,dashdotted,ultra thick,forget plot]
        table[mark=none,x=deltat,y expr ={.02*\thisrowno{0}*\thisrowno{0}*\thisrowno{0}}] {noGCC-restricted-1d-order3.dat};  \addlegendentry{$ \mathcal{O}((\Delta t)^3) $ } %

   %\legend{ $q=k=1$, $q=k=2$,$q=k=3$ } 
    \node[draw,circle,ultra thick,lightgray] (Z1) at (axis cs:0.03125,4.5e-6) {};
   \end{axis}
    \node at ($(displ) + (-0.0cm,-2.25cm)$) {\ref{grouplegendnoGCC}}; 
\end{tiny}
\end{scope}

 \begin{scope}[xshift=6.5cm]
\node (abserr) at (0,1) {\includegraphics[scale = 0.3 ]{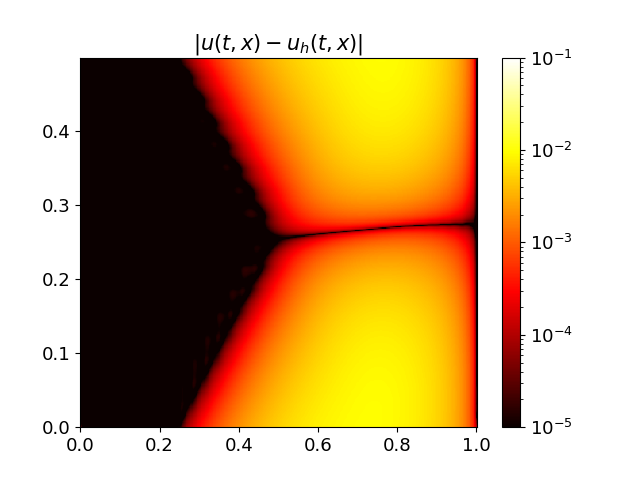}};
\draw[->,ultra thick] (-2.35,-0.95) -- node [right,at end] {$x$}  (1.0,-0.95);
\draw[->,ultra thick] (-2.35,-0.95) -- node [above,at end] {$t$}  (-2.35,2.5);
\draw[white,ultra thick,dashed] (-1.05,-0.5 ) -- (-0.3, 1.0);
\draw[white,ultra thick,dashed] (-0.3,1.0 ) -- (-1.05, 2.4);
\node[] (B) at (-1.3,0.95) {  $\textcolor{white}{B}$ };
\draw[lightgray,ultra thick, ->] (-2.0,1.15 ) -- (-4.8, 0.95);
 \end{scope}

\begin{scope}[xshift=8.65cm]
\begin{tiny}
\begin{axis}[ 
    height = 4.15cm,
    width = 5.45cm,
    name=vel,
    ymode=log,
    xmode=log,
    ymax = 1e-0,
    axis y line*=left,
    xlabel= { $\Delta t = h$},
    ytick = {1e-3,1e-5},
    legend style = { column sep = 10pt, legend columns = 1, legend to name = grouplegendnoGCCv,},
    x label style={at={(axis description cs:0.75,+0.31)},anchor=east},
    y tick label style={ xshift=2.85em,yshift=-0.4em },
    title = { $\partial_t u$ in $ \Omega$ (solid) vs $ B_t $ (dashed) },
    title style={at={(0.5,1.0735)},anchor=north},
    legend style={at={(0.5,-0.1)},anchor=north},
	]
    \addplot[red,very thick,mark=*,forget plot]
        table[x=deltat,y=L2L2ut-all] {noGCC-restricted-1d-order1.dat}; \addlegendentry{$q=k=1$}%
    \addplot[blue,very thick,mark=triangle,forget plot] 
        table[x=deltat,y=L2L2ut-all] {noGCC-restricted-1d-order2.dat};  \addlegendentry{$q=k=2$}%
    \addplot[green!70!black,very thick,mark=x,forget plot]
        table[x=deltat,y=L2L2ut-all] {noGCC-restricted-1d-order3.dat};  \addlegendentry{$q=k=3$}%

    \addplot[red,very thick,mark=*,dashed,forget plot]
        table[x=deltat,y=L2L2ut-restrict] {noGCC-restricted-1d-order1.dat};  %\addlegendentry{$q=k=1$}%
    \addplot[blue,very thick,mark=triangle,dashed,forget plot]
        table[x=deltat,y=L2L2ut-restrict] {noGCC-restricted-1d-order2.dat}; %\addlegendentry{$q=k=2$}%
    \addplot[green!70!black,very thick,mark=x,dashed,forget plot]
        table[x=deltat,y=L2L2ut-restrict] {noGCC-restricted-1d-order3.dat};  %\addlegendentry{$q=k=3$}%

    \addplot[lightgray,dashed,ultra thick]
        table[mark=none,x=deltat,y expr ={1.3*\thisrowno{0}}] {noGCC-restricted-1d-order1.dat};  \addlegendentry{$ \mathcal{O}(\Delta t) $ } %
    \addplot[lightgray,dotted,ultra thick]
        table[mark=none,x=deltat,y expr ={0.4*\thisrowno{0}*\thisrowno{0} }] {noGCC-restricted-1d-order2.dat};  \addlegendentry{$ \mathcal{O}((\Delta t)^2) $ } %
    \addplot[lightgray,dashdotted,ultra thick]
        table[mark=none,x=deltat,y expr ={.1*\thisrowno{0}*\thisrowno{0}*\thisrowno{0}}] {noGCC-restricted-1d-order3.dat};  \addlegendentry{$ \mathcal{O}((\Delta t)^3) $ } %     
    %\legend{ $q=k=1$, $q=k=2$,$q=k=3$ } 
    %\node[draw,circle,ultra thick,lightgray] (Z1) at (axis cs:0.0625,6.5e-3) {};
   \end{axis}
    \node at ($(vel) + (-0.0cm,-2.25cm)$) {\ref{grouplegendnoGCCv}}; 
\end{tiny}
\end{scope}
\end{tikzpicture}
\end{center}
\caption{ Results for $\Omega = [0,1]$ with data given in $[0,1/4]$ computed with the `M-f' method. 
On the left: $\norm{ u - \mathcal{L}_{\Delta t} \underline{u}_1 }_{L^{\infty}(0,T;L^2(\Omega))}$ (solid lines) and $\norm{ u - \mathcal{L}_{\Delta t} \underline{u}_1 }_{L^{\infty}(0,T;L^2(B_t))}$ (dashed lines) for the restricted set $B_t$ defined in \cref{def:Bt}. Similarly on the right for the time derivative in the $L^2-L^2$-norm. }
\label{fig:NoGCC-1D-B}
\end{figure}
To further investigate this phenomenon, let us consider an analogous experiment in one spatial dimension. Here, we consider $\Omega = [0,1]$ with data domain $\omega = [0,1/4]$, final time $T=1/2$ and exact solution $u(t,x) = \cos(\pi t) \sin(\pi x)$. The results shown in \cref{fig:NoGCC-1D-B} are similar to the spatially three-dimensional case. If we measure in the full spatial domain $\Omega$ (solid lines in the plot), then the error appears to stagnate. The method is unable to reconstruct the solution far away from the data domain as the space-time plot of the absolute error in the center of \cref{fig:NoGCC-1D-B} demonstrates. However, if we settle for measuring the spatial error in the sets $B_t$ defined as 
\begin{equation}\label{def:Bt}
B = \cup_{t \in [0,T]} B_t, \quad B_t := \{ x \leq 1/4 + t \text{ for }  t \leq 1/4 \mid x \leq 3/4 - t \text{ for }  t \geq 1/4\},
\end{equation}
then optimal convergence can be observed (see the dashed lines in \cref{fig:NoGCC-1D-B}). It would hence be interesting to investigate whether the Lipschitz stability result from \cref{thm:Lipschitz-stab} could be adapted to cover the considered setting. 

\appendix

\section{Interpolation in space-time}\label{appendix:space-time-interp}

In reference \cite{preussmaster} an interpolation theory for unfitted space-time finite element spaces has been developed. 
Here we recall the results for the fitted case and give some extensions which are 
required for the applications in this paper. The final aim of this appendix is to prove \cref{lem:interp-slabwise} and \cref{lem:approx-fixed-time-lvl}.  

We first define interpolation on the extended time slices 
$\hat{Q}^n = I_n \times \hat{\Omega}$ and $\hat{Q} = I \times \hat{\Omega}$
and then obtain the result on the orignal domains by means of a Sobolev extension argument. 
To this end, let 
\[
\hat{V}_h^k := \{ u \in H^1(\hat{\Omega}) \mid u|_K \in \mathcal{P}_k(K) \; \forall K \in \hat{\mathcal{T}}_h \}, \qquad \FullyDiscrSpaceHat{k}{q} :=  \otimes_{n=0}^{N-1} \mathcal{P}^q(I_n) \otimes \hat{V}_h^k.
\]
We start in \cref{ssection:interp-time} by defining a purely temporal interpolation operator and introduce its purely spatial counterpart in \cref{ssection:interp-in-space}.
In \cref{ssection:space-time-interp} we then concatenate these operators to achieve space-time interpolation. To seprate spatial and temporal continuity, we define $t$-anisotropic Sobolev spaces 
\bel{eq:t-anisotropic-Sobolev-spaces}
H^{q,k}(\hat{Q}) := \left\{ u \mid \partial_t^p D_x^{\alpha}u \in L^2(\hat{Q}), \frac{\abs{\alpha}}{k} + \frac{p}{q} \leq 1  \right\}.   
\ee
For $q=k$ the isotropic Sobolev spaces are retained: $H^{k,k}(\hat{Q}) = H^{k}(\hat{Q})$.  

\subsection{Interpolation in time}\label{ssection:interp-time}
For $q \in \mathbb{N}_0$ define $\pi^{q}_n: H^{0,0}(\hat{Q}^n) \rightarrow \mathcal{P}^q(I_n) \otimes L^2(\hat{\Omega})  $ by 
\begin{equation}\label{eq:Def temporal L2-projection}
(  \pi^{q}_n(u) - u,v)_{\hat{Q}^{n}} = 0 \quad \forall v \in  \mathcal{P}^{q}(I_n) \otimes L^2(\hat{\Omega}).
\end{equation}
On sufficiently smooth functions, $\pi^{q}_n$ coincides with a standard temporal $L^2(I_{n})$-projection $I^{q}_{n}$ defined by 
\begin{equation*}
	(I^{q}_{n} u,\chi)_{I_{n}} = (u,\chi)_{I_{n}} \quad \forall \; \chi \in \mathcal{P}^{q}(I_n).
\end{equation*}

\begin{lemma}\label{Lemma:Temporal interpolation commute with spatial derivatives}
For $u \in C^{0}( \hat{\Omega} ;L^2(I_{n}))$ there holds 
\begin{equation*}
\pi^{q}_n u = I_n^{q}u \text{ in the } L^2(\hat{Q}^{n}) \text{ sense.}
\end{equation*}
Moreover, for $u \in H^{0,1}(\hat{Q}^{n})$ the temporal interpolation commutes with spatial derivatives 
\begin{equation*}
\partial_{x_{j}} \pi^{q}_{n} u = \pi^{q}_{n} \partial_{x_{j}}u, \quad j=1,\ldots,d.
\end{equation*}
\end{lemma}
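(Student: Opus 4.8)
The plan is to exploit the tensor-product structure of the target space $\mathcal{P}^q(I_n)\otimes L^2(\hat{\Omega})$ together with the fact that $\pi^q_n$ acts only in the temporal variable. Since $\mathcal{P}^q(I_n)$ is finite dimensional, every element of this space has the form $\sum_{i=0}^q L_i(\cdot)\,g_i$ with $g_i \in L^2(\hat{\Omega})$, where $\{L_0,\dots,L_q\}$ is a fixed $L^2(I_n)$-orthonormal basis of $\mathcal{P}^q(I_n)$; I will use this representation throughout.

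For the first claim I would define the candidate $w(t,x) := \big(I^q_n u(\cdot,x)\big)(t)$ by applying the temporal $L^2(I_n)$-projection pointwise in the spatial variable. Because $u \in C^0(\hat{\Omega};L^2(I_n))$ and $\mathcal{P}^q(I_n)$ is finite dimensional, the map $x \mapsto I^q_n u(\cdot,x)$ is continuous into $\mathcal{P}^q(I_n)$, hence $w \in \mathcal{P}^q(I_n)\otimes L^2(\hat{\Omega})$. It then remains to check that $w$ satisfies the defining relation \cref{eq:Def temporal L2-projection}. By the representation above it suffices to test against $v = L_i\otimes\psi$ with $\psi \in L^2(\hat{\Omega})$, and Fubini gives
\[
(w-u, L_i\otimes\psi)_{\hat{Q}^n} = \int_{\hat{\Omega}} \psi(x)\Big(\int_{I_n} \big(I^q_n u(\cdot,x)-u(\cdot,x)\big)(t)\,L_i(t)\,\dT\Big)\,\dX,
\]
where the inner integral vanishes for every $x$ by the definition of $I^q_n$. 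Uniqueness of the orthogonal projection onto the closed subspace $\mathcal{P}^q(I_n)\otimes L^2(\hat{\Omega})$ of $L^2(\hat{Q}^n)=H^{0,0}(\hat{Q}^n)$ then forces $w = \pi^q_n u$ in the $L^2(\hat{Q}^n)$ sense.

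For the commutation property I would write $\pi^q_n u(t,x) = \sum_{i=0}^q c_i(x)\,L_i(t)$ with temporal moments $c_i(x) := (u(\cdot,x),L_i)_{I_n}$. Differentiating in $x_j$ gives $\partial_{x_j}\pi^q_n u = \sum_i (\partial_{x_j}c_i)\,L_i$, whereas $\pi^q_n \partial_{x_j} u = \sum_i (\partial_{x_j}u(\cdot,\cdot),L_i)_{I_n}\,L_i$, so the claim reduces to the weak identity $\partial_{x_j}c_i = (\partial_{x_j}u(\cdot,\cdot),L_i)_{I_n}$ on $\hat{\Omega}$, i.e.\ the weak spatial derivative commutes with integration in time against the fixed function $L_i$.

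The main obstacle, and the only genuinely analytic point, is justifying this interchange for $u \in H^{0,1}(\hat{Q}^n)$, that is $u,\partial_{x_j}u \in L^2(\hat{Q}^n)$. I would establish it by testing against $\varphi \in C_c^\infty(\hat{\Omega})$: using Fubini and the definition of the weak derivative $\partial_{x_j}u$,
\[
\int_{\hat{\Omega}} c_i(x)\,\partial_{x_j}\varphi(x)\,\dX = \int_{I_n} L_i(t)\Big(\int_{\hat{\Omega}} u(t,x)\,\partial_{x_j}\varphi(x)\,\dX\Big)\,\dT = -\int_{I_n} L_i(t)\Big(\int_{\hat{\Omega}} \partial_{x_j}u(t,x)\,\varphi(x)\,\dX\Big)\,\dT,
\]
and a further application of Fubini rewrites the right-hand side as $-\int_{\hat{\Omega}} (\partial_{x_j}u(\cdot,x),L_i)_{I_n}\,\varphi(x)\,\dX$. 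This shows $\partial_{x_j}c_i = (\partial_{x_j}u(\cdot,\cdot),L_i)_{I_n}$ weakly and hence the commutation. Throughout I would verify, via Cauchy--Schwarz against the bounded $L_i$, that the moments $c_i$ and $\partial_{x_j}c_i$ are square-integrable on $\hat{\Omega}$, so that all manipulations take place in $L^2(\hat{\Omega})$ and the two expansions agree in $L^2(\hat{Q}^n)$.
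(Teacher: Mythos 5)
Your argument is correct and, in fact, supplies more than the paper does: for this lemma the paper offers no proof at all, deferring to \cite[Lemma 3.14]{preussmaster}, so there is no in-paper argument to compare against. Your route is the natural one and is self-contained: the expansion in an $L^2(I_n)$-orthonormal basis $\{L_i\}$ of $\mathcal{P}^q(I_n)$ reduces both claims to statements about the temporal moments $c_i(x)=(u(\cdot,x),L_i)_{I_n}$, the tensor-product test functions $L_i\otimes\psi$ together with Fubini verify the defining relation \cref{eq:Def temporal L2-projection}, and uniqueness of the orthogonal projection onto the closed subspace $\mathcal{P}^q(I_n)\otimes L^2(\hat{\Omega})$ (closed because $\bigl\|\sum_i L_i g_i\bigr\|_{L^2(\hat{Q}^n)}^2=\sum_i\|g_i\|_{L^2(\hat{\Omega})}^2$) clinches the identification with the pointwise-in-$x$ projection $I^q_n$. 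You also correctly anticipate the only analytic point, namely the square-integrability of $c_i$ and $\partial_{x_j}c_i$ via Cauchy--Schwarz.

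Two small points deserve an explicit line. First, in the commutation part you write $\pi^q_n u=\sum_i c_i L_i$ for $u$ merely in $H^{0,1}(\hat{Q}^n)$; this is the a.e.-in-$x$ extension of your first claim and should be stated as such (it follows by the identical Fubini computation, with $u(\cdot,x)\in L^2(I_n)$ for a.e.\ $x$). Second, the middle equality in your final display integrates by parts in $x$ under the time integral against $L_i\otimes\varphi$, which is \emph{not} compactly supported in time, whereas the weak derivative $\partial_{x_j}u$ is defined via test functions in $C_c^\infty(\hat{Q}^n)$. Since $\partial_{x_j}$ is purely spatial this is harmless, but it needs a sentence: test with $\chi(t)\varphi(x)$, $\chi\in C_c^\infty(I_n)$, to conclude $\int_{\hat{\Omega}}u(t,\cdot)\,\partial_{x_j}\varphi = -\int_{\hat{\Omega}}\partial_{x_j}u(t,\cdot)\,\varphi$ for a.e.\ $t\in I_n$, and only then integrate against $L_i$. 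With those two clarifications the proof is complete.
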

\begin{proof}
See \cite[Lemma 3.14]{preussmaster}. 
\end{proof}
We denote the temporal interpolation error by $\mathcal{L}^{q}_n := \mathrm{id} - \pi^{q}_n $. 
The stability and approximation properties of  $ \pi^{q}_{n}$ are studied below.
\begin{lemma}\label{Lemma:Stability and approximation L^2 projection in time}
The following stability and approximation results hold. 
\begin{enumerate}[label=\emph{(\alph*)}]
\item $\norm{  \pi^{q}_{n} u }_{\hat{Q}^{n}} \leq \norm{u}_{\hat{Q}^{n}} \quad \forall u \in L^2(\hat{Q}^{n})$ for $n=1,\ldots,N-1$.
\item $\norm{ \partial_{t} \pi^{q}_{n}  u}_{\hat{Q}^{n}} \leq C \norm{u}_{H^{1,0}(\hat{Q}^{n})} \quad \forall u \in H^{1,0}(\hat{Q}^{n})$ for $n=1,\ldots,N-1$.
\item For $n=1,\ldots,N-1 $ we have $\norm{\mathcal{L}^{q}_{n} u}_{ \hat{Q}^{n}} \leq C \Delta{t}^{l} \norm{u}_{H^{l,0}(\hat{Q}^{n})} $ $\forall u \in H^{l,0}(\hat{Q}^{n})$ and $ l \in \{0,\ldots,q+1 \}$.
\item For $n=1,\ldots,N-1 $ we have $\norm{ \partial_{t} \mathcal{L}^{q}_{n} u}_{ \hat{Q}^{n}} \leq C \Delta{t}^{l-1} \norm{u}_{H^{l,0}( \hat{Q}^{n})}$ for all $ u \in H^{l,0}(\hat{Q}^{n})$ and $ l \in \{1,\ldots,q+1 \}$.
\end{enumerate}
\end{lemma}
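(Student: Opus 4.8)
The plan is to reduce everything to one-dimensional estimates on the reference interval $\hat I = (0,1)$ and then rescale. Recall that, by the tensor-product structure of $\mathcal{P}^q(I_n)\otimes L^2(\hat\Omega)$, the operator $\pi^q_n$ acts only in the temporal variable: for a.e.\ $x\in\hat\Omega$ the restriction $(\pi^q_n u)(\cdot,x)$ is the $L^2(I_n)$-orthogonal projection of $u(\cdot,x)$ onto $\mathcal{P}^q(I_n)$. Hence all four estimates can be obtained by applying a one-dimensional bound to each temporal fibre $u(\cdot,x)$ and integrating the square over $\hat\Omega$ by Fubini, noting that the anisotropic norm $\norm{\cdot}_{H^{l,0}(\hat Q^n)}$ controls only the temporal derivatives $\partial_t^p u$, $0\leq p\leq l$, and in particular dominates $\norm{\partial_t^l u}_{\hat Q^n}$.

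Claim (a) is immediate, since $\pi^q_n$ is an orthogonal projection in the Hilbert space $L^2(\hat Q^n)$ and therefore a contraction. For (c) and (d) I would pass to $\hat I$ via the affine map $t = t_n + \Delta t\,\hat t$, under which $\pi^q_n$ becomes the fixed reference projection $\hat\pi^q$ onto $\mathcal{P}^q(\hat I)$ and $\mathcal{L}^q_n$ becomes $\hat{\mathcal{L}}^q := \mathrm{id}-\hat\pi^q$. The operators $\hat{\mathcal{L}}^q$ and $\partial_{\hat t}\hat{\mathcal{L}}^q$ are bounded from $H^l(\hat I)$ to $L^2(\hat I)$ and vanish on $\mathcal{P}^q(\hat I)\supseteq\mathcal{P}^{l-1}(\hat I)$ for every $l\leq q+1$; the Bramble--Hilbert lemma therefore yields $\norm{\hat{\mathcal{L}}^q\hat v}_{L^2(\hat I)}\leq C\abs{\hat v}_{H^l(\hat I)}$ for $0\leq l\leq q+1$ and $\norm{\partial_{\hat t}\hat{\mathcal{L}}^q\hat v}_{L^2(\hat I)}\leq C\abs{\hat v}_{H^l(\hat I)}$ for $1\leq l\leq q+1$. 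Tracking the scaling factors $\partial_t=\Delta t^{-1}\partial_{\hat t}$, $\mathrm{d}t = \Delta t\,\mathrm{d}\hat t$ and $\abs{\hat v}_{H^l(\hat I)}^2 = \Delta t^{2l-1}\abs{v}_{H^l(I_n)}^2$ converts these into $\norm{\mathcal{L}^q_n v}_{L^2(I_n)}\leq C\Delta t^l\abs{v}_{H^l(I_n)}$ and $\norm{\partial_t\mathcal{L}^q_n v}_{L^2(I_n)}\leq C\Delta t^{l-1}\abs{v}_{H^l(I_n)}$; integrating over $\hat\Omega$ and bounding the resulting quantity by $\norm{\partial_t^l u}_{\hat Q^n}\leq\norm{u}_{H^{l,0}(\hat Q^n)}$ gives (c) and (d).

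Finally, (b) follows at once from (d) with $l=1$: since $\partial_t\pi^q_n u = \partial_t u - \partial_t\mathcal{L}^q_n u$, we obtain $\norm{\partial_t\pi^q_n u}_{\hat Q^n}\leq\norm{\partial_t u}_{\hat Q^n} + \norm{\partial_t\mathcal{L}^q_n u}_{\hat Q^n}\leq C\norm{u}_{H^{1,0}(\hat Q^n)}$. (Alternatively one may subtract the temporal mean $c(x)$ of $u$, use that $\pi^q_n$ reproduces $t$-constants so that $\partial_t\pi^q_n u = \partial_t\pi^q_n(u-c)$, and combine the reference-interval inverse bound $\norm{\partial_{\hat t}\hat\pi^q\hat v}_{L^2(\hat I)}\leq C\norm{\hat v}_{L^2(\hat I)}$ with the Poincar\'e estimate $\norm{u-c}_{L^2(I_n)}\leq C\Delta t\norm{\partial_t u}_{L^2(I_n)}$.) The only genuine work is the reference-interval Bramble--Hilbert step together with the bookkeeping of the $\Delta t$-powers; the fibre-wise reduction and the affine scaling are routine once the tensor-product action of $\pi^q_n$ and the precise meaning of the anisotropic norm $H^{l,0}$ are made explicit.
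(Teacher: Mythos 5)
Your proof is correct: the fibre-wise reduction (using that the projection onto $\mathcal{P}^q(I_n)\otimes L^2(\hat\Omega)$ acts as the one-dimensional temporal $L^2(I_n)$-projection on a.e.\ fibre), the affine scaling bookkeeping $\abs{\hat v}_{H^l(\hat I)}^2=\Delta t^{2l-1}\abs{v}_{H^l(I_n)}^2$ together with $\partial_t=\Delta t^{-1}\partial_{\hat t}$, and the Bramble--Hilbert step on the reference interval all check out, and the triangle-inequality derivation of (b) from (d) with $l=1$ is non-circular since your (d) is established independently for all $l\in\{1,\ldots,q+1\}$. The route differs from the paper's in an organisational rather than mathematical sense: the paper does not prove the lemma at all but cites \cite[Lemma 3.15]{preussmaster}, and its only original content is patching the two edge cases missing there, namely $l=0$ in (c), deduced from the contraction property (a), and $l=1$ in (d), deduced from the stability bound (b). Your argument, by contrast, is self-contained and handles those edge cases uniformly: $l=0$ in (c) is the degenerate Bramble--Hilbert case (mere $L^2$-boundedness of the orthogonal complement projection $\hat{\mathcal{L}}^q$), and $l=1$ in (d) falls out of Bramble--Hilbert directly, with the logical dependence between (b) and (d) reversed relative to the paper. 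What your approach buys is a complete proof requiring no external reference and making the $\Delta t$-powers transparent; what the paper's citation buys is brevity. One small presentational caveat: when you invoke Bramble--Hilbert for $l=0$ you should say explicitly that $\mathcal{P}^{-1}=\{0\}$ and the estimate reduces to boundedness of $\hat{\mathcal{L}}^q$ on $L^2(\hat I)$ (which holds with constant $1$ since $\mathrm{id}-\hat\pi^q$ is itself an orthogonal projection), and for (d) you should record, as you implicitly do, that $\partial_{\hat t}\hat\pi^q$ is $L^2(\hat I)\to L^2(\hat I)$ bounded by a polynomial inverse inequality, so that $\partial_{\hat t}\hat{\mathcal{L}}^q$ is indeed a bounded linear map $H^l(\hat I)\to L^2(\hat I)$ annihilating $\mathcal{P}^{l-1}(\hat I)$ as Bramble--Hilbert requires.
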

\begin{proof}
The results for $ \Pi^{q}_{n}$ have been shown in \cite[Lemma 3.15]{preussmaster}. 
Note that in this reference the results (c) and (d) are only stated for $ l \in \{1,\ldots,q+1 \}$ and  $ l \in \{2,\ldots,q+1 \}$, respectively.
However, the case $l=0$ in (c) immediately follows from (a) and the case $l=1$ in (d) follows from (b). \par 
\end{proof}

\subsection{Interpolation in space}\label{ssection:interp-in-space}

In this section we consider a generic interval $\tilde{I} \subset (0,T)$ and set $ \tilde{Q} = \tilde{I} \times \hat{\Omega}$. Later, we will choose $\tilde{I} = (0,T)$ to define an interpolation operator into the semi-discrete space $\SemiDiscrSpace{k}$ and $\tilde{I} = I_n$ to define a time-slab-wise interpolation into the fully-discrete space $\FullyDiscrSpace{k}{q}$. 

Define the projector $ \pi^{k} : L^2(\tilde{Q}) \rightarrow L^2(\tilde{I}) \otimes  \hat{V}_{h}^k $, so that for $u \in L^2(\tilde{Q})$
\begin{equation}
(\pi^{k}(u) -u,v_{h})_{\tilde{Q}} = 0, \quad \text{ for all } v_{h} \in L^2(\tilde{I}) \otimes  \hat{V}_{h}^k. 
\end{equation}
On sufficiently smooth functions $\pi^{k}$ coincides with a spatial $L^2( \hat{\Omega} )$ projector $I^{k}: L^2( \hat{\Omega} ) \rightarrow \hat{V}_{h}^k$.
\begin{lemma}\label{Lemma:Spatial interpolation commute with temporal derivatives}
Let $I^{k}: L^2( \hat{\Omega})  \rightarrow \hat{V}_{h}^k$ be the spatial $L^2( \hat{\Omega} )$ projection operator. Then there holds for $u \in C^{0}( \tilde{I},L^2( \hat{\Omega} ))$
\begin{equation}
\pi^{k}u = I^{k}u \quad \text{in the } L^2( \tilde{Q} ) \text{ sense.}
\end{equation}
Further, for $u \in H^{1,0}(\tilde{Q})$ there holds: 
\begin{equation}
\pi^{k} \partial_{t} u = \partial_{t} \pi^{k} u.
\end{equation}
\end{lemma}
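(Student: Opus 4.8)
The plan is to mirror the proof of \cref{Lemma:Temporal interpolation commute with spatial derivatives}: since $\pi^k$ acts only in the spatial variable while $\partial_t$ acts only in time, the two operations should be independent, and the first identity $\pi^k u = I^k u$ is precisely the statement that $\pi^k$ is realized pointwise in $t$ by the spatial projector $I^k$. I would therefore establish the pointwise characterization first and then read off the commutation identity from it.

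First I would show $\pi^k u = I^k u$. Setting $w(t,\cdot) := I^k\big(u(t,\cdot)\big)$ pointwise in $t$, the map $t \mapsto w(t,\cdot)$ is continuous, hence measurable, into the finite-dimensional space $\hat{V}_h^k$, and the stability bound $\norm{I^k u(t,\cdot)}_{\hat{\Omega}} \leq \norm{u(t,\cdot)}_{\hat{\Omega}}$ together with $u \in C^0(\tilde{I}, L^2(\hat{\Omega}))$ yields $w \in L^2(\tilde{I}) \otimes \hat{V}_h^k$. Then I would verify the Galerkin orthogonality characterizing $\pi^k$: writing a generic test function as $v_h = \sum_j \psi_j \phi_j$ with $\phi_j \in \hat{V}_h^k$ and $\psi_j \in L^2(\tilde{I})$, Fubini gives
\[
(w - u, v_h)_{\tilde{Q}} = \sum_j \int\limits_{\tilde{I}} \psi_j(t)\, \big( I^k u(t,\cdot) - u(t,\cdot), \phi_j \big)_{\hat{\Omega}} \; \dT = 0,
\]
since each spatial inner product vanishes for every $t$ by definition of $I^k$. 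Uniqueness of the $L^2(\tilde{Q})$-projection then forces $\pi^k u = w = I^k u$.

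For the commutation identity I would fix an $L^2(\hat{\Omega})$-orthonormal basis $\{ \phi_j \}_{j=1}^{M}$ of $\hat{V}_h^k$ and write, for $u \in L^2(\tilde{Q})$, the projection as $\pi^k u = \sum_j c_j \phi_j$ with $c_j(t) := (u(t,\cdot), \phi_j)_{\hat{\Omega}}$; the Cauchy--Schwarz bound $|c_j(t)| \leq \norm{u(t,\cdot)}_{\hat{\Omega}}$ shows $c_j \in L^2(\tilde{I})$. The crux is to show that $\partial_t u \in L^2(\tilde{Q})$ implies that $c_j$ is weakly differentiable with $c_j'(t) = (\partial_t u(t,\cdot), \phi_j)_{\hat{\Omega}}$. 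Since $\phi_j$ does not depend on $t$, I would prove this by testing $c_j$ against $\chi \in C_c^\infty(\tilde{I})$, using Fubini to pull the spatial integral outside the $t$-integral, and invoking weak $t$-differentiability of $u(\cdot,x)$ for a.e.\ $x$. It then follows that
\[
\partial_t \pi^k u = \sum_j c_j' \, \phi_j = \sum_j ( \partial_t u(t,\cdot), \phi_j )_{\hat{\Omega}} \, \phi_j = \pi^k ( \partial_t u ),
\]
which is the assertion.

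The hard part will be the differentiation-under-the-integral step in the second identity, namely transferring weak $t$-differentiability from $u$ to the scalar coefficients $c_j$ and identifying $c_j'$ exactly as $(\partial_t u(t,\cdot), \phi_j)_{\hat{\Omega}}$; this is where the full $H^{1,0}(\tilde{Q})$-regularity is genuinely used. Everything else reduces to the defining orthogonality of the two projectors and elementary linear algebra in the finite-dimensional space $\hat{V}_h^k$. Alternatively one could appeal directly to the spatial analogue of \cite[Lemma 3.14]{preussmaster}, but the self-contained argument above is short enough to include.
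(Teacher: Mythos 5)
Your proof is correct, but it is worth noting that the paper does not actually prove this lemma itself: its ``proof'' is a bare citation to \cite[Lemma 3.3.5]{L_PHD_2015}, so your self-contained argument is a genuine (and welcome) alternative rather than a reproduction. Your route is the natural one and both halves are sound: the first identity follows from checking that the pointwise-in-$t$ spatial projection $w(t,\cdot)=I^k u(t,\cdot)$ lies in $L^2(\tilde{I})\otimes \hat{V}_h^k$ and satisfies the Galerkin orthogonality defining $\pi^k$, whence uniqueness of the $L^2(\tilde{Q})$-projection concludes; the second follows from the coefficient representation $\pi^k u=\sum_j c_j\phi_j$ with $c_j(t)=(u(t,\cdot),\phi_j)_{\hat{\Omega}}$ in an $L^2(\hat{\Omega})$-orthonormal basis of the finite-dimensional space $\hat{V}_h^k$, once one knows $c_j'(t)=(\partial_t u(t,\cdot),\phi_j)_{\hat{\Omega}}$. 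You correctly single out that last step as the only delicate point; one refinement to your plan there: invoking weak $t$-differentiability of $u(\cdot,x)$ along a.e.\ line $x$ is true for $u\in H^{1,0}(\tilde{Q})$ but is itself a nontrivial Fubini-type characterization, and moreover the distributional definition of $\partial_t u$ only admits test functions compactly supported in \emph{both} variables, so you cannot test directly with $\chi(t)\phi_j(x)$. The cleaner fix is a density argument: the identity $(u,\chi'\psi)_{\tilde{Q}}=-(\partial_t u,\chi\psi)_{\tilde{Q}}$ holds for $\chi\in C_c^\infty(\tilde{I})$ and $\psi\in C_c^\infty(\hat{\Omega})$, and both sides are continuous in $\psi$ with respect to the $L^2(\hat{\Omega})$-norm (since $\tilde{I}$ is bounded and $u,\partial_t u\in L^2(\tilde{Q})$), so it extends to $\psi=\phi_j$; this yields $c_j'$ exactly as claimed without any a.e.-line theory. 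With that substitution your argument is complete, and the final chain $\partial_t\pi^k u=\sum_j c_j'\phi_j=\pi^k\partial_t u$ is legitimate because the sum is finite and the $\phi_j$ are $t$-independent.
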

\begin{proof}
See \cite[Lemma 3.3.5]{L_PHD_2015}.
\end{proof}
Let $ \mathcal{L}^{k} = \mathrm{id} - \pi^{k}$ denote the spatial interpolation error. The next lemma shows that the projection into the semi-discrete space is stable and has the expected approximation properties. 

\begin{lemma}\label{Lemma:Stability and error estimates spatial interpolation}
For the projector $\pi^{k}: L^2(\tilde{Q}) \rightarrow L^2(\tilde{I}) \otimes  \hat{V}_{h}^k $ there hold the following stability and approximation results: 
\begin{enumerate}[label=\emph{(\alph*)}]
\item $\norm{\pi^{k}u}_{\tilde{Q}} \leq \norm{u}_{\tilde{Q}} \quad \forall u \in L^2(\tilde{Q})$,
\item $\norm{ \nabla \pi^{k} u}_{ \tilde{Q} } \leq C \norm{ u}_{H^{0,1}(\tilde{Q})} \quad \forall u \in H^{0,1}(\tilde{Q})$,
\item $\norm{\mathcal{L}^{k} u}_{\tilde{Q}} \leq C h^{s} \norm{u}_{H^{0,s}(\tilde{Q})} \quad \forall u \in H^{0,s}(\tilde{Q}), \quad s \in \{1,\ldots,k+1 \}$,
\item $\norm{ \nabla \mathcal{L}^{k} u}_{\tilde{Q}} \leq C h^{s-1} \norm{u}_{H^{s,0}(\tilde{Q})} \quad \forall u \in H^{0,s}(\tilde{Q}), \quad s \in \{1,\ldots,k+1 \}$.
\item Also, for all $ \forall u \in H^{0,s}(\tilde{Q}) \cap C^{0}(\tilde{I},H^2(\hat{\Omega} ))$ and $ s \in \{2,\ldots,k+1 \}$ it holds that
\[ \left( \int\limits_{\tilde{I}} \sum\limits_{K \in \hat{\mathcal{T}}_h } \norm{\mathcal{L}^{k} u}_{ H^2(K) }^2 \; \dT \right)^{1/2} \leq C h^{s-2} \norm{u}_{H^{0,s}(\tilde{Q})}. \]  
\end{enumerate}
\end{lemma}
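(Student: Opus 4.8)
The plan is to reduce every estimate to a purely spatial bound for the $L^2(\hat\Omega)$-projection $I^{k}$ onto $\hat V_h^k$, evaluated pointwise in time and then integrated over $\tilde I$. The tensor-product structure makes this legitimate because by \cref{Lemma:Spatial interpolation commute with temporal derivatives} we have $\pi^{k}u = I^{k}u$ in the $L^2(\tilde Q)$ sense for $u \in C^{0}(\tilde I,L^2(\hat\Omega))$ (and by density in the remaining cases), and $\pi^{k}$ commutes with the spatial derivatives. Statement (a) is then immediate: $\pi^{k}$ is the orthogonal projection in the Hilbert space $L^2(\tilde Q)$ onto the closed subspace $L^2(\tilde I)\otimes\hat V_h^k$, hence a contraction.

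For (b) I would invoke the $H^1(\hat\Omega)$-stability of the spatial $L^2$-projection on the quasi-uniform family $\hat{\mathcal{T}}_h$. Fixing $t$, I write $\nabla I^{k}u = \nabla(I^{k}u - R_h u) + \nabla R_h u$, where $R_h$ is a Scott--Zhang quasi-interpolant. The second term is bounded by $C\norm{u}_{H^1(\hat\Omega)}$ by stability of $R_h$; for the first I use the elementwise inverse inequality $\norm{\nabla v_h}_{\hat\Omega}\le Ch^{-1}\norm{v_h}_{\hat\Omega}$ together with the best-approximation property $\norm{u-I^{k}u}_{\hat\Omega}\le\norm{u-R_h u}_{\hat\Omega}\le Ch\norm{u}_{H^1(\hat\Omega)}$, which gives $\norm{\nabla(I^{k}u - R_h u)}_{\hat\Omega}\le Ch^{-1}\norm{I^{k}u - R_h u}_{\hat\Omega}\le C\norm{u}_{H^1(\hat\Omega)}$. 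Squaring and integrating $\int_{\tilde I}(\cdot)\,\dT$ yields (b) with $H^{0,1}(\tilde Q)=L^2(\tilde I;H^1(\hat\Omega))$ on the right. Parts (c) and (d) follow the same pattern: pointwise in time $\norm{u - I^{k}u}_{\hat\Omega}\le Ch^{s}\norm{u}_{H^{s}(\hat\Omega)}$ is the standard $L^2$-projection error, while $\norm{\nabla(u-I^{k}u)}_{\hat\Omega}\le Ch^{s-1}\norm{u}_{H^{s}(\hat\Omega)}$ results from combining the $H^1$-stability argument of (b) with the best-approximation of $R_h$; integrating over $\tilde I$ produces the stated $H^{0,s}(\tilde Q)$ bounds.

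The main obstacle is (e), because $I^{k}u$ is only piecewise smooth and does not belong to $H^2(\hat\Omega)$ globally, so the elementwise second derivatives must be controlled directly. I would split $\mathcal{L}^{k}u = (u - R_h u) + (R_h u - I^{k}u)$ on each $K\in\hat{\mathcal{T}}_h$. For the consistency part, the standard elementwise Scott--Zhang estimate gives $\norm{u - R_h u}_{H^2(K)}\le Ch^{s-2}\norm{u}_{H^{s}(\omega_K)}$ on the vertex patch $\omega_K$, and summing over $K$ with the finite-overlap property of the patches yields $Ch^{s-2}\norm{u}_{H^{s}(\hat\Omega)}$. For the discrete part, $R_h u - I^{k}u\in\hat V_h^k$ is a polynomial of degree $k$ on each $K$, so the inverse inequality $\norm{D^2 v_h}_{K}\le Ch^{-2}\norm{v_h}_{K}$ applies, whence $\sum_K\norm{D^2(R_h u - I^{k}u)}_{K}^2\le Ch^{-4}\norm{R_h u - I^{k}u}_{\hat\Omega}^2$; since $\norm{R_h u - I^{k}u}_{\hat\Omega}\le\norm{u-R_h u}_{\hat\Omega}+\norm{u-I^{k}u}_{\hat\Omega}\le Ch^{s}\norm{u}_{H^{s}(\hat\Omega)}$, this contributes $Ch^{s-2}\norm{u}_{H^{s}(\hat\Omega)}$ as well. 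Collecting the two contributions pointwise in time, squaring, and integrating $\int_{\tilde I}(\cdot)\,\dT$ gives the claim; the hypothesis $u\in C^{0}(\tilde I,H^2(\hat\Omega))$ is precisely what makes these pointwise-in-time manipulations meaningful before integration.
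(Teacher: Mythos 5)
Your proposal is correct, and for part (e) it follows the same skeleton as the paper's proof: insert an auxiliary interpolant with elementwise $H^2$ accuracy, apply the inverse inequality $\norm{\cdot}_{H^2(K)} \leq C h^{-2}\norm{\cdot}_{L^2(K)}$ to the discrete difference, and control that difference in $L^2$ via the best-approximation property of the $L^2(\hat{\Omega})$-projection, before integrating in time. The difference is your choice of auxiliary operator: you use the Scott--Zhang quasi-interpolant $R_h$ with patchwise estimates and finite overlap, whereas the paper uses the Lagrange interpolant $I^{L}$, which forces it to invoke $d \leq 3$ (so that $H^2(\hat{\Omega}) \hookrightarrow C^0$ makes pointwise evaluation legitimate). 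Your route removes that dimension restriction for the auxiliary step, although the hypothesis $u \in C^{0}(\tilde{I}, H^2(\hat{\Omega}))$ is still what justifies the pointwise-in-time reduction in both arguments. A second difference is scope: the paper proves only (e) and delegates (a)--(d) to \cite[Lemma 3.17]{preussmaster}, while you supply direct proofs; your contraction argument for (a) and the standard quasi-uniform-mesh proof of $H^1$-stability of the $L^2$-projection (Scott--Zhang plus inverse inequality) for (b)--(d) are sound, and your statement of (d) with $\norm{u}_{H^{0,s}(\tilde{Q})}$ on the right is in fact the intended reading of the paper's $H^{s,0}$, which is a typo given the stated hypothesis $u \in H^{0,s}(\tilde{Q})$. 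Two cosmetic remarks: in (e) your inverse-inequality step only addresses the second-derivative part of $\norm{\cdot}_{H^2(K)}$, but the lower-order terms obey the same $h^{-2}$ bound for $h \lesssim 1$, so nothing is lost; and if one only has the Scott--Zhang estimates for Sobolev index $m \leq 1$ at hand, the $m = 2$ elementwise bound follows by exactly the polynomial-plus-inverse-inequality device you already use for the discrete part, so the argument is self-repairing on that point.
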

\begin{proof}
We only show (e) since the results (a)-(d) have already been established in \cite[Lemma 3.17]{preussmaster}.
Note that since $u \in C^{0}(\tilde{I},L^2( \hat{\Omega} ))$ we have $\mathcal{L}^{k}u = u - I^{k} u$. 
Moreover, as $u(t,\cdot) \in H^2(\hat{\Omega} )$ and $d \leq 3$ the standard Lagrange interpolation operator $I^{L}:H^2( \hat{\Omega}  ) \rightarrow \hat{V}_{h}^k $ is well-defined 
and has the approximation property
\bel{eq:Langrange-interp-H2}
\norm{I^{L}u - u}_{H^2(K)} \leq C h^{s-2} \norm{u}_{H^s(K)}, \; \text{for } s \in \{ 2, \ldots, k+1 \}.  
\ee
Then we have 
\[ \norm{\mathcal{L}^{k} u}_{ H^2(K) } \leq \norm{ u -  I^{L}u }_{H^2(K)} + \norm{ I^{L}u - I^{k} u }_{H^2(K)}.   \]
The integrand in the second term is a discrete function to which the usual inverse inequalities can be applied: 
\begin{align*}
\norm{ I^{L}u - I^{k} u }_{H^2(K)} &\leq  h^{-2} C \norm{ I^{L}u - I^{k} u }_{L^2(K)}  \\
& \leq h^{-2} C \left(  \norm{ I^{L}u - u }_{L^2(K)} + \norm{ I^{k} u - u }_{L^2(K)}   \right). 
\end{align*}
Note that $\norm{ I^{k} u - u }_{L^2( \hat{\Omega}  ) } \leq  \norm{ I^{L}u - u }_{L^2( \hat{\Omega}  )}$ by the best approximation property of the 
$L^2(\hat{\Omega} )$-projection.
	Therefore, summing up over $K \in \hat{\mathcal{T}}_h$ yields
\[
h^{-2} \sum\limits_{K \in \hat{\mathcal{T}}_h }  \norm{ I^{k} u - u }_{L^2(K)}^2 = h^{-2} \norm{ I^{k} u - u }_{L^2(\hat{\Omega} )}^2 \leq h^{-2} \norm{ I^{L} u - u }_{L^2(\hat{\Omega} )}^2.
\]
Combining these results it follows that
\begin{align*}
\int\limits_{\tilde{I}} \sum\limits_{K \in \hat{\mathcal{T}}_h } \norm{\mathcal{L}^{k} u}_{ H^2(K) }^2 \; \dT 
&\leq C \int\limits_{ \tilde{I}}  \sum\limits_{K \in \hat{\mathcal{T}}_h  } \left\{  \norm{ I^{L}u - u }_{H^2(K)}^2 + h^{-2} \norm{ I^{L}u - u }_{L^2(K)}^2  \right\} \dT   \\  
& \leq C h^{2(s-2)} \norm{u}_{H^{0,s}(\tilde{Q} )}^2, \; s \in \{2,\ldots,k+1 \}.
\end{align*}
\end{proof}

Using the result established above it is easy to deduce the interpolation results 
required for analysis of the semi-discretization. \par 
\noindent \textbf{Proof of \cref{lem:interp-slabwise} for $ \Pi_h = \Pi_h^k$:}  \par 
\noindent To define an interpolation operator into $\SemiDiscrSpace{k}$, we precompose $\pi^{k}$ defined on $\tilde{I} = (0,T)$ with a continuous (and degree independent) Sobolev extension operator $\mathcal{E}: H^s(\STdom) \rightarrow H^s(\hat{Q})$, see \cite[Theorem 5, Chapter VI]{S70}. That is, let $\Pi_h^k u :=  (\pi^{k} \mathcal{E}u)|_{Q}$. Then by construction  $\Pi_h^k u \in  L^2(\tilde{I}) \otimes  V_{h}^k $. Thanks to \cref{Lemma:Spatial interpolation commute with temporal derivatives} we have for sufficiently regular $u$ that
\[
\partial_t \Pi_h^k u =   (\partial_t \pi^{k} \mathcal{E}u)|_{Q}.
= ( \pi^{k} \partial_t \mathcal{E}u)|_{Q} \in  L^2(\tilde{I}) \otimes  V_{h}^k. 
\]
Hence, $\Pi_h^k u  \in H^1 \left( 0,T; V_h^k \right) =  \SemiDiscrSpace{k}$. 
To apply the interpolation results established in \cref{Lemma:Stability and error estimates spatial interpolation} to $\Pi_h^k$, we note that for any $D \in \{ \mathrm{id},\nabla,\partial_t \}$ 
we have 
\begin{align*}
\norm{D ( \Pi_h^ku -u ) }_{Q} = \norm{D ( \pi^{k} \mathcal{E}u  - \mathcal{E} u ) }_{Q} 
\leq  \norm{D ( \pi^{k} \mathcal{E}u  - \mathcal{E} u ) }_{ \hat{Q}}. 
\end{align*}
Using the interpolation results from \cref{Lemma:Stability and error estimates spatial interpolation} then allows to estimate the last expression by some power of $h$ 
times $C \norm{  \mathcal{E}u }_{ H^s( \hat{Q}) }$ which can be bounded by $C \norm{u }_{ H^s( Q) }$ thanks to continuity of the Sobolev extension operator. Using this argument the interpolation results in \cref{lem:interp-slabwise} follow from \cref{Lemma:Stability and error estimates spatial interpolation}.   

\subsection{Interpolation in space-time}\label{ssection:space-time-interp}
To define an interpolation operator into the fully-discrete spaces $\FullyDiscrSpace{k}{q}$, some additional work is necessary. If we apply the results from \cref{ssection:interp-in-space} on $\tilde{I} = I_n$ we obtain an interpolation operator $\pi^{k}_n$ into $L^2(I_n) \otimes  \hat{V}_{h}^k$ for $n=0,\ldots,N-1$. The space-time interpolation operator $\pi^{q,k}_n$ is then obtained by concatenating the purely spatial and temporal operators introduced in the previous two subsections. We define 
\begin{equation}\label{eq:interp_time_slab}
\pi_{n}^{q,k}: H^{0,0}(\hat{Q}^n) \rightarrow \mathcal{P}^q(I_n) \otimes \hat{V}_h^k, \quad \pi^{q,k}_{n} = \pi^{q}_{n} \circ \pi^{k}_n 
\end{equation}
and obtain the global operator by means of restriction to the time-slabs: 
\begin{equation}\label{eq:Definition space-time interpolation operator_DG}
\Pi^{q,k}_{n}: H^{0,0}(\hat{Q}) \rightarrow \FullyDiscrSpaceHat{q}{k}, \quad \Pi^{q,k}_{n}u =  \pi^{q,k}_{n} u|_{ \hat{Q}^n }.  
\end{equation}
An important ingredient for establishing the approximation results of these operators is that spatial derivatives commute with the temporal interpolation and vice versa (cf. \cref{Lemma:Temporal interpolation commute with spatial derivatives} and \cref{Lemma:Spatial interpolation commute with temporal derivatives}).

\begin{theorem}\label{Thm:Erorror bounds for space-time interpolation on time slab}
For $l_{q},l_{k} \in \mathbb{N}$ define $l_{\max} = \max{\{l_{q},l_{k} \}}$.
The following stability results and approximation error bounds hold:
\begin{enumerate}[label=\emph{(\alph*)}]
\item $ \norm{ \pi^{q,k}_{n}  u}_{L^2(\hat{Q}^{n})} \leq \norm{u}_{L^2(\hat{Q}^{n})} $ and $ \norm{ \pi^{q,k}_{n}  u}_{H^1(\hat{Q}^{n})} \leq C  \norm{u}_{H^1(\hat{Q}^{n})}  $.
\item $\norm{u -  \pi^{q,k}_{n} u}_{\hat{Q}^{n}} \leq C \left( \Delta{t}^{l_{q}}  + h^{l_{k}}  \right) \norm{u}_{H^{l_{\max} }(\hat{Q}^{n})}$, 
for $l_{k} \in \{1,\ldots,k + 1 \}$ and $l_{q} \in \{1,\ldots,q+1 \}$.
\item $\norm{\partial_{t} (u - \pi^{q,k}_{n} u) }_{\hat{Q}^{n}} \leq C \left( \Delta{t}^{l_{q}-1}  + h^{l_{k}}\right) \norm{u}_{H^{l_{\max}+1 }(\hat{Q}^{n})}$, for $l_{k} \in \{1,\ldots,k +1 \}$ and $l_{q} \in \{2,\ldots,q+1 \}$. 
\item $\norm{\nabla{ (u -  \pi^{q,k}_{n} u) }}_{\hat{Q}^{n}} \leq C \left( \Delta{t}^{l_{q}}  + h^{l_{k}-1}  \right) \norm{u}_{H^{l_{\max}+1 }(\hat{Q}^{n})}$, for $l_{k} \in \{2,\ldots,k +1 \}$ and $l_{q} \in \{1,\ldots,q+1 \}$. 
\item For $u \in H^{l_{\max}+2 }(\hat{Q}^{n}) \cap C^{0}(I_{n},H^2(\hat{\Omega} ))$ we have 
\[  \left( \int\limits_{I_n} \sum\limits_{K \in \hat{\mathcal{T}}_h } \norm{ u - \pi^{q,k}_{n} u }_{ H^2(K) }^2 \; \dT \right)^{1/2} \leq C \left( \Delta{t}^{l_{q}}  + h^{l_{k}-2}  \right) \norm{u}_{H^{l_{\max}+2 }(\hat{Q}^{n})}  \]
for $l_{k} \in \{2,\ldots,k +1 \}$ and $l_{q} \in \{1,\ldots,q+1 \}$.
\end{enumerate}
\end{theorem}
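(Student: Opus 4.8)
The plan is to exploit the tensor-product structure $\pi^{q,k}_{n} = \pi^{q}_{n} \circ \pi^{k}_{n}$ and reduce every estimate to the purely temporal bounds of \cref{Lemma:Stability and approximation L^2 projection in time} and the purely spatial bounds of \cref{Lemma:Stability and error estimates spatial interpolation}. The two workhorses are the commutation identities: $\pi^{q}_{n}$ commutes with spatial derivatives (\cref{Lemma:Temporal interpolation commute with spatial derivatives}), i.e.\ $D_x^{\alpha} \pi^{q}_{n} v = \pi^{q}_{n} D_x^{\alpha} v$, whereas $\pi^{k}_{n}$ commutes with $\partial_t$ (\cref{Lemma:Spatial interpolation commute with temporal derivatives}), i.e.\ $\partial_t \pi^{k}_{n} v = \pi^{k}_{n} \partial_t v$. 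For each approximation bound I would insert the intermediate function $\pi^{k}_{n} u$ and split the error via the triangle inequality, in the form $u - \pi^{q}_{n} \pi^{k}_{n} u = (\mathrm{id} - \pi^{q}_{n}) u + \pi^{q}_{n} (u - \pi^{k}_{n} u)$ when spatial derivatives are involved, and in the form $u - \pi^{q}_{n} \pi^{k}_{n} u = (u - \pi^{k}_{n} u) + (\mathrm{id} - \pi^{q}_{n})\pi^{k}_{n} u$ when the time derivative is involved. In each split one summand is a pure temporal projection error while the other is a pure spatial error acted upon by an operator that is $L^2$-stable by \cref{Lemma:Stability and approximation L^2 projection in time}(a) or \cref{Lemma:Stability and error estimates spatial interpolation}(a).

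For the stability statement (a) the $L^2$ bound is immediate from the two contractivity estimates. For the $H^1$ part I would bound $\nabla \pi^{q,k}_{n} u = \pi^{q}_{n} \nabla \pi^{k}_{n} u$ by temporal $L^2$-stability followed by \cref{Lemma:Stability and error estimates spatial interpolation}(b), and bound $\partial_t \pi^{q,k}_{n} u = \partial_t \pi^{q}_{n} \pi^{k}_{n} u$ through \cref{Lemma:Stability and approximation L^2 projection in time}(b) applied to $\pi^{k}_{n} u$, after commuting $\partial_t$ past $\pi^{k}_{n}$ and using spatial $L^2$-stability. The $L^2$ error (b) then follows from the spatial-friendly split: the temporal piece is controlled by \cref{Lemma:Stability and approximation L^2 projection in time}(c) and the spatial piece by temporal $L^2$-stability together with \cref{Lemma:Stability and error estimates spatial interpolation}(c). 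The gradient bound (d) is analogous, writing $\nabla(u - \pi^{q}_{n}\pi^{k}_{n} u) = (\mathrm{id}-\pi^{q}_{n})\nabla u + \pi^{q}_{n} \nabla(u - \pi^{k}_{n} u)$, where commuting $\pi^{q}_{n}$ past $\nabla$ lets me invoke \cref{Lemma:Stability and approximation L^2 projection in time}(c) on $\nabla u$ and \cref{Lemma:Stability and error estimates spatial interpolation}(d) on the spatial error; the element-wise $H^2$ bound (e) is the same argument carried out for $D^{\alpha}$ with $\abs{\alpha}\le 2$ in place of $\nabla$, now using \cref{Lemma:Stability and error estimates spatial interpolation}(e) for the spatial contribution.

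The delicate point, and the main obstacle, is the temporal-derivative bound (c): since $\partial_t$ does \emph{not} commute with $\pi^{q}_{n}$, the spatial-friendly split is useless here. Instead I would use $\partial_t(u - \pi^{q}_{n}\pi^{k}_{n} u) = \partial_t(u - \pi^{k}_{n} u) + \partial_t (\mathrm{id}-\pi^{q}_{n})\pi^{k}_{n} u$. In the first term I commute $\partial_t$ past the spatial projection to obtain $(\mathrm{id} - \pi^{k}_{n})\partial_t u$, estimated by \cref{Lemma:Stability and error estimates spatial interpolation}(c); in the second term I apply \cref{Lemma:Stability and approximation L^2 projection in time}(d), which bounds precisely the time derivative of the temporal projection error, to $\pi^{k}_{n} u$, and then move $\partial_t^{l_q}$ inside $\pi^{k}_{n}$ before invoking spatial $L^2$-stability. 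The remaining bookkeeping is to dominate the anisotropic norms generated in this process — such as $\norm{u}_{H^{l_q,0}(\hat{Q}^n)}$, $\norm{\partial_t u}_{H^{0,l_k}(\hat{Q}^n)}$ and their analogues — by the isotropic norms $\norm{u}_{H^{l_{\max}}(\hat{Q}^n)}$, $\norm{u}_{H^{l_{\max}+1}(\hat{Q}^n)}$ and $\norm{u}_{H^{l_{\max}+2}(\hat{Q}^n)}$ appearing in the statement; this is where the index shifts $+1$ and $+2$ arise, and one must verify that the regularity assumed in each part is enough to justify every commutation step.
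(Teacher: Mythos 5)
Your proposal is correct and takes essentially the same route as the paper: the tensor splitting $u-\pi^{q,k}_{n}u = \mathcal{L}^{q}u + \pi^{q}_{n}\mathcal{L}^{k}u$ (with the alternative split for the time derivative), the two commutation lemmas, and the one-dimensional stability and approximation bounds are exactly the ingredients the paper uses in its written-out proofs of parts (a) and (e). For parts (b)--(d) the paper simply cites \cite[Corollary 3.19]{preussmaster} rather than repeating the argument, and your direct derivations of those parts reproduce the same standard splitting-and-commutation reasoning.
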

\begin{proof}
We only show (a) and (e) as the other results have been established in \cite[Corollary 3.19]{preussmaster}.
\begin{enumerate}
\item[(a)] 
From \cref{Lemma:Stability and approximation L^2 projection in time} (a) and \cref{Lemma:Stability and error estimates spatial interpolation} (a) we obtain
\[ \norm{ \pi^{q,k}_{n}  u}_{\hat{Q}^{n}} = \norm{  \pi^{q}_{n}  \pi^{k}_n  u}_{\hat{Q}^{n}} \leq \norm{  \pi^{k}_n u }_{\hat{Q}^{n}} \leq  \norm{ u }_{\hat{Q}^{n}}.   \]
By using that spatial derivatives commute with the interpolation in time (see \cref{Lemma:Temporal interpolation commute with spatial derivatives}) and applying 
the stability results of \cref{Lemma:Stability and approximation L^2 projection in time} (a) and \cref{Lemma:Stability and error estimates spatial interpolation} (b)
we get
\[
\norm{\nabla \pi^{q,k}_{n}   u}_{\hat{Q}^{n}} =  \norm{  \pi^{q}_{n}  \nabla \pi^{k}_n u}_{\hat{Q}^{n}} \leq \norm{ \nabla \pi^{k}_n u}_{\hat{Q}^{n}} \leq C \norm{ u}_{H^{0,1}(\hat{Q}^{n})}. 
\]
From the stability of the temporal interpolation, \cref{Lemma:Stability and approximation L^2 projection in time} (b), we have 
		\[  \norm{\partial_t \pi^{q,k}_{n} u}_{Q^{n}} = \norm{ \partial_t \pi^{q}_{n}  ( \pi^{k}_n u) }_{\hat{Q}^{n}} \leq C  \norm{  \pi^{k}_n u }_{ H^{1,0}(\hat{Q}^{n}) }.   \]
Since temporal derivatives commute with the spatial interpolation (as shown in \cref{Lemma:Spatial interpolation commute with temporal derivatives}), we have 
\[
	\norm{  \pi^{k}_n u }_{ H^{1,0}(\hat{Q}^{n}) }^2 = \norm{ \pi^{k}_n \partial_t u }_{\hat{Q}_n}^2 + \norm{ \pi^{k}_n u }_{\hat{Q}_n}^2 \leq C  \norm{ u}_{H^{1,0}(\hat{Q}^{n})}^2
\]
by means of  \cref{Lemma:Stability and error estimates spatial interpolation} (a).
\item[(e)]
We utilize the splitting $u- \pi^{q,k}_{n}  u = \mathcal{L}^q u + \pi^{q}_{n} \mathcal{L}^{k} u$. 
Hence, 
\begin{equation*}
\int\limits_{I_n} \sum\limits_{K \in \hat{\mathcal{T}}_h } \norm{ u - \pi^{q,k}_{n} u }_{ H^2(K) }^2 
\leq C  \int\limits_{I_n} \sum\limits_{K \in \mathcal{T}_h^n } \left\{  \norm{ \mathcal{L}^q u }_{ H^2(K) }^2 + \norm{ \pi^{q} \mathcal{L}^{k} u }_{ H^2(K) }^2   \right\}.  
\end{equation*}
Since spatial derivatives commute with $ \mathcal{L}^q $ (see \cref{Lemma:Temporal interpolation commute with spatial derivatives}), we obtain from \cref{Lemma:Stability and approximation L^2 projection in time} (c): 
\[
	\int\limits_{I_n} \sum\limits_{K \in \mathcal{T}_h }  \norm{ \mathcal{L}^q u }_{ H^2(K) }^2 \; \dT \leq C \Delta{t}^{2 l_q} \norm{u}_{H^{l_q,2}(\hat{Q}^{n})}^2 \quad  l_q \in \{0,\ldots,q+1 \}.
\]
For the other term we use stability of the temporal interpolation (see the \cref{Lemma:Stability and approximation L^2 projection in time} (a)) and 
\cref{Lemma:Stability and error estimates spatial interpolation} (e) to obtain 
\begin{align*}
\int\limits_{I_n} \sum\limits_{K \in \hat{\mathcal{T}}_h } \norm{ \pi^{q}_n  \mathcal{L}^{k} u }_{ H^2(K) }^2 \; \dT 
& \leq 
\int\limits_{I_n} \sum\limits_{K \in \hat{\mathcal{T}}_h } \norm{  \mathcal{L}^{k}u }_{ H^2(K) }^2 \; \dT \\
	& \leq 
	C h^{2(l_k-2)} \norm{u}_{H^{0,l_k}(\hat{Q}^{n})}^2
\end{align*}
for $l_k \in \{2,\ldots,k+1 \}$ which yields the desired result. \par 
\end{enumerate}
\end{proof}
To derive a bound on the approximation error at discrete time levels from the previous results, we need an additional discrete inverse inequality in time:
\begin{lemma}[Discrete inverse inequality in time]\label{Lemma:Discrete inverse inequality in time}
Let $M$ be a bounded Lipschitz domain.
There exists a constant $C > 0$ such that
\begin{equation}\label{eq:Discrete inverse inequality in time}
\norm{u(\tau,\cdot) }_{M} \leq C \Delta{t}^{-1/2} \norm{u}_{I_n \times M }
\end{equation}
for all $u \in  \mathcal{P}^{q}(I_n) \otimes L^2(M)$ and $\tau \in \{ t_n,t_{n+1}\}$.
\end{lemma}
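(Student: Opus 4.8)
The plan is to reduce the estimate to a scalar inverse inequality for polynomials in the single variable $t$ by exploiting the tensor-product structure of $\mathcal{P}^q(I_n) \otimes L^2(M)$, and to prove that scalar inequality by a standard scaling-to-a-reference-interval argument. No dual-norm or finite-element machinery is needed here; the whole statement is elementary once the temporal scaling is made explicit.

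First I would treat the purely temporal case: for a scalar polynomial $p \in \mathcal{P}^q(I_n)$ and $\tau \in \{t_n,t_{n+1}\}$ I claim $\abs{p(\tau)} \leq C \Delta t^{-1/2} \norm{p}_{L^2(I_n)}$. To see this, map $I_n$ affinely onto the reference interval $(0,1)$ via $t = t_n + \Delta t\,\hat{t}$ and set $\hat{p}(\hat{t}) := p(t_n + \Delta t\,\hat{t}) \in \mathcal{P}^q((0,1))$. A change of variables gives $\norm{p}_{L^2(I_n)}^2 = \Delta t\,\norm{\hat{p}}_{L^2(0,1)}^2$, while $p(\tau) = \hat{p}(\hat{\tau})$ with $\hat{\tau} \in \{0,1\}$. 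Since $\mathcal{P}^q((0,1))$ is finite-dimensional, the two point-evaluation functionals $\hat{p} \mapsto \hat{p}(0)$ and $\hat{p} \mapsto \hat{p}(1)$ are bounded with respect to the $L^2(0,1)$-norm, so $\abs{\hat{p}(\hat{\tau})} \leq \hat{C}\,\norm{\hat{p}}_{L^2(0,1)}$ with $\hat{C}$ depending only on $q$. Combining the two relations yields $\abs{p(\tau)} = \abs{\hat{p}(\hat{\tau})} \leq \hat{C}\,\Delta t^{-1/2}\norm{p}_{L^2(I_n)}$.

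Next I would lift this to the space-dependent setting. Any $u \in \mathcal{P}^q(I_n) \otimes L^2(M)$ can be written as $u(t,x) = \sum_{j=0}^q \phi_j(t)\,c_j(x)$ with a basis $\{\phi_j\}$ of $\mathcal{P}^q(I_n)$ and coefficients $c_j \in L^2(M)$; in particular, for almost every $x \in M$ the slice $u(\cdot,x)$ lies in $\mathcal{P}^q(I_n)$, and $u(\tau,\cdot) = \sum_j \phi_j(\tau)\,c_j \in L^2(M)$. Applying the scalar estimate pointwise in $x$ gives $\abs{u(\tau,x)} \leq C\,\Delta t^{-1/2}\norm{u(\cdot,x)}_{L^2(I_n)}$ for a.e. $x$. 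Squaring, integrating over $M$, and using Fubini's theorem to recognise $\int_M \norm{u(\cdot,x)}_{L^2(I_n)}^2\,\dX = \norm{u}_{I_n \times M}^2$ then yields $\norm{u(\tau,\cdot)}_M^2 \leq C^2\,\Delta t^{-1}\norm{u}_{I_n \times M}^2$, which is \cref{eq:Discrete inverse inequality in time} after taking square roots.

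There is no genuine obstacle in this proof: the core is a routine scaling and equivalence-of-norms computation, and the constant $C$ depends only on the polynomial degree $q$, since on the reference interval there are only the two admissible evaluation points $0$ and $1$. The single point requiring mild care is the measurability bookkeeping in the second step, namely that the pointwise-in-$x$ inequality may be integrated legitimately; the finite tensor-product representation $u = \sum_j \phi_j \otimes c_j$ makes this transparent and removes any subtlety.
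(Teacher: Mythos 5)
Your proof is correct. The paper does not actually contain an in-line argument for this lemma: its ``proof'' consists of the citation \cite[Lemma 3.20]{preussmaster}, with the remark that the reference treats $\tau = t_n$ and that $\tau = t_{n+1}$ is analogous. Your self-contained argument is the standard route for such anisotropic inverse estimates and fills that gap completely: the affine rescaling $t = t_n + \Delta t\,\hat{t}$ reduces the claim to the boundedness of the endpoint-evaluation functionals on the finite-dimensional space $\mathcal{P}^q((0,1))$, giving $\abs{\hat{p}(\hat{\tau})} \leq \hat{C}(q)\,\norm{\hat{p}}_{L^2(0,1)}$ (one could even make $\hat{C}$ explicit, e.g.\ $\hat{C} = q+1$ via a Legendre expansion, though this is not needed), and the tensor representation $u = \sum_{j=0}^{q} \phi_j \otimes c_j$ together with Fubini legitimately transfers the scalar bound to $L^2(M)$-valued polynomials. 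Two points in your write-up are worth affirming: you handle both endpoints $\tau \in \{t_n, t_{n+1}\}$ symmetrically in a single argument, whereas the cited reference only proves one case, and you correctly observe that the constant depends only on $q$, uniformly in $M$ and $\Delta t$, which is exactly what the applications in the paper (e.g.\ the bound on $\norm{w_{+}^n}_{\Omega}$ in the proof of \cref{thm:error-estimate-fully-discrete}) require.
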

\begin{proof}
See \cite[Lemma 3.20]{preussmaster}. In this reference the case $\tau = t_n$ has been shown, however, the proof for $\tau = t_{n+1}$ is completely analogous.
\end{proof}

\begin{lemma}[Approximation error bounds at fixed time levels]\label{Lemma: space-time interpolation error estimate at fixed time level} 
At the top and bottom of the time slice the following approximation bounds for the traces hold true:
\begin{enumerate}[label=(\alph*)]
\item For $u \in H^{l_{q},0}(\hat{Q}^{n}) \cap H^{0,l_{k}}(\hat{Q}^{n})$ we have  
\begin{equation}\label{eq: space-time interpolation error estimate at fixed time level}
\norm{(u - \pi^{q,k}_{n} u )( \tau, \cdot )}_{\hat{\Omega } } \leq C \left( \Delta{t}^{l_{q}-1/2} \norm{u}_{H^{l_{q},0}(\hat{Q}^{n})} + \Delta{t}^{-1/2} h^{l_{k}} \norm{u}_{H^{0,l_{k}}(\hat{Q}^{n})}  \right), 
\end{equation}
with $\tau \in \{ t_n,t_{n+1}\}$ for $l_{k} \in \{1,\ldots,k_{s} +1 \}$ and $l_{q} \in \{1,\ldots,q+1 \}$, 
\item For $u \in H^{l_{q},0}(\hat{Q}^{n}) \cap H^{0,l_{k}}(\hat{Q}^{n})$ we have 
\begin{align}
& \norm{\nabla (u - \pi^{q,k}_{n} u )( \tau, \cdot )}_{\hat{\Omega} } \label{eq:time_lvl_approx_grad}
\\
& \leq C \left( \Delta{t}^{l_{q}-1/2} \norm{\nabla u}_{H^{l_{q},0}(\hat{Q}^{n})} + \Delta{t}^{-1/2} h^{l_{k}-1} \norm{u}_{H^{0,l_{k}}(\hat{Q}^{n})}  \right), \nonumber
\end{align}
with $\tau \in \{ t_n,t_{n+1}\}$, for $l_{k} \in \{1,\ldots,k_{s} +1 \}$ and $l_{q} \in \{1,\ldots,q+1 \}$.
\end{enumerate}
\end{lemma}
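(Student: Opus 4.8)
The plan is to prove both bounds from the same decomposition that was used in the proof of \cref{Thm:Erorror bounds for space-time interpolation on time slab}(e), namely
\[
u - \pi^{q,k}_{n} u = \mathcal{L}^q_n u + \pi^q_n \mathcal{L}^k u,
\]
with $\mathcal{L}^q_n = \mathrm{id} - \pi^q_n$ and $\mathcal{L}^k = \mathrm{id} - \pi^k$. The key observation driving the whole argument is that the two summands must be handled by different mechanisms at a fixed time level $\tau \in \{t_n, t_{n+1}\}$: the term $\pi^q_n \mathcal{L}^k u$ is a polynomial of degree at most $q$ in time (since $\pi^q_n$ maps into $\mathcal{P}^q(I_n) \otimes L^2(\hat\Omega)$), so its trace is controlled by the discrete inverse inequality \cref{Lemma:Discrete inverse inequality in time}, whereas $\mathcal{L}^q_n u$ is not polynomial in time and instead calls for a scaled trace inequality.

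For the polynomial term I would apply \cref{Lemma:Discrete inverse inequality in time}, followed by the stability of $\pi^q_n$ from \cref{Lemma:Stability and approximation L^2 projection in time}(a) and the spatial approximation bound \cref{Lemma:Stability and error estimates spatial interpolation}(c), giving
\[
\norm{(\pi^q_n \mathcal{L}^k u)(\tau,\cdot)}_{\hat\Omega} \leq C \Delta t^{-1/2} \norm{\pi^q_n \mathcal{L}^k u}_{\hat{Q}^n} \leq C \Delta t^{-1/2} \norm{\mathcal{L}^k u}_{\hat{Q}^n} \leq C \Delta t^{-1/2} h^{l_k} \norm{u}_{H^{0,l_k}(\hat{Q}^n)},
\]
which is precisely the second contribution in (a). For the first term I would use the elementary scaled trace inequality $\norm{g(\tau)}_{\hat\Omega}^2 \leq C(\Delta t^{-1}\norm{g}_{\hat{Q}^n}^2 + \Delta t\,\norm{\partial_t g}_{\hat{Q}^n}^2)$ with $g = \mathcal{L}^q_n u$, and then insert the temporal error estimates \cref{Lemma:Stability and approximation L^2 projection in time}(c) and (d); since these give $\norm{\mathcal{L}^q_n u}_{\hat{Q}^n} \leq C\Delta t^{l_q}\norm{u}_{H^{l_q,0}}$ and $\norm{\partial_t \mathcal{L}^q_n u}_{\hat{Q}^n} \leq C\Delta t^{l_q-1}\norm{u}_{H^{l_q,0}}$, both contributions to the trace inequality collapse to $C\Delta t^{2l_q-1}\norm{u}_{H^{l_q,0}}^2$, yielding the term $C\Delta t^{l_q-1/2}\norm{u}_{H^{l_q,0}}$ claimed in (a).

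For the gradient bound (b) I would repeat the argument after commuting the spatial gradient through the temporal operators. Using \cref{Lemma:Temporal interpolation commute with spatial derivatives} one has $\nabla \mathcal{L}^q_n u = \mathcal{L}^q_n \nabla u$ and $\nabla \pi^q_n \mathcal{L}^k u = \pi^q_n \nabla \mathcal{L}^k u$, so the polynomial term is again dispatched by \cref{Lemma:Discrete inverse inequality in time} combined with \cref{Lemma:Stability and error estimates spatial interpolation}(d) (producing the factor $h^{l_k-1}$), and the non-polynomial term by the scaled trace inequality applied to $\mathcal{L}^q_n \nabla u$ together with \cref{Lemma:Stability and approximation L^2 projection in time}(c)--(d) applied to $\nabla u$. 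The main obstacle here is bookkeeping rather than conceptual: one must establish the scaled trace inequality in time (a one-dimensional trace estimate on $I_n$ tensorised with $L^2(\hat\Omega)$) and then track the powers of $\Delta t$ carefully, so that the $\Delta t^{-1}$ weight from the trace inequality combines with the $\Delta t^{2l_q}$ and $\Delta t^{2l_q-2}$ from the temporal bounds to give exactly $\Delta t^{l_q-1/2}$. All regularity hypotheses are satisfied within the stated index ranges $l_k \in \{1,\ldots,k+1\}$ and $l_q \in \{1,\ldots,q+1\}$.
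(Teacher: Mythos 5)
Your proposal is correct and follows essentially the same route as the paper: the same splitting $u-\pi^{q,k}_n u = \mathcal{L}^q_n u + \pi^q_n\mathcal{L}^k u$, the discrete inverse inequality (\cref{Lemma:Discrete inverse inequality in time}) plus \cref{Lemma:Stability and error estimates spatial interpolation} for the polynomial-in-time part, and the commutation lemmas for the gradient case. The ``scaled trace inequality'' you invoke is precisely what the paper establishes by mapping $I_n$ to the reference interval and using continuity of the time trace in $H^{1,0}(\tilde{Q})$, so your remaining bookkeeping obligation coincides with the paper's explicit scaling computation.
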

\begin{proof}
We will show (b). The proof of (a) is analogous and can be found in \cite[Lemma 3.21]{preussmaster}. 
Recall the splitting $u- \pi^{q,k}_{n}  u = \mathcal{L}^q u + \pi^{q}_{n} \mathcal{L}^{k} u$. 
An application of the triangle inequality yields 
\[	
\norm{\nabla (u - \pi^{q,k}_{n}  u )( \tau, \cdot )}_{ \hat{\Omega} } 
\leq \norm{ \nabla ( \mathcal{L}^q u  )( \tau, \cdot )}_{ \hat{\Omega} } 
+ \norm{ \nabla ( \pi^{q}_{n} \mathcal{L}^{k} u )( \tau, \cdot )}_{ \hat{\Omega} } 
\]
To treat the second term, we can apply the discrete inverse inequality in time, see the \cref{Lemma:Discrete inverse inequality in time}, which yields  
\begin{align*}
& \norm{ \nabla ( \Pi^{q}_{n} \mathcal{L}^{k} u )( \tau, \cdot )}_{ \hat{\Omega}  } 
= \norm{ \Pi^{q}_{n} \nabla \mathcal{L}^{k} u ( \tau, \cdot )}_{ \hat{\Omega} } \\ 
& \leq C (\Delta t)^{-1/2} \norm{ \nabla \mathcal{L}^{k} u }_{ \hat{Q}^n } 
 \leq C (\Delta t)^{-1/2} h^{ l_{k} -1 } \norm{u}_{H^{0,l_{k}}(\hat{Q}^{n})}  
\end{align*}
for $l_{k} \in \{1,\ldots,k_{s} +1 \}$,  where the spatial approximation results from \cref{Lemma:Stability and error estimates spatial interpolation} have been employed in the last step.
	To treat the term $  \norm{ \nabla ( \mathcal{L}^q u  )( \tau, \cdot )}_{\hat{\Omega}} = \norm{  \mathcal{L}^q g  ( \tau, \cdot )}_{\hat{\Omega}}  $ with $g=\nabla u$, 
we transform the interval $I_n$ to the reference interval $\tilde{I} = (0,1]$ and consider the problem there. 
Denote the corresponding transformation, which only depends on the temporal variable, by $\Phi: \tilde{Q} \rightarrow \hat{Q}^{n}$ with $\tilde{Q} = \tilde{I} \times \hat{\Omega} $. 
The transformed function and operator are given by $\tilde{g} = g \circ \Phi$ and $ \tilde{\mathcal{L}}^q = \Phi^{-1} \circ \mathcal{L}^q \circ \Phi $.   
Using continuity of the time trace operator in $H^{1,0}(\tilde{Q})$ and the interpolation estimates from \cref{Lemma:Stability and approximation L^2 projection in time} (c) for $ \mathcal{L}^q$ yields (denoting $\tilde{\tau} = \Phi^{-1}(\tau)$)
\begin{align*}
& \norm{  \mathcal{L}^q g  ( \tau, \cdot )}_{\hat{\Omega}  }^2 
=  \norm{ \tilde{\mathcal{L}}^q  \tilde{g}  ( \tilde{\tau}, \cdot )}_{\hat{\Omega} }^2 
\leq C \norm{ \tilde{\mathcal{L}}^q  \tilde{g} }_{ H^{1,0}(\tilde{Q}) }^2 
= C \left(  \norm{ \tilde{\mathcal{L}}^q  \tilde{g} }_{ \tilde{Q} }^2 + \norm{ \partial_t \tilde{\mathcal{L}}^q  \tilde{g} }_{ \tilde{Q} }^2   \right) \\ 
& \leq C  \left( \frac{1}{ \Delta t }   \norm{ \mathcal{L}^q  g }_{ \hat{Q}^{n} }^2 + \Delta t \norm{ \partial_t \mathcal{L}^q  g }_{ \hat{Q}^{n} }^2   \right) \\
& \leq C \left( \frac{ (\Delta t)^{2  l_{q} } }{ \Delta t } \norm{g}_{ H^{l_{t},0}(\hat{Q}^n) }^2 + \Delta (\Delta t)^{ 2(l_{q} -1  )  } \norm{g}_{ H^{l_{t},0}(\hat{Q}^n) }^2  \right) 
\end{align*}
for $l_{q} \in \{1,\ldots,q+1 \}$. 
Combining these estimates yields the claim.
\end{proof}
To transfer the results of \cref{Thm:Erorror bounds for space-time interpolation on time slab} and \cref{Lemma: space-time interpolation error estimate at fixed time level} from $\hat{Q}^{n}$ to $Q^{n}$ we proceed analogous to the semi-discrete case. \par  
\noindent \textbf{Proof of \cref{lem:interp-slabwise} and \cref{lem:approx-fixed-time-lvl} for $ \Pi_h = \Pi_{h, \Delta t}^{k,q} $} \par 
\noindent As before let $\mathcal{E}: H^s(\STdom) \rightarrow H^s(\hat{Q})$ be a continuous Sobolev etension operator. We define $\Pi_{h, \Delta t}^{k,q}$ time-slab wise by 
\[
\Pi_{h, \Delta t}^{k,q}u|_{Q^n} =   (\pi^{q,k}_{n} \mathcal{E}u)|_{Q^n} \in \mathcal{P}^q(I_n) \otimes V_h^k.
\]
Then for $D \in \{ \mathrm{id}, \partial_t, \nabla, \partial_{x_i} \partial_{x_j} \}$ we have 
\begin{align*}
\norm{D ( \Pi_{h, \Delta t}^{k,q} u -u ) }_{Q^n} = \norm{D ( \pi^{q,k}_{n} \mathcal{E}u  - \mathcal{E} u ) }_{Q^n} 
\leq  \norm{D ( \pi^{q,k}_{n} \mathcal{E}u  - \mathcal{E} u ) }_{ \hat{Q}^n}. 
\end{align*}
Then using \cref{Thm:Erorror bounds for space-time interpolation on time slab} and continuity of the Sobolev extension operator this can be bounded by an appropriate power of $h$ (specified in the theorem) times $C \norm{u}_{H^s(Q^n)}$.
We proceed similarly for \cref{lem:approx-fixed-time-lvl}. With this argument we obtain \cref{lem:interp-slabwise} from \cref{Thm:Erorror bounds for space-time interpolation on time slab} and \cref{lem:approx-fixed-time-lvl} from \cref{Lemma: space-time interpolation error estimate at fixed time level}.

%\section*{Acknowledgments}
%\textcolor{red}{TODO}

\bibliographystyle{siamplain}
\bibliography{references}
\end{document}